\documentclass[12pt]{amsart}

\usepackage[T1]{fontenc}
\usepackage{newtxtext,newtxmath}
\usepackage{microtype}
\usepackage{comment}
\usepackage{latexsym,amscd, graphicx, color, amsthm, bm, amsmath, cancel, enumitem,chessboard, skak,mathtools}  
\usepackage{tikz, tikz-3dplot}
\usepackage[margin=1in]{geometry}
\usepackage{hyperref}
\usepackage[all,cmtip]{xy}

\numberwithin{equation}{section}

\newtheorem{theorem}{Theorem}[section]
\newtheorem{proposition}[theorem]{Proposition}

\newtheorem{lemma}[theorem]{Lemma}
\newtheorem{conjecture}[theorem]{Conjecture}
\newtheorem{observation}[theorem]{Observation}

\newtheorem{problem}[theorem]{Problem}

\newtheorem{definition}[theorem]{Definition}

\theoremstyle{definition}

\newcommand{\Hilb}{{\mathrm{Hilb}}}

\newcommand{\bbbb}{{\mathfrak{b}}}

\newcommand{\symm}{{\mathfrak{S}}}

\newcommand{\Cone}{{\mathrm{Cone}}}

\newcommand{\one}{{\mathbf{1}}}

\newcommand{\AAA}{{\mathcal{A}}}

\newcommand{\EEE}{{\mathcal{E}}}

\newcommand{\BBB}{{\mathcal{B}}}

\newcommand{\sign}{{\mathrm{sign}}}
\newcommand{\lis}{{\mathrm{lis}}}

\newcommand{\Ind}{{\mathrm{Ind}}}

\newcommand{\FFF}{{\mathcal{F}}}

\newcommand{\RRRR}{{\mathcal{R}}}

\newcommand{\III}{{\mathcal{I}}}

\newcommand{\SSS}{{\mathcal{S}}}

\newcommand{\CC}{{\mathbb{C}}}
\newcommand{\QQ}{{\mathbb{Q}}}

\newcommand{\II}{{\mathbf{I}}}
\newcommand{\ZZ}{{\mathbb{Z}}}
\newcommand{\F}{{\mathbb{F}}}

\newcommand{\gr}{{\mathrm{gr}}}

\newcommand{\xx}{{\mathbf{x}}}

\newcommand{\zz}{{\mathbf{z}}}

\newcommand{\ee}{{\mathbf{e}}}

\newcommand{\Frob}{{\mathrm{Frob}}}
\newcommand{\grFrob}{{\mathrm{grFrob}}}

\newcommand{\RRR}{{\mathbf{R}}}

\newcommand{\DDD}{{\mathfrak{D}}}

\newcommand{\Zpoints}{{\mathcal{Z}}}

\newcommand{\bump}{\mathrm{bump}}
\newcommand{\Bump}{\mathrm{Bump}}
\newcommand{\pile}{\mathrm{pile}}

\begin{document}

\title[Derangement permutation matrices and orbit harmonics]
{Derangement permutation matrices and orbit harmonics}

\author[Yupeng Li]{Yupeng Li}
\address{Michigan State University}
\email{yupengli@msu.edu}

\author[Jasper Liu]{Jasper Liu}
\address{University of California, San Diego}
\email{mol008@ucsd.edu}

\author[Brendon Rhoades]{Brendon Rhoades}
\address{University of California, San Diego}
\email{bprhoades@ucsd.edu}

\begin{abstract}
    Let $\xx_{n \times n}$ be an $n \times n$ matrix of variables and let $S = \F[\xx_{n \times n}]$ be the polynomial ring over these variables where $\F$ is a field of characteristic zero. Regard $S$ as the coordinate ring of the affine space $\F^{n \times n}$ of $n \times n$  $\F$-matrices. Let $\DDD_n \subseteq \F^{n \times n}$ be the locus of derangement permutation matrices. We study the orbit harmonics quotient ring $\RRR(\DDD_n) = S/\gr \, \II(\DDD_n)$ where $\gr \, \II(\DDD_n)$ is the associated graded ideal of the vanishing ideal $\II(\DDD_n) \subseteq S$. We give an explicit generating set of $\gr \, \II(\DDD_n),$ relate the Hilbert series of $\RRR(\DDD_n)$ to the Foata transformation and the longest increasing subsequence statistic on $\symm_n$, and give an alternating sum formula for the graded $\symm_n$-character of $\RRR(\DDD_n)$. Our proofs make heavy use of the mapping cone construction of homological algebra.
\end{abstract}

\maketitle

\section{Introduction}
\label{sec:Introduction}

Let $\symm_n$ be the symmetric group on $[n] := \{1,\dots,n\}$. A permutation $w \in \symm_n$ is a {\em derangement} if $w(i) \neq i$ for all $1 \leq i \leq n$. We write $\DDD_n \subseteq \symm_n$ for the set of derangements in $\symm_n$. In previous work, the third author \cite{RhoadesViennot} analyzed the orbit harmonics ring $\RRR(\symm_n)$ corresponding to the locus $\symm_n$ of $n \times n $ permutation matrices. In this paper we do the same for the orbit harmonics ring $\RRR(\DDD_n)$ of the locus of derangement permutation matrices.

Fix a field $\F$ of characteristic zero.
Let $\xx_{n \times n} = (x_{i,j})_{1 \leq i,j \leq n}$ be an $n \times n$ matrix of variables and let $S := \F[\xx_{n \times n}]$ be the polynomial ring over these variables with its standard grading  $\deg(x_{i,j}) = 1$ for all $1 \leq i,j \leq n$.   The product group $\symm_n \times \symm_n$ acts on the matrix $\xx_{n \times n}$ by row and column permutation, viz.
\[
(w,v) \cdot x_{i,j} := x_{w(i), v(j)} \quad \quad (w,v \in \symm_n , \, \, 1 \leq i,j \leq n).
\]
This gives a graded action of $\symm_n \times \symm_n$ on $S$.

An {\em increasing subsequence} of $w \in \symm_n$ is a sequence $1 \leq i_1 < \cdots < i_k \leq n$ of indices such that $w(i_1) < \cdots < w(i_k)$. Write $\lis(w)$ for the length of the longest increasing subsequence of $w$. 
The permutation statistic $\lis(w)$ is closely tied to the {\em Schensted correspondence} \cite{Schensted} and plays an important role in random matrix theory \cite{BDJ}. The third author introduced the following ideal $I_n \subseteq S$ whose algebraic properties are tied to the combinatorics of increasing subsequences.

\begin{definition}
    \label{def:I-ideal-definition}
    {\em (Rhoades \cite{RhoadesViennot})}
    Let $I_n \subseteq S$ be the ideal generated by $\dots$
    \begin{itemize}
        \item all sums $x_{i,1} + \cdots + x_{i,n}$ of variables in row $i$ for $1 \leq i \leq n$,
        \item all sums $x_{1,j} + \cdots + x_{n,j}$ of variables in column $j$ for $1 \leq j \leq n$,
        \item all products $x_{i,j} \cdot x_{i,j'}$ of variables in the same row for $1 \leq i,j,j' \leq n$, and
        \item all products $x_{i,j} \cdot x_{i',j}$ of variables in the same column for $1 \leq i,i',j \leq n$.
    \end{itemize}
\end{definition}

The ideal $I_n \subseteq S$ is  homogeneous so that $S/I_n$ is a graded $\F$-algebra. Furthermore, the generating set of $I_n$ is stable under the action of $\symm_n \times \symm_n$ on $S$ so that $S/I_n$ is a graded $(\symm_n \times \symm_n)$-module.

The link between the quotient ring $S/I_n$ and permutation combinatorics comes through a ubiquitous technique in deformation theory called {\em orbit harmonics}.  Let $\xx_N = \{x_1,\dots,x_N\}$ be a finite variable set and regard the polynomial ring $\F[\xx_N] := \F[x_1,\dots,x_N]$ as the coordinate ring of polynomial functions $\F^N \to \F$. Given a finite locus $\Zpoints \subseteq \F^N$, we write $\II(\Zpoints) \subseteq \F[\xx_N]$ for its vanishing ideal
\[
\II(\Zpoints) := \{ f \in \F[\xx_N] \,:\, f(\zz) = 0 \text{ for all } \zz \in \Zpoints \}.
\]
We write $\gr \, \II(\Zpoints) \subseteq \F[\xx_N]$ for the associated graded ideal of $\II(\Zpoints)$. 

\begin{definition}
    \label{def:orbit-harmonics-quotient}
    The {\em (graded) orbit harmonics  ring} associated to $\Zpoints \subseteq \F^N$ is 
    \[
    \RRR(\Zpoints) := \F[\xx_N]/\gr \, \II(\Zpoints).
    \]
\end{definition}

The quotient $\RRR(\Zpoints)$ is a graded $\F$-algebra. We have an isomorphism 
\begin{equation}
    \label{eq:orbit-harmonics-isomorphism-intro}
    \RRR(\Zpoints) \cong \F[\Zpoints]
\end{equation}
of $\F$-vector spaces where $\RRR(\Zpoints)$ has the additional structure of a graded $\F$-vector space. If the locus $\Zpoints$ is stable under the action of a finite subgroup $G \subseteq GL_N(\F)$, the homogeneous ideal $\gr \, \II(\Zpoints)$ is stable under the induced action of $G$ on $\F[\xx_N]$. In this case, we may regard \eqref{eq:orbit-harmonics-isomorphism-intro} as an isomorphism of ungraded $G$-modules where $\RRR(\Zpoints)$ has the additional structure of a graded $G$-module. In geometric terms, the transition $\Zpoints \leadsto \RRR(\Zpoints)$ corresponds to the flat deformation which linearly deforms the locus $\Zpoints$ to a subscheme of affine space of degree $|\Zpoints|$ supported at the origin, as shown conceptually below.

\begin{center}
 \begin{tikzpicture}[scale = 0.2]
\draw (-4,0) -- (4,0);
\draw (-2,-3.46) -- (2,3.46);
\draw (-2,3.46) -- (2,-3.46);

 \fontsize{5pt}{5pt} \selectfont
\node at (0,2) {$\bullet$};
\node at (0,-2) {$\bullet$};

\node at (-1.73,1) {$\bullet$};
\node at (-1.73,-1) {$\bullet$};
\node at (1.73,-1) {$\bullet$};
\node at (1.73,1) {$\bullet$};

\draw[thick, ->] (6,0) -- (8,0);

\draw (10,0) -- (18,0);
\draw (12,-3.46) -- (16,3.46);
\draw (12,3.46) -- (16,-3.46);

\draw (14,0) circle (15pt);
\draw(14,0) circle (25pt);
\node at (14,0) {$\bullet$};

 \end{tikzpicture}
\end{center}

The orbit harmonics deformation dates back to Kostant \cite{Kostant} in the context of coinvariant rings. It has seen application to Springer fibers \cite{GP}, delta operators \cite{HRS, Griffin}, Donaldson--Thomas theory \cite{RRT}, and Ehrhart theory \cite{RR}. For artfully chosen `combinatorial' loci $\Zpoints$, one expects algebraic properties of $\RRR(\Zpoints)$ to be governed by combinatorial properties of $\Zpoints$.

In this paper we consider the affine space $\F^{n \times n}$ of $n \times n$  matrices over $\F$. The polynomial ring $S = \F[\xx_{n \times n}]$ is the coordinate ring of $\F^{n \times n}$. We have the permutation matrix embedding $\symm_n \subseteq \F^{n \times n}$. The permutation matrix locus $\symm_n$ is stable under the action of the product group $\symm_n \times \symm_n$ by row and column permutation. The orbit harmonics ring $\RRR(\symm_n)$ may be described as follows. We write $V^\lambda$ for the irreducible representation of $\symm_n$ associated to a partition $\lambda \vdash n$.

\begin{theorem}
    \label{thm:permutation-matrix}
    {\em (Rhoades \cite{RhoadesViennot})} We have $I_n = \gr \, \II(\symm_n)$ as ideals in $S$ so that 
    \[
    S/I_n = \RRR(\symm_n).
    \]
    The quotient $\RRR(\symm_n)$ has Hilbert series
    \begin{equation}
        \Hilb(\RRR(\symm_n);q) = \sum_{w \in \symm_n} q^{n - \lis(w)}.
    \end{equation}We have
    $\RRR(\symm_n) \cong \F[\symm_n]$
    as ungraded $(\symm_n \times \symm_n)$-modules and the degree $d$ piece has isomorphism type
    \[
        \RRR(\symm_n)_d \cong \bigoplus_{\substack{\lambda \vdash n \\ \lambda_1 = n-d}} V^\lambda \otimes V^\lambda.
    \]
\end{theorem}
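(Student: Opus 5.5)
We follow the standard orbit harmonics strategy: show $I_n \subseteq \gr \, \II(\symm_n)$, and then show $\dim S/I_n \leq n!$. Since $\dim \RRR(\symm_n) = |\symm_n| = n!$ by \eqref{eq:orbit-harmonics-isomorphism-intro}, the surjection $S/I_n \twoheadrightarrow \RRR(\symm_n)$ is then an isomorphism.

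The containment is direct. The row sum $x_{i,1}+\cdots+x_{i,n}-1$ vanishes on every permutation matrix, and its top degree component is the row sum. The same holds for column sums. The product $x_{i,j}x_{i,j'}$ with $j\neq j'$ vanishes identically on $\symm_n$, and it is its own top component. For $j=j'$, the element $x_{i,j}^2 - x_{i,j}$ vanishes on $0/1$ matrices and has top component $x_{i,j}^2$. Hence every generator of $I_n$ lies in $\gr\,\II(\symm_n)$.

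The upper bound $\dim S/I_n \le n!$ is the main obstacle. One option is a Gröbner/straightening argument: fix a term order and show that the standard monomials of $I_n$ are indexed by an explicit family of size $n!$. The row and column sum relations let us eliminate the last row and column variables. The remaining monomials modulo the products are squarefree monomials $x_{i_1,j_1}\cdots x_{i_d,j_d}$ supported on partial permutation matrices, that is, rook placements in an $(n-1)\times(n-1)$ board. Further leading-term relations, obtained by combining the row and column sums with the products, should cut these down to placements with no "increasing" configuration of a certain kind. One then counts the surviving monomials of degree $d$ via Viennot's shadow line construction: they correspond to permutations with $\lis(w)=n-d$. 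I would attempt the spanning argument by induction on $n$, expanding along the last row, and verify the count with the RSK correspondence. Getting the right leading-term relations is the hard step. As an alternative, one could compute the harmonic space $I_n^\perp$ and exhibit it explicitly, but this seems harder.

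The remaining statements then follow. The Hilbert series is $\sum_w q^{n-\lis(w)}$ because of the degree-graded count in the previous step, i.e. the degree of the standard monomial attached to $w$ equals $n-\lis(w)$. The ungraded isomorphism $\RRR(\symm_n)\cong\F[\symm_n]$ comes from orbit harmonics, and $\F[\symm_n]\cong\bigoplus_\lambda V^\lambda\otimes V^\lambda$ as a $\symm_n\times\symm_n$-module. For the graded piece, I would identify the top-degree-filtration pieces via the Young-diagram ideal: the space of functions on $\symm_n$ spanned by polynomials of degree $\le d$ equals $\bigoplus_{\lambda_1\ge n-d}V^\lambda\otimes V^\lambda$. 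This is a known result on matrix coefficients: degree-$\le d$ polynomials in the $x_{i,j}$ restricted to $\symm_n$ span functions induced from $\symm_{n-d}$ fixing $d$ points. Taking successive quotients and comparing dimensions with RSK, where $\#\{w:\lis(w)=n-d\}=\sum_{\lambda_1=n-d}(f^\lambda)^2$, yields $\RRR(\symm_n)_d\cong\bigoplus_{\lambda_1=n-d}V^\lambda\otimes V^\lambda$.
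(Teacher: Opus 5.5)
Your containment $I_n \subseteq \gr\,\II(\symm_n)$ is correct, and reducing to the bound $\dim_\F S/I_n \leq n!$ is the right strategy. However, that bound is the entire nontrivial content of the statement, and your proposal does not prove it. You never fix a term order. You never exhibit elements of $I_n$ whose leading terms kill the superfluous rook monomials, nor give a straightening rule. You never show that the survivors in degree $d$ number $\#\{w \in \symm_n : \lis(w) = n-d\}$. The sentence ``should cut these down to placements with no increasing configuration of a certain kind'' is a hope, not an argument.

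The step is far from routine. After eliminating the last row and column, the degree-$d$ rook monomials on the $(n-1)\times(n-1)$ board greatly outnumber the target: for $n=4$, $d=3$ there are $6$ of them but only one $w$ with $\lis(w)=1$. The extra relations have to be organized around the right combinatorial model. An example is $\sum_{j \neq j'} x_{i,j}x_{i',j'} \in I_n$ (with $j,j'<n$), obtained by substituting the row sums into $x_{i,n}x_{i',n}$.

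The paper gives no proof of this theorem; it imports it from \cite{RhoadesViennot}. There this step is done by showing that the Viennot shadow monomials, one for each $w \in \symm_n$ and of degree $n-\lis(w)$, form a standard monomial basis of $S/I_n$. That is the missing idea. Without it, neither $I_n = \gr\,\II(\symm_n)$ nor your standard-monomial derivation of the Hilbert series is established.

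Your final paragraph, by contrast, is sound and proves more than you credit it with. $F_d\,\F[\symm_n]$ is spanned by the $x_\SSS$ with $|\SSS| \leq d$, i.e.\ by indicator functions of $d$-cosets. So it is the two-sided ideal of $\F[\symm_n]$ isomorphic to $\bigoplus_{\lambda_1 \geq n-d} V^\lambda \otimes V^\lambda$. The (evidently equivariant) isomorphism $\RRR(\symm_n) \cong \gr\,\F[\symm_n]$ of Lemma~\ref{lem:associated-graded-isomorphism} then gives $\RRR(\symm_n)_d$ directly. Schensted's theorem turns this into $\Hilb(\RRR(\symm_n);q) = \sum_{w \in \symm_n} q^{n-\lis(w)}$ with no standard monomial theory at all.

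So the graded-module and Hilbert-series claims about $\RRR(\symm_n) = S/\gr\,\II(\symm_n)$ survive, modulo citing that representation-theoretic fact. What is genuinely unproved is the presentation $\gr\,\II(\symm_n) = I_n$, equivalently $\dim_\F (S/I_n)_d \leq \sum_{\lambda_1 = n-d} (f^\lambda)^2$ for each $d$. That presentation is precisely what the rest of this paper relies on. Examples are Lemma~\ref{lem:ideal-containment}, Lemma~\ref{lem:s2}, and the identification of $S^{(K)}/I^{(K)}_{n-k}$ with $\RRR(\symm_{n-k})$ in the modules $V_k$.
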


Motivated by Theorem~\ref{thm:permutation-matrix}, various authors \cite{Liu, LMRZ, LZ, OR, Zhu} studied the orbit harmonics rings $\RRR(\Zpoints)$ of other matrix loci $\Zpoints$. In this paper we study the locus $\DDD_n \subseteq \symm_n$ of derangement permutation matrices.
The permutation matrix embedding $\DDD_n \subseteq \symm_n \subseteq \F^{n \times n}$ gives a homogeneous ideal $\gr \, \II(\DDD_n) \subseteq S$ and an orbit harmonics quotient ring 
\[
\RRR(\DDD_n) = S/\gr \, \II(\DDD_n).
\]
Although the locus $\DDD_n$ is not closed under the row and column permuting action of the full product group $\symm_n \times \symm_n$, it is closed under the action of the diagonal subgroup $\symm_n \subseteq \symm_n \times \symm_n$ via $w \cdot v := w v w^{-1}$ for $w \in \symm_n$ and $v \in \DDD_n$.
The ideal $\gr \, \II(\DDD_n) \subseteq S$ is therefore stable under the $\symm_n$-action on $S$ given by
\[
w \cdot x_{i,j} := x_{w(i), w(j)} \quad \quad (w \in \symm_n, \, \, 1 \leq i,j \leq n)
\]
and the quotient $\RRR(\DDD_n)$ is a graded $\symm_n$-module. 

The study of $\RRR(\DDD_n)$ will be inductively  facilitated by a larger family of quotient rings. Let $I$ and $J$ be finite sets and consider the {\em board} $\BBB = I \times J$. A subset $\RRRR \subseteq \BBB$ is a {\em (nonattacking) rook placement} if 
\[
i = i' \quad \Leftrightarrow \quad j = j' \quad \quad \quad \text{for all } (i,j), (i',j') \in \RRRR.
\]
Elements of $\RRRR$ are called {\em rooks}.
An example rook placement $\RRRR = \{(2,2), (3,1), (5,4)\}$ is shown in Cartesian coordinates on the board $\BBB = [5] \times [5]$ on the left of Figure~\ref{fig:rook}.

Let $\BBB = I \times J$ be a board, let $\RRRR \subseteq \BBB$ be a rook placement, and let $(i_0,j_0) \in \RRRR$ be a rook. The {\em deletion} $\RRRR \setminus (i_0,j_0)$ is the rook placement on the same board $\BBB$ obtained by removing the rook $(i_0,j_0)$. The {\em contraction} $\RRRR/(i_0,j_0)$ is the rook placement on the smaller board $\overline{\BBB} = (I \setminus i_0) \times (J \setminus j_0)$ obtained by removing the rook $(i_0,j_0)$ together with its row and column. If $\RRRR \subseteq [5] \times [5]$ is the rook placement on the left of Figure~\ref{fig:rook}, the middle and right of Figure~\ref{fig:rook} show the deletion $\RRRR \setminus (i_0,j_0)$ and contraction $\RRRR / (i_0,j_0)$ for $(i_0,j_0) = (3,1)$.

Given a rook placement $\RRRR \subseteq [n] \times [n]$, we write
\begin{equation}
    \symm_n(\RRRR) := \{ w \in \symm_n \,:\, (i,w(i)) \notin \RRRR \text{ for all } 1 \leq i \leq n \}
\end{equation}
for the set of permutations in $\symm_n$ whose graphs avoid $\RRRR$. For example, we have
\begin{equation}
    \symm_n(\RRRR) = \DDD_n \quad \quad \text{for } \RRRR = \{(1,1), (2,2), \dots, (n,n)\}.
\end{equation}
The orbit harmonics quotient rings $\RRR(\symm_n(\RRRR))$ therefore give a generalization of $\RRR(\DDD_n)$.

The cardinalities $|\symm_n(\RRRR)|$ satisfy a deletion-contraction recursion. Suppose $\RRRR \subseteq [n] \times [n]$ is a rook placement and let $(i_0,j_0) \in \RRRR$ be a rook. We have
\begin{equation}
    \label{eq:deletion-contraction-combinatorial}
    |\symm_n(\RRRR \setminus(i_0,j_0))| = |\symm_n(\RRRR)| + |\symm_{n-1}(\RRRR/(i_0,j_0))|
\end{equation}
where we identify $\RRRR/(i_0,j_0)$ with its order-isomorphic rook placement on $[n-1] \times [n-1]$ to define $\symm_{n-1}(\RRRR/(i_0,j_0)).$ In Theorem~\ref{thm:ses-presentation} we give an algebraic lift of  Equation~\eqref{eq:deletion-contraction-combinatorial} via a short exact sequence.

\begin{figure}
\begin{tikzpicture}[scale=0.5]
  \draw (0,0) grid (5,5);

  \node at (0.5,-0.5) {$1$};
  \node at (1.5,-0.5) {$2$};
  \node at (2.5,-0.5) {$3$};
  \node at (3.5,-0.5) {$4$};
  \node at (4.5,-0.5) {$5$};

  \node at (-0.5,0.5) {$1$};
  \node at (-0.5,1.5) {$2$};
  \node at (-0.5,2.5) {$3$};
  \node at (-0.5,3.5) {$4$};
  \node at (-0.5,4.5) {$5$};

  \node at (1.5,1.5) {\large $\rook$};
  \node at (2.5,0.5) {\large $\rook$};
  \node at (4.5,3.5) {\large $\rook$};

 \draw (10,0) grid (15,5);

 \node at (10.5,-0.5) {$1$};
  \node at (11.5,-0.5) {$2$};
  \node at (12.5,-0.5) {$3$};
  \node at (13.5,-0.5) {$4$};
  \node at (14.5,-0.5) {$5$};

  \node at (9.5,0.5) {$1$};
  \node at (9.5,1.5) {$2$};
  \node at (9.5,2.5) {$3$};
  \node at (9.5,3.5) {$4$};
  \node at (9.5,4.5) {$5$};

  \node at (11.5,1.5) {\large $\rook$};
  \node at (14.5,3.5) {\large $\rook$};

   \draw (20,0) grid (24,4);

   \node at (20.5,-0.5) {$1$};
  \node at (21.5,-0.5) {$2$};
  \node at (22.5,-0.5) {$4$};
  \node at (23.5,-0.5) {$5$};
  
   \node at (19.5,0.5) {$2$};
  \node at (19.5,1.5) {$3$};
  \node at (19.5,2.5) {$4$};
  \node at (19.5,3.5) {$5$};

  \node at (21.5,0.5) {\large $\rook$};
  \node at (23.5,2.5) {\large $\rook$};
\end{tikzpicture}

\caption{A rook placement, a deletion, and a contraction.}
\label{fig:rook}
\end{figure}

The combinatorics of $\RRR(\DDD_n)$ is tied to the {\em Foata transformation} $\Psi: \symm_n \xrightarrow{\,\, \sim \,\,} \symm_n$. Given $w \in \symm_n$, the permutation $\Psi(w)$ is obtained as follows.
\begin{enumerate}
    \item Write $w$ in {\em canonical cycle form}. That is, write each cycle of $w$ starting with its smallest element, and write the cycles of $w$ from left to right in decreasing order of smallest elements.\footnote{It is more standard to write the largest elements of cycles first, and order cycles by increasing order of largest elements. Our convention is chosen for coherence with the increasing (not decreasing) subsequence results in \cite{RhoadesViennot}.}
    \item Erase the parentheses and interpret the result as the one-line notation of $\Psi(w) \in \symm_n$.
\end{enumerate}
For example, if $w = (4,7,5) (2,3,8)(1,6) \in \symm_8$ we have $\Psi(w) = [4,7,5,2,3,8,1,6] \in \symm_8$.  This procedure is reversible and  $\Psi: \symm_n \to \symm_n$ is a bijection. The map $\Psi$ was introduced by Foata \cite{Foata} in his study of permutation statistics. Deferring various definitions to Section~\ref{sec:Background}, our main results are as follows.

\begin{enumerate}
    \item  In Theorem~\ref{thm:ses-presentation} we give an explicit generating set of the defining ideal $\gr \, \II(\symm_n(\RRRR))$ of $\RRR(\symm_n(\RRRR))$ for any rook placement $\RRRR \subseteq [n] \times [n]$. We  have \begin{equation}\gr \, \II(\symm_n(\RRRR)) = I_n + (x_{i,j} \,:\, (i,j) \in \RRRR) \quad \quad \text{ for all } n \geq 2.
    \end{equation}
    \item  We show in Theorem~\ref{thm:hilbert} that the Hilbert series of $\RRR(\DDD_n)$ is given by
    \begin{equation}
        \label{eq:hilbert-intro}
        \Hilb(\RRR(\DDD_n);q) = \sum_{w \in \DDD_n} q^{n - \lis(\Psi(w))}.
    \end{equation}
    \item We calculate the graded $\symm_n$-module structure of $\RRR(\DDD_n)$ in Theorem~\ref{thm:graded-module}. In particular, if $*$ is the Kronecker product on symmetric functions we have
    \begin{equation}
    \label{eq:frobenius-intro}
    \grFrob(\RRR(\DDD_n);q) = (-1)^n \cdot q^{n-1} \cdot e_n + \sum_{k=0}^{n-1} (-q)^k \cdot e_k \cdot \left[ \sum_{\lambda \vdash n-k} q^{n-k-\lambda_1} \cdot (s_\lambda * s_\lambda) \right]
    \end{equation}
    where $s_\lambda$ is a Schur function and $e_k = s_{(1^k)}$ is an elementary symmetric function. The authors do not know a combinatorial proof that this expression is Schur-positive.
\end{enumerate}

A standard inclusion-exclusion argument shows that derangements are counted by
\begin{equation}
    \label{eq:intro-inclusion-exclusion}
    |\DDD_n| = \sum_{k=0}^n (-1)^{k} \cdot \frac{n!}{k!}.
\end{equation}
The alternating expression \eqref{eq:frobenius-intro} is an $\symm_n$-equivariant $q$-analog of \eqref{eq:intro-inclusion-exclusion}. Taking the Hall inner product of \eqref{eq:frobenius-intro} with $e_1^n$ extracts graded dimension, and applying \eqref{eq:hilbert-intro} gives the polynomial identity
\begin{equation}
    \label{eq:alternating-q-identity}
    \sum_{w \in \DDD_n} q^{n - \lis(\Psi(w))} = 
    (-1)^n \cdot q^{n-1} + \sum_{k=0}^{n-1} (-1)^k \cdot  \left[ \sum_{\lambda \vdash n-k} q^{n-\lambda_1} \cdot \binom{n}{k} \cdot(f^\lambda)^2 \right] 
\end{equation}
where $f^\lambda$ is the number of standard Young tableaux of shape $\lambda$. Equation~\eqref{eq:alternating-q-identity} is a $q$-analog of Equation~\eqref{eq:intro-inclusion-exclusion}.

In many orbit harmonics papers, the algebraic structure of $\RRR(\Zpoints)$ is analyzed using Gr\"obner theory. By contrast, our proofs use ideas from homological algebra. The Hilbert series $\Hilb(\RRR(\DDD_n);q)$ and our presentation of $\RRR(\DDD_n)$ are established via a short exact sequence in Theorem~\ref{thm:ses-presentation}. The formula \eqref{eq:frobenius-intro} for graded $\symm_n$-character of $\RRR(\DDD_n)$ arises from an exact sequence
\[
0 \to V_n \to V_{n-1} \to \cdots \to V_0 \to \RRR(\DDD_n) \to 0
\]
of graded $\symm_n$-modules where $V_k$ is built out of copies of $\RRR(\symm_{n-k})$. This  exact sequence comes from the {\em mapping cone} construction of homological algebra.

Our paper is not the first to study derangements algebraically. D\'esarm\'enien--Wachs introduced \cite{DW} and Reiner--Webb studied \cite{RW} a character $\chi_n$ of $\symm_n$ of dimension $|\DDD_n|$ which is related to the bar resolution in homological algebra. The ungraded $\symm_n$-module $\F[\DDD_n]$ coming from the conjugation action of $\symm_n$ on $\DDD_n$ does {\bf not} have character $\chi_n$; see the remarks following Proposition~\ref{prop:ungraded-structure}. It may be interesting to find a `naturally occurring' graded refinement of $\chi_n$, but (see after Proposition~\ref{prop:ungraded-structure}) such a refinement cannot come from an $\symm_n$-stable polynomial ring quotient.

The rest of the paper is organized as follows. In {\bf Section~\ref{sec:Background}} we give background on combinatorics, representation theory, orbit harmonics, and homological algebra. The technical {\bf Section~\ref{sec:EAF}} uses filtrations to establish a certain monomorphism of quotient rings. In {\bf Section~\ref{sec:Hilbert}} we give our presentation of $\RRR(\DDD_n)$ and describe its Hilbert series. {\bf Section~\ref{sec:Module}} calculates the graded $\symm_n$-module structure of $\RRR(\DDD_n)$. We conclude in {\bf Section~\ref{sec:Conclusion}} with a conjectural basis of $\RRR(\DDD_n)$ related to the patience sorting algorithm.

\section{Background}
\label{sec:Background}

\subsection{Combinatorics}
Let $n$ be a nonnegative integer. A {\em partition} of $n$ is a weakly decreasing sequence $\lambda = (\lambda_1 \geq \cdots \geq \lambda_k)$ of positive integers with $\lambda_1 + \cdots+ \lambda_k = n$. We write $\lambda \vdash n$ to indicate that $\lambda$ is a partition of $n$.

A subset $\RRRR \subseteq [n] \times [n]$ is a {\em (nonattacking) rook placement} if for any $(i,j), (i',j') \in \RRRR$ we have
$i = i'$ if and only if $j = j'$.
Elements of $\RRRR$ are sometimes called {\em rooks} and the set $[n] \times [n]$ is sometimes called a {\em board}.

Let $\RRRR \subseteq [n] \times [n]$ be a rook placement and let $(i_0,j_0) \in \RRRR$ be a rook. The {\em deleted} rook placement $\RRRR \setminus (i_0,j_0)$ on the board $[n] \times [n]$ is given by removing the rook $(i_0,j_0)$. The {\em contracted} rook placement is the subset of the smaller board
\[
\RRRR / (i_0,j_0) \subseteq \{1,\dots,\widehat{i_0}, \dots, n \} \times \{1,\dots,\widehat{j_0}, \dots, n \}
\]
(where the hats denote omission) obtained by removing the rook $(i_0,j_0)$ and the row and column containing that rook.

We write $\Lambda = \bigoplus_{n \geq 0} \Lambda_n$ for the graded ring of symmetric functions in an infinite variable set $\xx = (x_1,x_2,\dots)$ over the ground field $\QQ(q)$. The algebra $\Lambda$ is freely generated by the {\em elementary symmetric functions} $\{ e_n \,:\, n \geq 1\}$ where
\begin{equation}
    e_n := \sum_{1 \leq i_1  < \cdots < i_n} x_{i_1} \cdots x_{i_n}
\end{equation}
is the sum of squarefree monomials of degree $n$. Vector space bases of $\Lambda_n$ are indexed by partitions $\lambda \vdash n$. We mostly use the {\em Schur basis} $\{ s_\lambda \,:\, \lambda \vdash n \}$; see e.g. \cite{Sagan} for its definition.

\subsection{\texorpdfstring{$\symm_n$}--representation theory}
Let $n \geq 0$. Irreducible representations of $\symm_n$ over the characteristic zero field $\F$ are indexed by partitions of $n$. If $\lambda \vdash n$ is a partition, we write $V^\lambda$ for the associated $\symm_n$-irreducible. For example, $V^{(n)}$ is the trivial representation $\mathrm{triv}_{\symm_n}$ and $V^{(1^n)}$ is the sign representation $\sign_{\symm_n}$.

Let $V$ be a finite-dimensional $\symm_n$-module. There exist unique multiplicities $c_\lambda \geq 0$ so that $V \cong \bigoplus_{\lambda \vdash n} c_\lambda \cdot V^\lambda$. The {\em Frobenius characteristic} of $V$ is the symmetric function
\begin{equation}
    \Frob(V) := \sum_{\lambda \vdash n} c_\lambda \cdot s_\lambda \in \Lambda_n
\end{equation}
obtained by replacing each irreducible $V^\lambda$ appearing in $V$ with the corresponding Schur function $s_\lambda$. For example, we have $\Frob(\mathrm{triv}_{\symm_n}) = s_{(n)}$ and $\Frob(\sign_{\symm_n}) = s_{(1^n)} = e_n$. More generally, if $V = \bigoplus_{i \geq 0} V_i$ is a graded $\symm_n$-module, the {\em graded Frobenius characteristic} is 
\begin{equation}
    \grFrob(V;q) := \sum_{i \geq 0} \Frob(V_i) \cdot q^i.
\end{equation}

Let $V_1$ be an $\symm_{n_1}$-module and $V_2$ be a $\symm_{n_2}$-module with $n = n_1 + n_2$. The product group $\symm_{n_1} \times \symm_{n_2}$ acts on the tensor product $V_1 \otimes V_2$ by the rule
\[
(w_1, w_2) \cdot (v_1 \otimes v_2) := (w_1 \cdot v_1) \otimes (w_2 \cdot v_2) 
\]
for all $w_1 \in \symm_{n_1}, w_2 \in \symm_{n_2}, v_1 \in V_1,$ and $v_2 \in V_2$. The {\em induction product} $V_1 \circ V_2$ of $V_1$ and $V_2$ is the $\symm_n$-module given by
\begin{equation}
    V_1 \circ V_2 := \Ind_{\symm_{n_1} \times \symm_{n_2}}^{\symm_n} (V_1 \otimes V_2).
\end{equation}
The Frobenius image of $V_1 \circ V_2$ is related to those of $V_1$ and $V_2$ by
\begin{equation}
    \Frob(V_1 \circ V_2) = \Frob(V_1) \cdot \Frob(V_2).
\end{equation}

Now let $V_1$ and $V_2$ be two $\symm_n$-modules. The tensor product $V_1 \otimes V_2$ carries an $\symm_n$-module structure given by
\[
w \cdot (v_1 \otimes v_2) := (w \cdot v_1) \otimes (w \cdot v_2) \quad \quad (w \in \symm_n, \, v_1 \in V_1, \, v_2 \in V_2).
\]
The {\em Kronecker product} on $\Lambda_n$ is the bilinear operation $*: \Lambda_n \times \Lambda_n \to \Lambda_n$ characterized by 
\begin{equation}
    \Frob(V_1 \otimes V_2) = \Frob(V_1) * \Frob(V_2)
\end{equation}
for all $\symm_n$-modules $V_1$ and $V_2$.

\subsection{Orbit harmonics}
 In this section we let $\xx_N := \{x_1,\dots,x_N\}$ be a finite set of variables and let $S := \F[\xx_N]$ be the polynomial ring over these variables equipped with its standard grading induced by $\deg(x_i) = 1$ for $1 \leq i \leq N$. The $\F$-algebra $S$ carries a graded action of $GL_N(\F)$ by linear substitutions.

Let $I \subseteq S$ be a homogeneous ideal. The quotient ring $S/I = \bigoplus_{i \geq 0} (S/I)_i$ is a graded $\F$-vector space with each graded piece $(S/I)_i$ finite-dimensional. The {\em Hilbert series} of $S/I$ is the formal power series
\begin{equation}
    \Hilb(S/I;q) := \sum_{i \geq 0} \dim_\F(S/I)_i \cdot q^i
\end{equation}
where $q$ is a formal variable.

Let $f \in S$ be a nonzero polynomial. We write $\tau(f)$ for the top-degree homogeneous component of $f$. That is, if $f= f_d + \cdots + f_1 + f_0$ where $f_i$ is homogeneous of degree $i$ and $f_d \neq 0$ we have $\tau(f) = f_d$. If $I \subseteq S$ is an ideal, the {\em associated graded ideal} is 
\begin{equation}
    \gr \, I := (\tau(f) \,:\, f \in I , \,\, f \neq 0).
\end{equation}
Observe that $\gr \, I \subseteq S$ is automatically a homogeneous ideal, even when $I$ is not homogeneous.

We identify $S = \F[\xx_N]$ with the coordinate ring of the affine space $\F^N$. Given a finite locus $\Zpoints \subseteq \F^N$, as in the introduction we write 
\begin{equation}
    \II(\Zpoints) := \{ f \in S \,:\, f(\zz) = 0 \text{ for all } \zz \in \Zpoints \}
\end{equation}
for the vanishing ideal of $\Zpoints$ and let
\begin{equation}
    \RRR(\Zpoints) := S / \gr \, \II(\Zpoints)
\end{equation}
be the (graded) {\em orbit harmonics ring} associated to $\Zpoints$.  If $\Zpoints$ is stable under the action of a finite matrix group $G \subseteq GL_N(\F)$, the ideal $\gr \, \II(\Zpoints) \subseteq S$ is stable under the induced graded action of $G$ on $S$, so that $\RRR(\Zpoints)$ is a  graded $G$-module.

\begin{theorem}
    \label{thm:orbit-harmonics} 
    {\em (Fundamental Theorem of Orbit Harmonics)}
    We have an isomorphism of $\F$-vector spaces
    \begin{equation}
    \label{eq:orbit-harmonics-isomorphism}
        \F[\Zpoints] \cong \RRR(\Zpoints)
    \end{equation}
    where $\RRR(\Zpoints)$ has the additional structure of a graded $\F$-vector space. If $\Zpoints$ is stable under the action of a finite matrix group $G \subseteq GL_N(\F)$, we may regard \eqref{eq:orbit-harmonics-isomorphism} as an isomorphism of $G$-modules where $\RRR(\Zpoints)$ has the additional structure of a graded $G$-module. 
\end{theorem}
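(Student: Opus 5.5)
The plan is to pass through the filtered ideal $\II(\Zpoints)$ and its associated graded. For each $d \geq 0$ let $S_{\leq d}$ denote the polynomials of degree at most $d$, and set $\II(\Zpoints)_{\leq d} := \II(\Zpoints) \cap S_{\leq d}$. The first step is the observation that evaluation $S \to \F[\Zpoints]$, $f \mapsto (f(\zz))_{\zz \in \Zpoints}$, is surjective with kernel $\II(\Zpoints)$. Surjectivity holds because $\Zpoints$ is finite: Lagrange-type interpolation produces, for each $\zz \in \Zpoints$, a product of affine linear forms vanishing on $\Zpoints \setminus \{\zz\}$ but not at $\zz$. Hence $S/\II(\Zpoints) \cong \F[\Zpoints]$ as vector spaces, and as $G$-modules when $\Zpoints$ is $G$-stable, since evaluation intertwines the actions.

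The second step compares $S/\II(\Zpoints)$ with $S/\gr\,\II(\Zpoints)$ degree by degree. I will show that for every $d$ the top-degree map $\tau$ induces an isomorphism
\[
\II(\Zpoints)_{\leq d} / \II(\Zpoints)_{\leq d-1} \xrightarrow{\,\sim\,} (\gr\,\II(\Zpoints))_d .
\]
Injectivity is clear, since $f \mapsto$ its degree-$d$ component has kernel $\II(\Zpoints)_{\leq d-1}$. For surjectivity, the main point is that the degree-$d$ piece of $\gr\,\II(\Zpoints)$ is spanned by the top components $\tau(f)$ with $f \in \II(\Zpoints)$ and $\deg f = d$. To see this, take a product $h\,\tau(g)$ with $h$ homogeneous; then $h\,\tau(g) = \tau(hg)$ and $hg \in \II(\Zpoints)$. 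Moreover, sums of such top components of equal degree $d$ are degree-$d$ components of elements of $\II(\Zpoints)_{\leq d}$. This is the step that needs the most care, and I will write it out explicitly.

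The third step is to take the filtration $F_d := S_{\leq d}/\II(\Zpoints)_{\leq d}$ of $S/\II(\Zpoints)$. Its successive quotients are
\[
S_d \big/ (\gr\,\II(\Zpoints))_d = \RRR(\Zpoints)_d ,
\]
where this equality follows from the previous step via the snake lemma, or by a dimension count. Since $\dim S/\II(\Zpoints) = |\Zpoints|$ is finite, the filtration stabilizes. Summing gives $\dim \RRR(\Zpoints) = |\Zpoints|$, which proves the vector space isomorphism \eqref{eq:orbit-harmonics-isomorphism}.

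For the equivariant statement, the group $G$ acts linearly and so preserves degree. Hence each $S_{\leq d}$ and each $\II(\Zpoints)_{\leq d}$ is $G$-stable, the filtration above is a filtration by $G$-modules, and the identifications of the successive quotients are $G$-equivariant. Because $\F$ has characteristic zero and $G$ is finite, Maschke's theorem lets every short exact sequence of $G$-modules split. Therefore
\[
S/\II(\Zpoints) \cong \bigoplus_d F_d/F_{d-1} \cong \bigoplus_d \RRR(\Zpoints)_d = \RRR(\Zpoints)
\]
as $G$-modules, and combining with the first step gives $\F[\Zpoints] \cong \RRR(\Zpoints)$.
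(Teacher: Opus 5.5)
Your proof is correct and complete. The paper quotes this theorem as a standard fact without proof, but your Steps 2--3 are exactly the content of its Lemma~\ref{lem:associated-graded-isomorphism}: the natural surjection $S_d \to F_d/F_{d-1}$ has kernel $(\gr\,\II(\Zpoints))_d$, i.e.\ $S/\gr\,\II(\Zpoints) \cong \gr\,(S/\II(\Zpoints))$. Interpolation plus a Maschke splitting of the degree filtration is the standard way to finish. One caution: for the $G$-equivariant statement, identify $F_d/F_{d-1}$ with $\RRR(\Zpoints)_d$ via the natural map (your snake-lemma route), not the dimension count, which only gives the vector-space statement.
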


When $\Zpoints = \varnothing$ is the empty locus, we have $\RRR(\Zpoints) = \RRR(\varnothing)=0$. If $\Zpoints = \{*\}$ consists of a single point, then $\RRR(\Zpoints) = \RRR(*) = \F$ is the ground field. We allow for our affine space $\F^N$ to satisfy $N = 0$, in which case $S = \F$ and $\RRR(\Zpoints) = 0$ or $\F$ according to whether $\Zpoints \subseteq \F^0 = \{*\}$ is empty or the singleton $\{*\}$. In particular, we have
\begin{equation}
\label{eq:off-by-one}
    \RRR(\symm_0) = \RRR(\symm_1) = \F, \quad \RRR(\DDD_0) = \F, \quad \text{and} \quad \RRR(\DDD_1) = 0
\end{equation}
where $\F = S/(S_{> 0})$. This small $n$ behavior leads to various boundary conditions in our arguments. We record a simple property of orbit harmonics quotients for later use.

\begin{lemma}
    \label{lem:augment-slice}
    Let $\Zpoints \subseteq \F^N$ be a finite locus, let $a \in \F$, and consider the locus $\Zpoints \times a \subseteq \F^N \times \F$. Write $\F[\xx_N] \otimes \F[y]$ for the coordinate ring of $\F^N \times \F$. As ideals in $\F[\xx_N] \otimes \F[y]$ we have
    \[
    \gr \, \II(\Zpoints \times a ) = (\gr \, \II(\Zpoints) \otimes \F[y]) + (\F[\xx_N] \otimes (y))
    \]
    and consequently we have an isomorphism
    \[
    \RRR(\Zpoints) \xrightarrow{\, \sim \, } \RRR(\Zpoints \times a)
    \]
    induced by the map $\F[\xx_N] \to \F[\xx_N] \otimes \F[y]$ given by $f \mapsto f \otimes 1$.
\end{lemma}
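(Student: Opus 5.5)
Write $J := (\gr \, \II(\Zpoints) \otimes \F[y]) + (\F[\xx_N] \otimes (y))$. I will prove the containment $J \subseteq \gr \, \II(\Zpoints \times a)$ directly. Then I will get equality by comparing dimensions, using the Fundamental Theorem of Orbit Harmonics (Theorem~\ref{thm:orbit-harmonics}).

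For the first containment, two kinds of generators must be handled. The polynomial $y - a$ vanishes on $\Zpoints \times a$, and its top-degree component is $y$, so $y \in \gr \, \II(\Zpoints \times a)$. Next, let $f \in \II(\Zpoints)$ be nonzero. Then $f \otimes 1$ vanishes on $\Zpoints \times a$, because its value at $(\zz,a)$ is $f(\zz) = 0$. Its top-degree component is $\tau(f) \otimes 1$. Hence $\tau(f) \otimes 1 \in \gr \, \II(\Zpoints \times a)$. These elements generate $J$ as an ideal, so $J \subseteq \gr \, \II(\Zpoints \times a)$.

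For the reverse containment, compare dimensions. Since $y \in J$, we have
\[
(\F[\xx_N] \otimes \F[y])/J \cong \F[\xx_N]/\gr \, \II(\Zpoints) = \RRR(\Zpoints).
\]
This isomorphism is induced by $f \mapsto f \otimes 1$: the map is surjective modulo $y$, and its kernel is exactly $\gr \, \II(\Zpoints)$. To check the kernel, set $y = 0$; then $J$ becomes $\gr \, \II(\Zpoints)$.

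By Theorem~\ref{thm:orbit-harmonics}, $\dim \RRR(\Zpoints) = |\Zpoints| = |\Zpoints \times a| = \dim \RRR(\Zpoints \times a)$. So we have a surjection
\[
(\F[\xx_N] \otimes \F[y])/J \twoheadrightarrow \RRR(\Zpoints \times a)
\]
between finite-dimensional spaces of equal dimension. It must therefore be an isomorphism, which gives $J = \gr \, \II(\Zpoints \times a)$.

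The isomorphism $\RRR(\Zpoints) \to \RRR(\Zpoints \times a)$ is then the composite of the identification above with this equality. It is indeed induced by $f \mapsto f \otimes 1$.

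The only delicate point is the kernel computation for the quotient by $J$. Any element of $J$ has the form $g + yh$ with $g \in \gr \, \II(\Zpoints) \otimes \F[y]$. Its image under $y \mapsto 0$ lies in $\gr \, \II(\Zpoints)$. So an $f \in \F[\xx_N]$ with $f \otimes 1 \in J$ satisfies $f \in \gr \, \II(\Zpoints)$. The rest of the argument is routine.
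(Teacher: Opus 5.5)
Your proposal is correct and takes essentially the same route as the paper. Both arguments get the containment from $f \otimes 1$ and $y - a$, then force equality by comparing dimensions with $|\Zpoints \times a| = |\Zpoints|$. The only cosmetic difference is that you identify the quotient by $J$ with $\RRR(\Zpoints)$ exactly via $y \mapsto 0$, whereas the paper just bounds its dimension above by lifting a basis of $\RRR(\Zpoints)$ to a spanning set.
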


\begin{proof}
    For any $f \in \II(\Zpoints)$ we have $f = f \otimes 1 \in \II(\Zpoints \times a)$ so that $\tau(f) \in \gr \, \II(\Zpoints \times a)$ and $\gr \, \II(\Zpoints) \otimes \F[y] \subseteq \gr \, \II(\Zpoints \times a)$. We have $y - a \in \II(\Zpoints \times a)$ so that $\tau(y-a) = y \in \gr \, \II(\Zpoints \times a)$ and $\F[\xx_N] \otimes (y) \subseteq \gr \, \II(\Zpoints \times a )$. We conclude that
    \begin{equation}
    \label{eq:slice-containment}
    \gr \, \II(\Zpoints \times a ) \supseteq (\gr \, \II(\Zpoints) \otimes \F[y]) + (\F[\xx_N] \otimes (y)) \quad \text{as ideals in $\F[\xx_N] \otimes \F[y].$}
    \end{equation}
    On the other hand, let $\BBB \subseteq \F[\xx_N]$ be an arbitrary set of polynomials which descends to a basis of $\RRR(\Zpoints) = \F[\xx_N]/\gr \, \II(\Zpoints)$; we have $|\BBB| = \dim_\F\RRR(\Zpoints) = |\Zpoints|$. Then $\BBB$ descends to a spanning set of 
    \[
    \frac{\F[\xx_N] \otimes \F[y]}{(\gr \, \II(\Zpoints) \otimes \F[y]) + (\F[\xx_N] \otimes (y))}
    \]
    so that
    \begin{equation*}
    |\Zpoints| = |\Zpoints \times a| = \dim_\F \RRR(\Zpoints \times a) \leq \dim_\F \frac{\F[\xx_N] \otimes \F[y]}{(\gr \, \II(\Zpoints) \otimes \F[y]) + (\F[\xx_N] \otimes (y))} \\
    \leq |\BBB| = |\Zpoints|
    \end{equation*}
    where the first inequality follows from \eqref{eq:slice-containment} since $\RRR(\Zpoints \times a) = (\F[\xx_N] \otimes \F[y])/\gr \, \II(\Zpoints \times a)$. We conclude that the containment \eqref{eq:slice-containment} of ideals is an equality and proves the first part of the lemma. The second part of the lemma follows from the first.
\end{proof}

\subsection{Filtrations} Let $A$ be an $\F$-algebra. A {\em filtration} $F_\bullet A$ of $A$ is an increasing chain of $\F$-linear subspaces
\begin{equation}
  F_0 A \subseteq F_1 A \subseteq F_2 A \subseteq \cdots 
\end{equation}
such that
\begin{equation}
    \bigcup_{d \geq 0} F_d A = A \quad \text{and} \quad F_i A \cdot F_j A \subseteq F_{i+j} A \text{ for all $i,j$}.
\end{equation}
We set $F_d A = 0$ whenever $d < 0$. An algebra equipped with a filtration is called a {\em filtered algebra}.

Let $A$ be a filtered $\F$-algebra with filtration $F_\bullet A$. The {\em associated graded algebra} is the direct sum of $\F$-vector spaces
\begin{equation}
    \gr \, A = \bigoplus_{d \geq 0} (\gr \, A)_d \quad \text{where} \quad (\gr\, A)_d := F_dA / F_{d-1}A.
\end{equation}
Multiplication is defined by the rule
\[
(f + F_{i-1} A) \cdot (g + F_{j-1} A) := (f \cdot g) + F_{i+j-1} A
\]
for all $f \in F_i A$ and $g \in F_j A$.  

Let $A$ and $B$ be filtered algebras with filtrations $F_\bullet A$ and $F_\bullet B$. Let $d \geq 0$, let $\varphi: A \to B$ be an $\F$-linear map, and assume
\begin{equation}
    \varphi( F_i A) \subseteq F_{i+d} B \quad \quad \text{for all $i$}.
\end{equation}
Then $\varphi$ induces an $\F$-linear map 
$\gr \, \varphi: \gr \, A \to \gr \, B$ defined by
\begin{equation}
    \gr \, \varphi: f + F_{i-1} A \mapsto \varphi(f) + F_{i+d-1} B \quad \quad \text{for all $f \in F_i A.$}
\end{equation}
The map $\gr \, \varphi$ is homogeneous of degree $d$. Certain nice properties of $\varphi$ can be lost upon passage to its associated graded map $\gr \, \varphi$. For example, even if $\varphi$ is injective, it is {\bf not} necessarily true that $\varphi$ is injective.  This phenomenon is the source of most of the technical difficulties in Section~\ref{sec:EAF}.

Let $S = \F[\xx_N]$ be the polynomial ring over $\xx_N = \{x_1,\dots,x_N\}$. The filtered algebras considered in this paper will have the form $R = S/I$ where $I \subseteq S$ is a (usually inhomogeneous) ideal. The subspace $F_d R \subseteq R$ is given by
\begin{equation}
    F_d R = \text{image of } S_{\leq d} \text{ in } R 
\end{equation}
where $S_{\leq d} \subseteq S$ is the subspace of polynomials of degree $\leq d$. In particular, this gives a filtered algebra structure on the ungraded orbit harmonics rings $S/\II(\Zpoints)$ for finite loci $\Zpoints \subseteq \F^N$. We include the following standard result on the associated graded rings of these algebras.

\begin{lemma}
    \label{lem:associated-graded-isomorphism}
    Let $S = \F[\xx_N]$ and let $I \subseteq S$ be an ideal and let $R = S/I$ be filtered as above. There is a canonical ring isomorphism
    \begin{equation}
        \varphi: S/ \gr \, I \xrightarrow{ \, \sim \, } \gr \, R.
    \end{equation}
\end{lemma}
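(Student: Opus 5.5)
The plan is to build the isomorphism degree by degree from the top-degree map $\tau$. First, define an $\F$-linear map on homogeneous pieces. For a homogeneous polynomial $f \in S_d$, its image $\bar f \in R = S/I$ lies in $F_d R$. Send $f$ to the class $\bar f + F_{d-1}R \in (\gr\, R)_d$. This gives a degree-preserving map $\tilde\varphi : S \to \gr\, R$ on homogeneous components, extended linearly. It is multiplicative, because for $f\in S_i$ and $g \in S_j$ we have $\overline{fg} = \bar f\,\bar g$, and the multiplication rule in $\gr\, R$ sends $(\bar f + F_{i-1}R)(\bar g + F_{j-1}R)$ to $\overline{fg} + F_{i+j-1}R$. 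So $\tilde\varphi$ is a graded ring homomorphism. It is surjective: $F_dR$ is the image of $S_{\le d}$, so any class in $F_dR/F_{d-1}R$ is represented by some $\bar h$ with $\deg h\le d$. Writing $h = h_d + (\text{lower})$, the lower part lies in $F_{d-1}R$, so the class is $\tilde\varphi(h_d)$.

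The main step is to show $\ker\tilde\varphi = \gr\, I$. Since $\gr\, I$ is homogeneous and $\tilde\varphi$ is graded, it suffices to compare homogeneous pieces. Take $f\in S_d$ with $\tilde\varphi(f)=0$. Then $\bar f\in F_{d-1}R$, so there is $g\in S_{\le d-1}$ with $f-g\in I$. If $f \neq 0$, then $\tau(f-g)=f$, so $f\in\gr\, I$. Conversely, it is enough to check the generators $\tau(h)$ for $0\neq h\in I$, since the kernel is an ideal. Let $d = \deg h$ and write $h=\tau(h)+h'$ with $\deg h'<d$. In $R$ we have $\overline{\tau(h)} = -\overline{h'}\in F_{d-1}R$, so $\tilde\varphi(\tau(h))=0$.

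A small point to check is that a homogeneous element of $\gr\, I$ of degree $d$ maps to zero even when it is a sum of products $a\,\tau(h)$. This is automatic because the kernel of a ring homomorphism is an ideal. The first isomorphism theorem then gives the canonical isomorphism $\varphi: S/\gr\, I \xrightarrow{\sim} \gr\, R$.

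The only genuinely delicate step is the inclusion $\ker\tilde\varphi \subseteq \gr\, I$. It hinges on the description $F_{d-1}R=\mathrm{im}(S_{\le d-1})$, which is what lets us produce the lower-degree correction $g$. Everything else is bookkeeping with the filtration.
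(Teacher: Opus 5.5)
Your proposal is correct and follows essentially the same route as the paper: you define the degree-$d$ map $S_d \to F_dR/F_{d-1}R$, check that it assembles to a surjective graded ring homomorphism, and identify its kernel with $\gr\, I$ by extracting the lower-degree correction from $F_{d-1}R = \mathrm{im}(S_{\le d-1})$. Your one small refinement is to check $\gr\, I \subseteq \ker$ only on the generators $\tau(h)$ and use that kernels are ideals; this avoids the paper's unstated claim that every homogeneous element of $\gr\, I$ is literally $\tau(g)$ for some $g \in I$.
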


\begin{proof}
    For all $d \geq 0$, let $S_d \subseteq S$ be the vector space of homogeneous polynomials of degree $d$. We have a natural $\F$-linear map
    \begin{equation}
        \varphi_d : S_d \to F_d R / F_{d-1} R
    \end{equation}
    given by $\varphi_d(f) := (f + I)+ F_{d-1} R$. The map $\varphi_d$ is surjective since for any $g \in S_{\leq d}$ we have $(g + I) + F_{d-1} R = 0$ unless $\deg(g) = d$, in which case $\varphi_d: \tau(g) \mapsto g + F_{d-1} R$.  The maps $\varphi_d$ assemble to give a linear surjection
    \begin{equation}
        \varphi: S \to \gr_F \, R
    \end{equation}
    with $\varphi = \bigoplus_{d \geq 0} \varphi_d$.
    One checks that $\varphi$ commutes with multiplication and is an $\F$-algebra epimorphism. 
    
    It suffices to show that $\ker(\varphi) = \gr \, I$. We have
    \begin{equation}
        \ker(\varphi) = \bigoplus_{d \geq 0} \ker(\varphi_d).
    \end{equation}
    Let $d \geq 0$, let $f \in S_d$ be a nonzero homogeneous polynomial, and assume $f \in \ker(\varphi_d)$. Then $f + I \in F_{d-1} R$, so there exists some $h \in S_{\leq d-1}$ so that $f + h \in I$. But $f = \tau(f + h) \in \gr \, I$ so that $f \in \ker(\varphi_d)$. We conclude that $\ker(\varphi) \subseteq \gr \, I$. 
    For the other containment, assume $f \in S_d$ is homogeneous and nonzero and that $f \in \gr \, I$. There exists $g \in I$ so that $f = \tau(g)$. We have $(f-g) \in S_{\leq d-1}$ so that $(f-g) + I \in F_{d-1} R$. We have
    \[
    \varphi(f) = (f + I) + F_{d-1} R = (g + I) + F_{d-1} R = 0
    \]
    so that $f \in \ker(\varphi).$
\end{proof}

\subsection{Homological algebra} The mapping cone construction of homological algebra will play a crucial role in our analysis of $\RRR(\DDD_n)$. We review this construction here and refer the reader to \cite{Weibel} for a textbook treatment.
For simplicity, we work in the category of $\F$-vector spaces, but the constructions and results given here hold in any abelian category.

A ($\ZZ_{\geq 0}$-indexed) {\em chain complex} $(A_\bullet,d)$ is a sequence $(A_n)_{n \geq 0}$ of $\F$-vector spaces equipped with linear maps 
\begin{equation}
\cdots \to A_{n+1} \xrightarrow{\, d_{n+1} \, } A_n \xrightarrow{ \, d_n \, } A_{n-1} \xrightarrow{\, d_{n-1} \, } \cdots \to A_1 \xrightarrow { \, d_1 \, } A_0 \to 0
\end{equation}
such that
\begin{equation}
    d_{n-1} \circ d_n = 0 \quad \quad \text{for all } n \geq 1.
\end{equation}
The vector spaces $A_n$ are called {\em chain groups}. 
Given two chain complexes $(A_\bullet, d^A)$ and $(B_\bullet,d^B)$, a {\em chain map} $f_\bullet: A_\bullet \to B_\bullet$ is a family of linear maps $f_n: A_n \to B_n$ such that the diagram
\[
    \begin{tikzpicture}[scale = 0.8]
            \node(An) at (0,0) {$A_n$};
            \node(Anm) at (4,0) {$A_{n-1}$};
            \node(Bn) at (0,-2) {$B_n$};
            \node(Bnm) at (4,-2) {$B_{n-1}$}; 

    \draw[->] (An) -- node[above] {$d_n^A$} (Anm);
    \draw[->] (Bn) -- node[below] {$d_n^B$} (Bnm);
    \draw[->] (An) -- node[left] {$f_n$} (Bn);
    \draw[->] (Anm) -- node[right] {$f_{n-1}$} (Bnm);
    \end{tikzpicture}
\]
commutes for all $n \geq 1$.

Let $(A_\bullet, d^A)$ and $(B_\bullet, d^B)$ be chain complexes and let $f_\bullet: A_\bullet \to B_\bullet$ be a chain map. The {\em mapping cone} $\Cone(f_\bullet)$ is a new complex whose chain groups are given by
\begin{equation}
    \Cone(f_\bullet)_n :=  B_n \oplus A_{n-1}
\end{equation}
for $n \geq 0$. (Here we interpret $A_{-1} := 0$.) The differential 
$d_n: \Cone(f_\bullet)_n \to \Cone(f_\bullet)_{n-1}$ is given by the formula
\begin{equation}
    d_n( b, a) := (d_{n}^B(b) + f_{n-1}(a), - d_{n-1}^A(a))
\end{equation}
for all $b \in B_n$ and $a \in A_{n-1}$. It can be shown that $d_{n-1} \circ d_n = 0$ for all $n \geq 1$ so that $\Cone(f_\bullet)$ is in fact a chain complex.

We will use mapping cones to inductively construct resolutions of objects in short exact sequences. Let 
\begin{equation}
    0 \to A  \xrightarrow{ \, \, \iota \, \, } B  \xrightarrow{ \, \, \pi \, \, } C \to 0
\end{equation}
be a short exact sequence of $\F$-vector spaces. Suppose we have exact sequences 
\begin{equation}
\label{eq:pq-resolution}
    \cdots \to P_2 \xrightarrow{\, d_2^P \, } P_1 \xrightarrow{\, d_1^P \,} P_0 \xrightarrow{\, d_0^P \,} A \to 0 \quad \text{and} \quad 
    \cdots \to Q_2 \xrightarrow{\, d_2^Q \,} Q_1 \xrightarrow{\, d_1^Q \, } Q_0 \xrightarrow{\, d_0^Q \, } B \to 0
\end{equation}
of $\F$-vector spaces. The situation of \eqref{eq:pq-resolution} is  expressed as $P_\bullet$ being a {\em resolution} of $A$ (and $Q_\bullet$ being a resolution of $B$).\footnote{For more general abelian categories, we would require that the $P_n$ and $Q_n$ be projective objects.} Suppose we have a chain map $f_\bullet: P_\bullet \to Q_\bullet$ with the additional condition that the diagram
\[
    \begin{tikzpicture}[scale = 0.8]
            \node(An) at (0,0) {$P_0$};
            \node(Anm) at (4,0) {$A$};
            \node(Bn) at (0,-2) {$Q_0$};
            \node(Bnm) at (4,-2) {$B$}; 

    \draw[->] (An) -- node[above] {$d_0^P$} (Anm);
    \draw[->] (Bn) -- node[below] {$d_0^Q$} (Bnm);
    \draw[->] (An) -- node[left] {$f_0$} (Bn);
    \draw[->] (Anm) -- node[right] {$\iota$} (Bnm);
    \end{tikzpicture}
\]
commutes. The $\Cone(f_\bullet)$ construction gives a resolution of $C$ as follows.

\begin{theorem}
    \label{thm:exact-cone}
    In the above situation, we have an exact sequence of $\F$-vector spaces given by
    \begin{equation}
      \cdots   \to \Cone(f_\bullet)_2 \xrightarrow{\, d_2 \, } \Cone(f_\bullet)_1 \xrightarrow{\, d_1 \, } \Cone(f_\bullet)_0 \to C \to 0
    \end{equation}
    where the map from $\Cone(f_\bullet)_0 = Q_0 \oplus P_{-1} = Q_0$ to $C$ is the composition
    \begin{equation}
        \Cone(f_\bullet)_0 = Q_0 \xrightarrow{\, d_0^Q \, } B \xrightarrow{\, \pi \, } C.
    \end{equation}
\end{theorem}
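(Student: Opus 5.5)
The plan is to augment the cone by the two resolved objects and reduce exactness to the long exact homology sequence of a mapping cone. Extend the resolutions by putting $A$ in degree $-1$ of $P_\bullet$ and $B$ in degree $-1$ of $Q_\bullet$. This gives acyclic complexes $\widetilde P_\bullet$ and $\widetilde Q_\bullet$, i.e.\ complexes with zero homology everywhere. The commuting square $\iota \circ d_0^P = d_0^Q \circ f_0$ says exactly that $f_\bullet$ extends to a chain map $\widetilde f_\bullet : \widetilde P_\bullet \to \widetilde Q_\bullet$ with $\widetilde f_{-1} = \iota$.

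The key input is the standard short exact sequence of complexes
\[
0 \to \widetilde Q_\bullet \to \Cone(\widetilde f_\bullet) \to \widetilde P_\bullet[-1] \to 0,
\]
with inclusion $b \mapsto (b,0)$ and projection $(b,a) \mapsto -a$ (or $a$, depending on sign conventions). Its long exact sequence in homology shows that $\Cone(\widetilde f_\bullet)$ is acyclic, since both outer complexes are. I will verify directly that these maps are chain maps; that is a routine check using the differential $d(b,a) = (d^B b + f a, -d^A a)$.

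Next I compare $\Cone(\widetilde f_\bullet)$ with the complex in the statement. In degrees $n \geq 1$ the group is $Q_n \oplus P_{n-1}$, which agrees with $\Cone(f_\bullet)_n$. In degree $0$ the group is $Q_0 \oplus A$, and in degree $-1$ it is $B$. So the acyclic complex reads
\[
\cdots \to Q_1 \oplus P_0 \to Q_0 \oplus A \to B \to 0 .
\]
The last map is $(q,a) \mapsto d_0^Q q + \iota a$. The map into $Q_0 \oplus A$ is $(q,p) \mapsto (d_1^Q q + f_0 p, -d_0^P p)$.

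It remains to quotient out the copy of $A$. The subcomplex $0 \to A \xrightarrow{\iota} B \to 0$ sits in degrees $0,-1$; it is embedded via $a \mapsto (0,a)$ and $b \mapsto b$. It is a subcomplex because the differential of $(0,a)$ is $\iota(a)$, and it is acyclic because $\iota$ is injective. Hence the quotient complex is acyclic by another long exact sequence. The quotient is
\[
\cdots \to \Cone(f_\bullet)_1 \to Q_0 \to C \to 0 .
\]
Here the degree $0$ term is $Q_0$ and the degree $-1$ term is $B/\iota(A) \cong C$ via $\pi$. The induced maps are $d_1$ of $\Cone(f_\bullet)$, obtained by dropping the $A$-component, and $\pi \circ d_0^Q$. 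This is exactly the asserted sequence, so it is exact.

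The main obstacle is bookkeeping: keeping the signs consistent so that the inclusion and projection are genuine chain maps, and checking that the embedded $A \to B$ really is a subcomplex. Both are short verifications; everything else is formal homological algebra.
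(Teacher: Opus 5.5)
The paper gives no proof of this statement (it defers to Weibel), so the comparison is with the standard argument, and your outline follows it. You augment both resolutions and observe that $\widetilde f_\bullet$ is a chain map exactly because of the commuting square. You then get acyclicity of $\Cone(\widetilde f_\bullet)$ from the long exact sequence of $0\to\widetilde Q_\bullet\to\Cone(\widetilde f_\bullet)\to\widetilde P_\bullet[-1]\to 0$, and finally strip off the extra copy of $A$. Everything up to that last step is fine.

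\textbf{The gap.} The last step is wrong as written. You embed $b\mapsto b$, so the subcomplex is $L$ with $L_0=0\oplus A$ and $L_{-1}=B$, i.e.\ $0\to A\xrightarrow{\iota}B\to 0$. Injectivity of $\iota$ only gives $H_0(L)=0$. In degree $-1$ you have $H_{-1}(L)=B/\iota(A)\cong C$, which is nonzero in general, so $L$ is not acyclic. Moreover, quotienting by $L$ leaves $B/B=0$ in degree $-1$, not $B/\iota(A)$. So your description of the quotient contradicts your own choice of subcomplex.

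\textbf{The repair.} Take instead $K$ with $K_0=0\oplus A$ and $K_{-1}=\iota(A)\subseteq B$.
\begin{itemize}
\item $K$ is a subcomplex, since $d_0(0,a)=\iota(a)\in\iota(A)$.
\item $K$ is acyclic, since $\iota\colon A\to\iota(A)$ is an isomorphism.
\item The quotient $\Cone(\widetilde f_\bullet)/K$ has $Q_n\oplus P_{n-1}$ in degrees $n\ge 1$, $Q_0$ in degree $0$, and $B/\iota(A)$ in degree $-1$. The last is identified with $C$ via $\pi$, because $\ker\pi=\iota(A)$.
\item The induced differentials are the $d_n$ of $\Cone(f_\bullet)$ together with $\pi\circ d_0^Q$.
\end{itemize}
The long exact sequence of $0\to K\to\Cone(\widetilde f_\bullet)\to\Cone(\widetilde f_\bullet)/K\to 0$ then gives the claimed exactness.

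Alternatively, you can keep $L$. The quotient $\Cone(\widetilde f_\bullet)/L$ is exactly $\Cone(f_\bullet)$, and the long exact sequence gives $H_n(\Cone(f_\bullet))\cong H_{n-1}(L)$. This is $0$ for $n\ge 1$ and $C$ for $n=0$. With this route you must still check that the connecting map is $[q]\mapsto[d_0^Q q]$, i.e.\ that it is induced by $\pi\circ d_0^Q$. That check is a one-line lift-and-apply-$d$ computation.
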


\section{Extension, averaging, and filtrations}
\label{sec:EAF}

Let $\RRRR \subseteq [n] \times [n]$ be a rook placement and let $(i_0,j_0) \in \RRRR$ be a rook. The goal of this section is to establish  an injection $\RRR(\symm_{n-1}(\RRRR/(i_0,j_0))) \hookrightarrow \RRR(\symm_n(\RRRR))$ induced by multiplication by $x_{i_0,j_0}$. This is done in Lemma~\ref{lem:x11-injection}. The arguments involved are technical and perhaps should be skipped on a first reading. We begin by considering arbitrary `forbidden' boards.

\subsection{Inhomogeneous ideals for arbitrary boards} Let $\BBB \subseteq [n] \times [n]$ be an arbitrary board. We write 
\begin{equation}
    \symm_n(\BBB) := \{ \text{permutation matrices of } w \in \symm_n \,:\, w(j) \neq i \text{ for all } (i,j) \in \BBB \}
\end{equation}
for the set of permutation matrices in $\symm_n$ which avoid $\BBB$. The following result gives an explicit generating set of the vanishing ideal of the locus $\symm_n(\BBB)$. In the next proof and throughout, we write
\begin{equation}
    x_\RRRR := \prod_{(i,j) \in \RRRR} x_{i,j} 
\end{equation}
for any rook placement $\RRRR \subseteq [n] \times [n]$. We also say that a permutation $w \in \symm_n$ {\em extends $\RRRR$} if $w(j) = i$ for all $(i,j) \in \RRRR$.

\begin{lemma}
    \label{lem:arbitrary-board-generators}
    The vanishing ideal $\II(\symm_n(\BBB)) \subseteq S$ is generated by $\dots$
    \begin{enumerate}
        \item differences $x_{i,1} + \cdots + x_{i,n} - 1$ and $x_{1,j} + \cdots + x_{n,j} - 1$ for $1 \leq i,j \leq n$,
        \item differences $x_{i,j}^2 - x_{i,j}$ for $1 \leq i,j \leq n$,
        \item products $x_{i,j} \cdot x_{i,j'}$ and $x_{i,j} \cdot x_{i',j}$ where $i \neq i'$ and $j \neq j'$, and
        \item variables $x_{i,j}$ for $(i,j) \in \BBB$.
    \end{enumerate}
\end{lemma}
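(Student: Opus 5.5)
Let $J \subseteq S$ denote the ideal generated by the four families in the statement. The plan is to show $J \subseteq \II(\symm_n(\BBB))$ directly, and then to prove the reverse containment by showing that $S/J$ is spanned by the images of at most $|\symm_n(\BBB)|$ elements. Once this is done, the chain
\[
|\symm_n(\BBB)| = \dim_\F S/\II(\symm_n(\BBB)) \leq \dim_\F S/J \leq |\symm_n(\BBB)|
\]
forces $J = \II(\symm_n(\BBB))$, because the surjection $S/J \twoheadrightarrow S/\II(\symm_n(\BBB))$ is then an isomorphism. The first equality is the standard fact that $S/\II(\Zpoints) \cong \F[\Zpoints]$ for a finite locus, via Lagrange interpolation.

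The containment $J \subseteq \II(\symm_n(\BBB))$ is a direct check on a permutation matrix $M$ avoiding $\BBB$. Each row and each column of $M$ has exactly one $1$, so the row and column sums minus $1$ vanish. Every entry is $0$ or $1$, so $x_{i,j}^2 - x_{i,j}$ vanishes. Two distinct entries in the same row or column cannot both be $1$, so the products in family (3) vanish. Finally, entries indexed by $\BBB$ are $0$ by the avoidance condition. I read (3) as requiring $j \neq j'$ for row products and $i \neq i'$ for column products.

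For the spanning step, I will work in $A := S/J$. By relations (2) and (3), any monomial reduces modulo $J$ either to $0$ or to a squarefree monomial $x_\RRRR$, where $\RRRR$ is a rook placement. By (4), $x_\RRRR = 0$ in $A$ whenever $\RRRR$ meets $\BBB$. The key step is to show that $x_\RRRR$ lies in the span of $\{x_w : w \in \symm_n(\BBB) \text{ extends } \RRRR\}$, where $x_w$ denotes the product over the full graph of $w$.

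To do this, I will induct downward on $|\RRRR|$. If $|\RRRR| < n$, choose a row $i$ not occupied by $\RRRR$ and apply the row relation:
\[
x_\RRRR = x_\RRRR \sum_{j} x_{i,j}.
\]
In each term $x_\RRRR x_{i,j}$, either column $j$ is already occupied by $\RRRR$, so the term is killed by (3), or $(i,j) \in \BBB$, so the term is killed by (4). Otherwise $x_\RRRR x_{i,j}$ is the monomial of the larger rook placement $\RRRR \cup \{(i,j)\}$, to which the induction hypothesis applies. When $|\RRRR| = n$, the placement $\RRRR$ is already the graph of a permutation, and that permutation lies in $\symm_n(\BBB)$ exactly when $x_\RRRR \neq 0$ is not killed by (4). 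The case $\RRRR = \varnothing$ gives $1 \in \mathrm{span}\{x_w\}$, so $A$ is spanned by $\{x_w : w \in \symm_n(\BBB)\}$ and $\dim A \leq |\symm_n(\BBB)|$.

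I expect no serious obstacle. The only care needed is bookkeeping: handling the index convention $w(j) = i$ for the matrix entries consistently, and recording that $\dim_\F S/\II(\Zpoints) = |\Zpoints|$, either via Theorem~\ref{thm:orbit-harmonics} or by interpolation. The downward induction on $|\RRRR|$ is the heart of the argument, and it is routine.
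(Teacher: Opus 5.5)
Your proposal is correct and is essentially the paper's proof: you check that the generators vanish on $\symm_n(\BBB)$, then bound $\dim_\F S/J$ by showing that the monomials of permutations $w \in \symm_n(\BBB)$ span the quotient. The only cosmetic difference is that the paper expands $1 = \prod_{i=1}^n (x_{i,1} + \cdots + x_{i,n}) = \sum_{w \in \symm_n(\BBB)} e_w$ all at once and then writes $x_\RRRR = x_\RRRR \cdot 1 = \sum_{w \text{ extends } \RRRR} e_w$, which is just the unrolled form of your row-by-row downward induction on $|\RRRR|$.
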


\begin{proof}
    Let $I \subseteq S$ be the ideal generated by polynomials of the form $(1)-(4)$. It is not difficult to check that each of these polynomials vanishes on $\symm_n(\BBB)$ so that
    \begin{equation}
        \label{eq:arbitrary-board-containment}
        I \subseteq \II(\symm_n(\BBB)) \quad \text{as ideals in $S$.}
    \end{equation}
    To prove that the containment \eqref{eq:arbitrary-board-containment} is an equality, it suffices to establish the dimension bound
    \begin{equation}
        \label{eq:dim-bound-board}
        \dim_\F S/I \leq |\symm_n(\BBB) | = \dim_\F \F[\symm_n(\BBB)] = \dim_\F S/\II(\symm_n(\BBB)).
    \end{equation}
    For $w \in \symm_n(\BBB)$ we write
    \begin{equation}
        e_w := \prod_{j=1}^n x_{w(j),j}.
    \end{equation}
    A direct calculation in the quotient ring $S/I$ gives 
    \begin{equation}
        \label{eq:dbb-one}
        1 = \prod_{i=1}^n 1 = \prod_{i=1}^n (x_{i,1} + \cdots + x_{i,n}) = \sum_{w \in \symm_n(\BBB)} e_w
    \end{equation}
    since the only surviving terms in the expansion of $\prod_{i=1}^n (x_{i,1} + \cdots + x_{i,n})$ in $\F[\symm_n(\BBB)]$ correspond to permutations avoiding $\BBB$.
    
    The quotient ring $S/I$ is spanned over $\F$ by the rook placement monomials
    \begin{equation}
    \{ x_\RRRR \,:\, \RRRR \subseteq [n] \times [n] \text{ a rook placement} \}.
    \end{equation}
    Working in $S/I$, if $\RRRR$ is a rook placement we have
    \begin{equation}
        x_\RRRR = x_\RRRR \cdot 1 = x_\RRRR \cdot \sum_{w \in \symm_n(\BBB)} e_w = \sum_{\substack{w \in \symm_n(\BBB) \\ w \text{ extends } \RRRR}} e_w
    \end{equation}
    and we conclude that $\{ e_w \,:\, w \in \symm_n(\BBB) \}$ spans $S/I$ over $\F$. The dimension inequality \eqref{eq:dim-bound-board} follows and the proof is complete.
\end{proof}

The elements $\{ e_w \,:\, w \in \symm_n(\BBB) \}$ in $\F[\symm_n(\BBB)]$ appearing in the above proof satisfy $e_w^2 =e_w$ for all $w \in \symm_n(\BBB)$ and $e_w \cdot e_v = 0$ when $w \neq v$. Since $\sum_{w \in \symm_n(\BBB)} e_w = 1$, the $e_w$ are a complete set of orthogonal idempotents in $\F[\symm_n(\BBB)]$.

There is a natural filtration $F_\bullet$ on $\F[\symm_n(\BBB)] = S/\II(\symm_n(\BBB))$ given by
\[
F_d \F[\symm_n(\BBB)] = \text{image of $S_{\leq d}$ in } \F[\symm_n(\BBB)].
\]
The vector space $F_d \F[\symm_n(\BBB)]$ has a nice spanning set. 

\begin{lemma}
    \label{lem:board-filtration-span}
    The following collection of squarefree monomials descends to a spanning set of $F_d \F[\symm_n(\BBB)]$:
    \[
    \{ x_\RRRR \,:\, \RRRR \subseteq [n] \times [n] \text{ a rook placement and } |\RRRR| \leq d \}.
    \]
\end{lemma}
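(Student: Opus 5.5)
The plan is to reduce everything to monomials of degree at most $d$ and then use the generators from Lemma~\ref{lem:arbitrary-board-generators} to rewrite each such monomial inside $\F[\symm_n(\BBB)] = S/\II(\symm_n(\BBB))$.

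By definition, $F_d \F[\symm_n(\BBB)]$ is the image of $S_{\leq d}$. Since $S_{\leq d}$ is spanned by the monomials $m = \prod x_{i,j}^{a_{i,j}}$ with $\sum a_{i,j} \leq d$, it suffices to show that the image of each such monomial is either $0$ or equal to $x_\RRRR$ for some rook placement $\RRRR$ with $|\RRRR| \leq d$.

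Fix such a monomial $m$. I would handle it in three steps, each using the generators of Lemma~\ref{lem:arbitrary-board-generators}.
\begin{enumerate}
\item Generator (2) gives $x_{i,j}^2 = x_{i,j}$ in the quotient. Iterating, $x_{i,j}^{a} = x_{i,j}$ for every $a \geq 1$. So $m$ equals its squarefree support $\prod_{(i,j) \in T} x_{i,j}$, where $T = \{(i,j) : a_{i,j} > 0\}$ and $|T| \leq \sum a_{i,j} \leq d$.
\item Suppose $T$ contains two distinct cells in the same row, $(i,j)$ and $(i,j')$ with $j \neq j'$. Then the product is divisible by $x_{i,j} x_{i,j'}$, which is a generator of type (3), so the image is $0$. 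The same argument applies to two distinct cells in the same column, giving $0$ again.
\item Otherwise $T$ is a nonattacking rook placement with $|T| \leq d$, and the image of $m$ is exactly $x_T$.
\end{enumerate}
This shows that every spanning monomial of $S_{\leq d}$ maps into the span of the stated set.

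For the reverse containment, note that each $x_\RRRR$ with $|\RRRR| \leq d$ has degree $|\RRRR| \leq d$. Hence $x_\RRRR \in S_{\leq d}$, and its image lies in $F_d$. So the stated set spans $F_d$ exactly.

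The argument is essentially routine, and there is no real obstacle. The only point needing care is the statement of generator (3). It is written with the condition ``$i \neq i'$ and $j \neq j'$,'' and this should be read as: the product $x_{i,j}x_{i,j'}$ is a generator whenever $j \neq j'$, and the product $x_{i,j}x_{i',j}$ is a generator whenever $i \neq i'$. That reading is what lets us kill two distinct cells sharing a row or a column. It is also consistent with the proof of Lemma~\ref{lem:arbitrary-board-generators}, which reduces to rook placement monomials in exactly this way.

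Note also that generators (1) and (4) are not needed here. Generator (4) could be used to drop those $x_\RRRR$ with $\RRRR \cap \BBB \neq \varnothing$, since such monomials vanish, but this only shrinks the spanning set and is not required by the statement.
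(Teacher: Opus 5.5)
Your proposal is correct and follows the same route as the paper: use $x_{i,j}^2 - x_{i,j}$ to replace a monomial of degree at most $d$ by its squarefree support. That support either is a rook placement with at most $d$ cells or contains two cells in a common row or column, in which case the monomial vanishes by the product generators. Your reading of generator (3) is the intended one, and your check of the reverse containment (which the paper leaves implicit) is a harmless addition.
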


\begin{proof}
    The vector space $F_d \F[\symm_n(\BBB)]$ is spanned by cosets $m + \II(\symm_n(\BBB))$ where $m \in S$ is a monomial of degree $\leq d$. Write $m = \prod_{i,j=1}^n x_{i,j}^{a_{i,j}}$ where $a_{i,j} \geq 0$ and $\sum_{i,j} a_{i,j} \leq d$. By Lemma~\ref{lem:arbitrary-board-generators} we have
    $m + \II(\symm_n(\BBB)) = \bar{m} + \II(\symm_n(\BBB))$ where $\bar{m} \in S$ is the squarefree monomial
    \begin{equation}
    \bar{m} := \prod_{i,j = 1}^n x_{i,j}^{\bar{a}_{i,j}} \quad \quad \bar{a}_{i,j} := \begin{cases}
        1 & a_{i,j} > 0, \\
        0 & a_{i,j} = 0.
    \end{cases}
    \end{equation}
    Lemma~\ref{lem:arbitrary-board-generators} also implies that $\bar{m} \in \II(\symm_n(\BBB))$ unless $\bar{m} = x_\SSS$ for some rook placement $\SSS \subseteq [n] \times [n]$ with $|\SSS| = \sum_{i,j} \bar{a}_{i,j} \leq \sum_{i,j} a_{i,j} \leq d$.
\end{proof}

\subsection{Extension and averaging} Throughout this technical subsection, we use the following 

\begin{quote}
    {\bf Notation.} {\em Fix an integer $n \geq 2$ and a subset $T \subseteq \{2,\dots,n\}$. Let $\RRRR \subseteq [n] \times [n]$ be the rook placement 
    \[\RRRR := \{(i,i) \,:\, i \in T \}.\]}
    We have the set $\symm_n(\RRRR) \subset \F^{n \times n}$ of permutation matrices avoiding $\RRRR$. Write
    $\overline{\symm_n(\RRRR)}  \subseteq \symm_n(\RRRR)$ for the subset
    \[
    \overline{\symm_n(\RRRR)} := \{ w \in \symm_n(\RRRR) \,:\, w(1) = 1 \}.
    \]
\end{quote}

In the definition of $\overline{\symm_n(\RRRR)}$, we identify permutations with their permutation matrices. Regarding $\symm_{n-1}$ as permutations of $\{2,\dots,n\}$ gives a natural identification  $\overline{\symm_n(\RRRR)} \xrightarrow{\, \, \sim \, \, } \symm_{n-1}(\RRRR)$ induced by deleting the first row and column.

We have nested vanishing ideals $\II(\symm_n(\RRRR)) \subseteq \II(\overline{\symm_n(\RRRR)}) \subseteq S$ and ungraded quotient rings
\begin{equation}
    \F[\symm_n(\RRRR)] = S/ \II(\symm_n(\RRRR)) \quad \text{and} \quad \F[\overline{\symm_n(\RRRR)}] = S / \II(\overline{\symm_n(\RRRR)}).
\end{equation}
Both of these $\F$-algebras have a filtration induced from the standard filtration on $S$, i.e.
\begin{align*}
    F_d \F [\symm_n(\RRRR)] &= \text{image of $S_{\leq d}$ in } \F[\symm_n(\RRRR)] \text{ and } \\
    F_d \F [\overline{\symm_n(\RRRR)}] &= \text{image of $S_{\leq d}$ in } \F[\overline{\symm_n(\RRRR)}]. 
\end{align*}
We think of $\F[\symm_n(\RRRR)]$ both as the quotient ring $S/\II(\symm_n(\RRRR))$ and the $\F$-algebra of functions $f: \symm_n(\RRRR) \to \F$ with pointwise operations, and similarly for $\F[\overline{\symm_n(\RRRR)}]$. This subsection introduces `extension' and `averaging' maps
\begin{equation}
    \EEE: \F[\overline{\symm_n(\RRRR)}] \to \F[\symm_n(\RRRR)] \quad \text{and} \quad 
    \AAA: \F[\symm_n(\RRRR)] \to \F[\overline{\symm_n(\RRRR)}]
\end{equation}
which will help prove a deletion-contraction short exact sequence. The extension maps are defined using quotient rings as follows.

\begin{lemma}
    \label{lem:extension-definition}
    Multiplication by $x_{1,1}$ induces a well-defined $S$-module homomorphism
    \[
    \EEE: \F[\overline{\symm_n(\RRRR)}] \xrightarrow{\, \, x_{1,1} \cdot (-) \,  \,} \F[\symm_n(\RRRR)].
    \]
    For any integer $d$ we have
    \[
    \EEE( F_d \F[\overline{\symm_n(\RRRR)}] ) \subseteq  F_{d+1} \F[\symm_n(\RRRR)].
    \]
\end{lemma}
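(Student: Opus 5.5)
To show that $\EEE$ is well defined, it suffices to check the ideal containment
\[
x_{1,1} \cdot \II(\overline{\symm_n(\RRRR)}) \subseteq \II(\symm_n(\RRRR)).
\]
Once this holds, $f + \II(\overline{\symm_n(\RRRR)}) \mapsto x_{1,1} f + \II(\symm_n(\RRRR))$ is independent of the representative $f$. It is also visibly $S$-linear, since multiplication by $x_{1,1}$ commutes with multiplication by any element of $S$.

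I would check the containment pointwise, viewing both rings as algebras of functions on the finite loci. Let $f \in \II(\overline{\symm_n(\RRRR)})$ and let $w \in \symm_n(\RRRR)$, with $w$ identified with its permutation matrix. There are two cases.
\begin{itemize}
\item If $w(1) = 1$, then $w \in \overline{\symm_n(\RRRR)}$. Hence $f(w) = 0$, and so $(x_{1,1} f)(w) = 0$.
\item If $w(1) \neq 1$, then the $(1,1)$ entry of the permutation matrix of $w$ is $0$. Thus $x_{1,1}(w) = 0$, and again $(x_{1,1} f)(w) = 0$.
\end{itemize}
Therefore $x_{1,1} f$ vanishes on all of $\symm_n(\RRRR)$, which gives the containment.

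Equivalently, $\EEE$ sends a function $g$ on $\overline{\symm_n(\RRRR)}$ to its extension by zero to $\symm_n(\RRRR)$. This explains the name ``extension.''

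For the filtration statement, recall that $F_d \F[\overline{\symm_n(\RRRR)}]$ is the image of $S_{\leq d}$, by definition. So any element of it has the form $f + \II(\overline{\symm_n(\RRRR)})$ with $\deg f \leq d$. Its image under $\EEE$ is $x_{1,1} f + \II(\symm_n(\RRRR))$. Since $\deg(x_{1,1} f) \leq d+1$, this image lies in $F_{d+1} \F[\symm_n(\RRRR)]$.

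For $d < 0$ the source filtration piece is $0$, so the inclusion is trivial.

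I expect no real obstacle here. The only point needing care is the well-definedness, and it reduces to the observation that $x_{1,1}$ vanishes exactly on the complement of $\overline{\symm_n(\RRRR)}$ inside $\symm_n(\RRRR)$. For this one could alternatively invoke the generators in Lemma~\ref{lem:arbitrary-board-generators}, but the pointwise argument is simpler.
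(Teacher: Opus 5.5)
Your proof is correct, but it establishes well-definedness by a more direct route than the paper does.

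The paper first proves an explicit presentation
\[
\II(\overline{\symm_n(\RRRR)}) = \II(\symm_n(\RRRR)) + (x_{1,1}-1) + (x_{i,1}, x_{1,j} \,:\, 2 \leq i,j \leq n),
\]
which is equation \eqref{eq:bar-ideal}. It gets the nontrivial containment by showing that the idempotents $e_w$ for $w \in \overline{\symm_n(\RRRR)}$ span the quotient by the right-hand side, and then comparing dimensions. Only after that does it verify $x_{1,1} \cdot \II(\overline{\symm_n(\RRRR)}) \subseteq \II(\symm_n(\RRRR))$, by checking the generators $x_{1,1}(x_{1,1}-1)$, $x_{1,1}x_{i,1}$, $x_{1,1}x_{1,j}$ against Lemma~\ref{lem:arbitrary-board-generators}.

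You use nothing but the definition of a vanishing ideal. The product $x_{1,1} f$ vanishes on $\overline{\symm_n(\RRRR)}$ because $f$ does, and on the rest of $\symm_n(\RRRR)$ because $x_{1,1}$ does. Your argument has these advantages:
\begin{itemize}
\item It needs no presentation of either ideal and no dimension count.
\item It shows more generally that for finite loci $\Zpoints' \subseteq \Zpoints$, any polynomial vanishing on $\Zpoints \setminus \Zpoints'$ multiplies $\II(\Zpoints')$ into $\II(\Zpoints)$.
\item It is exactly the extension-by-zero description that the paper itself records right after the lemma.
\end{itemize}
The paper's route yields the explicit presentation \eqref{eq:bar-ideal} as a byproduct and keeps the argument phrased in terms of generators, but that is more than this lemma requires. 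The filtration claim is handled the same way in both arguments.
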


\begin{proof}
    To check that $\EEE$ is well-defined, we need to prove 
    \begin{equation}
    \label{eq:mult-bar-containment}
        x_{1,1} \cdot \II(\overline{\symm_n(\RRRR)}) \subseteq \II(\symm_n(\RRRR)) \quad \text{in $S$.}
    \end{equation}
    Lemma~\ref{lem:arbitrary-board-generators} gives a generating set for the ideal $\II(\symm_n(\RRRR))$. We claim that 
    \begin{equation}
        \label{eq:bar-ideal}
        \II(\overline{\symm_n(\RRRR)}) = \II(\symm_n(\RRRR)) + (x_{1,1} - 1) + (x_{i,1}, x_{1,j} \,:\, 2 \leq i,j \leq n ) \quad \text{as ideals in $S$.}
    \end{equation}
    Since $x_{1,1} - 1$ and $x_{i,1}, x_{j,1}$ vanish on $\overline{\symm_n(\RRRR)}$ for $i,j > 1$ we have
    \begin{equation}
    \label{eq:bar-ideal-containment}
        \II(\overline{\symm_n(\RRRR)}) \supseteq \II(\symm_n(\RRRR)) + (x_{1,1} - 1) + (x_{i,1}, x_{1,j} \,:\, 2 \leq i,j \leq n ).
    \end{equation}
    Using the notation of the proof of Lemma~\ref{lem:arbitrary-board-generators}, the set $\{ e_w \,:\, w \in \symm_n(\RRRR) \}$ descends to an $\F$-basis of $S/\II(\symm_n(\RRRR))$. The containment \eqref{eq:bar-ideal-containment} implies that 
    \begin{multline}
    \label{eq:long-quotient-spanning}
        \text{$\{ e_w \,:\, w \in \overline{\symm_n(\RRRR)} \}$ descends to a spanning set of the quotient ring} \\
        \frac{S}{\II(\symm_n(\RRRR)) + (x_{1,1} - 1) + (x_{i,1}, x_{1,j} \,:\, 2 \leq i,j \leq n )}.
    \end{multline}
    Combining \eqref{eq:bar-ideal-containment} and \eqref{eq:long-quotient-spanning}, we conclude the desired ideal equality \eqref{eq:bar-ideal}. The desired containment \eqref{eq:mult-bar-containment} is reduced to checking that 
    \[
        x_{1,1} \cdot (x_{1,1} - 1) \in \II(\symm_n(\RRRR))
    \]
    and that
    \[
        x_{1,1} \cdot x_{i,1}, \, x_{1,1} \cdot x_{1,j} \in \II(\symm_n(\RRRR)) \quad \text{ for all } 2 \leq i,j \leq n.
    \]
    Both of these memberships follow from Lemma~\ref{lem:arbitrary-board-generators}. This establishes \eqref{eq:mult-bar-containment}, and $\EEE$ is well-defined. It is clear that $\EEE$ carries $F_d \F[\overline{\symm_n(\RRRR)}]$ into $F_{d+1} \F[\symm_n(\RRRR)]$.
\end{proof}

The term `extension' is justified by the action of $\EEE$ on functions $f: \overline{\symm_n(\RRRR)} \to \F$. We have
\begin{equation}
    \EEE(f)(w) = \begin{cases}
        f(w) & \text{if $w(1) = 1$,} \\
        0 & \text{otherwise,}
    \end{cases}
\end{equation}
for all functions $f: \overline{\symm_n(\RRRR)} \to \F$ and all $w \in \symm_n(\RRRR)$. The map $\EEE$ is therefore extension by zero so that $\EEE: \F[\overline{\symm_n(\RRRR)}] \to \F[\symm_n(\RRRR)]$ is an injection. Lemma~\ref{lem:associated-graded-isomorphism} and Lemma~\ref{lem:extension-definition} furnish an homogeneous degree 1 map
\begin{equation}
    \gr \, \EEE: \RRR(\overline{\symm_n(\RRRR)}) \xrightarrow{\, \, x_{1,1} \cdot (-) \, \, } \RRR(\symm_n(\RRRR))
\end{equation}
between graded orbit harmonics quotient rings which is induced by multiplication by $x_{1,1}$. The injectivity of $\EEE$ does not by itself establish the injectivity of $\gr \, \EEE$. The rest of this subsection is devoted to proving that that $\gr \, \EEE$ is injective (Lemma~\ref{lem:gr-e-injective}).

The injectivity of $\gr \, \EEE$ will be proven using an `averaging operator' $\AAA: \F[\symm_n(\RRRR)] \to \F[\overline{\symm_n(\RRRR)}]$. To define $\AAA$, we need some notation. For a permutation matrix $w \in \symm_n$ and $2 \leq b \leq n$, define
\begin{equation}
    w_b := \text{permutation matrix obtained by swapping rows 1 and $b$ of $w$}.
\end{equation}
 Since $\RRRR = \{ (i,i) \,:\, i \in T \}$ and $T \subseteq \{2,\dots,n\}$ we have the following observation.

\begin{observation}
    \label{obs:b-avoid}
    Let $w \in \overline{\symm_n(\RRRR)}$ and let $2 \leq b \leq n$. We have $w_b \in \symm_n(\RRRR \cup (1,1)).$
\end{observation}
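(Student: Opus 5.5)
The plan is a direct check on the positions of the nonzero entries, using the convention fixed in the definition of $\symm_n(\BBB)$: the permutation matrix of $w$ has its $1$'s exactly at the positions $(w(j),j)$ for $1 \leq j \leq n$. Swapping rows $1$ and $b$ replaces each row index $i$ by $t(i)$, where $t = (1,b)$ is the transposition. So $w_b$ is the permutation matrix of $t \circ w$, and its $1$'s sit at the positions $(t(w(j)), j)$. It therefore suffices to show that $t(w(j)) \neq j$ for $j = 1$ and for every $j \in T$. Here we use that $\RRRR \cup (1,1) = \{(i,i) \,:\, i \in T \cup \{1\}\}$ consists only of diagonal cells.

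First take $j = 1$. Since $w \in \overline{\symm_n(\RRRR)}$ we have $w(1) = 1$, so $t(w(1)) = t(1) = b$. As $b \geq 2$, we get $b \neq 1$, which means $w_b$ avoids the cell $(1,1)$.

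Next take $j \in T$, so $j \geq 2$. Since $w$ avoids $\RRRR$ we have $w(j) \neq j$, and since $w$ is a bijection with $w(1) = 1$ we also have $w(j) \neq 1$. I split into two cases.
\begin{itemize}
\item If $w(j) = b$, then $t(w(j)) = 1$, and $1 \neq j$ because $j \geq 2$.
\item If $w(j) \notin \{1, b\}$, then $t$ fixes $w(j)$, so $t(w(j)) = w(j) \neq j$.
\end{itemize}
In either case the cell $(j,j)$ is avoided, which gives $w_b \in \symm_n(\RRRR \cup (1,1))$.

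There is no real obstacle here. The only point needing care is to use the paper's row/column convention consistently: a row swap acts by post-composition with $t$ on the value $w(j)$, not by precomposition. The one place the hypothesis $T \subseteq \{2,\dots,n\}$ is essential is the case $w(j) = b$, where it guarantees $j \neq 1$.
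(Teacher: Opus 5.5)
Your proof is correct and matches the paper's approach: the paper gives no separate argument, only noting that the observation follows because $\RRRR = \{(i,i) \,:\, i \in T\}$ with $T \subseteq \{2,\dots,n\}$ and illustrating it with an example, and your case split on whether $w(j) = b$ is exactly that direct check made explicit. Because $\RRRR \cup (1,1)$ consists only of diagonal cells, the conclusion does not depend on which row/column convention is used for permutation matrices. The paper itself uses both conventions, one in the introduction and another in Section~3.
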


As an example of Observation~\ref{obs:b-avoid}, suppose $n = 4$, $\RRRR = \{(2,2), (3,3)\}$, and let $w \in \overline{\symm_n(\RRRR)}$ be the permutation matrix
\[
w = \begin{pmatrix}
 1 & 0 & 0 & 0 \\
 0 & \boxtimes & 1 & 0 \\
 0 & 0 & \boxtimes & 1 \\
 0 & 1 & 0 & 0 
\end{pmatrix}
\]
where the $\boxtimes$ occupy the `forbidden' positions of $\RRRR$. The condition  $w \in \overline{\symm_n(\RRRR)}$ corresponds to the 1 in the $(1,1)$-position. We have
\[
w_2 = \begin{pmatrix}
 0 & 0 & 1 & 0 \\
 1 & \boxtimes & 0 & 0 \\
 0 & 0 & \boxtimes & 1 \\
 0 & 1 & 0 & 0 
\end{pmatrix}, \quad
w_3 = \begin{pmatrix}
 0 & 0 & 0 & 1 \\
 0 & \boxtimes & 1 & 0 \\
 1 & 0 & \boxtimes & 0 \\
 0 & 1 & 0 & 0 
\end{pmatrix}, \quad \text{and} \quad 
w_4 = 
\begin{pmatrix}
 0 & 1& 0 & 0 \\
 0 & \boxtimes & 1 & 0 \\
 0 & 0 & \boxtimes & 1 \\
 1 & 0 & 0 & 0 
\end{pmatrix}.
\]
Each of $w_2, w_3, w_4$ avoid the $\boxtimes$ positions of $\RRRR$ as well as the position $(1,1)$.

The averaging map $\AAA: \F[\symm_n(\RRRR)] \to \F[\overline{\symm_n(\RRRR)}]$ is defined using functions. For a function $f: \symm_n(\RRRR) \to \F$ and $w \in \overline{\symm_n(\RRRR)}$ we set
\begin{equation}
    \AAA(f)(w) := \sum_{b=2}^n f(w_b).
\end{equation}
Observation~\ref{obs:b-avoid} implies that this gives a well-defined function $\AAA(f): \overline{\symm_n(\RRRR)} \to \F$. 

The map $\AAA: \F[\symm_n(\RRRR)] \to \F[\overline{\symm_n(\RRRR)}]$ is $\F$-linear. Its purpose is to prove that $\gr \, \EEE$ is injective. The next three lemmas establish important properties of $\AAA$.

\begin{lemma}
    \label{lem:ea-zero}
    We have $\AAA \circ \EEE= 0$ as an operator on $\F[\overline{\symm_n(\RRRR)}]$.
\end{lemma}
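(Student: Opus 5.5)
This follows directly from the functional descriptions of the two maps. Fix $f \in \F[\overline{\symm_n(\RRRR)}]$ and $w \in \overline{\symm_n(\RRRR)}$. By definition of the averaging operator,
\[
(\AAA \circ \EEE)(f)(w) = \sum_{b=2}^n \EEE(f)(w_b).
\]
I will show that every summand vanishes.

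Recall from the discussion after Lemma~\ref{lem:extension-definition} that $\EEE$ is extension by zero. Concretely, $\EEE(f)(v) = f(v)$ if $v(1) = 1$, and $\EEE(f)(v) = 0$ otherwise, for every $v \in \symm_n(\RRRR)$. This holds because $\EEE$ is multiplication by $x_{1,1}$, and $x_{1,1}$ evaluates to $1$ on the permutation matrix $v$ exactly when its $(1,1)$-entry is $1$.

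By Observation~\ref{obs:b-avoid}, each $w_b$ with $2 \leq b \leq n$ lies in $\symm_n(\RRRR \cup (1,1))$. In particular $w_b \in \symm_n(\RRRR)$, so $\EEE(f)(w_b)$ is defined. Moreover $w_b$ has a $0$ in position $(1,1)$, so $x_{1,1}$ vanishes at $w_b$. This is also visible directly: $w$ has its $1$ in row $1$ at column $1$, and swapping rows $1$ and $b \neq 1$ moves that entry to row $b$. Hence $\EEE(f)(w_b) = 0$ for each $b$.

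Summing over $b$ gives $\AAA(\EEE(f))(w) = 0$ for all $w$, so $\AAA(\EEE(f)) = 0$ for all $f$, which proves the lemma. The only care needed is to interpret $\EEE$ through its function-level description rather than through quotient rings. I do not expect any step here to be an obstacle.
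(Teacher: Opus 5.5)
Your proof is correct and takes essentially the same approach as the paper: $\EEE$ acts as extension by zero off $\overline{\symm_n(\RRRR)}$, and by Observation~\ref{obs:b-avoid} each $w_b$ lies in $\symm_n(\RRRR \cup (1,1))$, so every summand $\EEE(f)(w_b)$ in $\AAA(\EEE(f))(w)$ vanishes. Your direct check that the row swap moves the $(1,1)$-entry of $w$ into row $b$ is just a more explicit version of the paper's appeal to $\overline{\symm_n(\RRRR)} = \symm_n(\RRRR) \setminus \symm_n(\RRRR \cup (1,1))$.
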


\begin{proof}
    For $f: \symm_n(\RRRR) \to \F$, the function $\EEE(f): \symm_n(\RRRR) \to \F$ vanishes off of $\overline{\symm_n(\RRRR)} \subseteq \symm_n(\RRRR)$. Since 
    \[
    \overline{\symm_n(\RRRR)} = \symm_n(\RRRR) \setminus \symm_n(\RRRR \cup (1,1)),
    \]
    we are done by Observation~\ref{obs:b-avoid} and the definition of $\AAA$.
\end{proof}

We want to understand how $\AAA: \F[\symm_n(\RRRR)] \to \F[\overline{\symm_n(\RRRR)}]$ interacts with filtrations. To this end, we study how $\AAA$ acts on functions $\symm_n(\RRRR) \to \F$ of the form $x_\SSS = \prod_{(i,j) \in \SSS} x_{i,j}$ where $\SSS \subseteq [n] \times [n]$ is a rook placement. The function $\AAA(x_\SSS)$ depends on how $\SSS$ interacts with the first row and column of $[n] \times [n]$.

\begin{lemma}
    \label{lem:a-on-rook-placement}
    Let $\SSS \subseteq [n] \times [n]$ be a rook placement and consider the squarefree monomial $x_\SSS$ as a function $\symm_n(\RRRR) \to \F$ so that $\AAA(x_\SSS)$ is a function $\overline{\symm_n(\RRRR)} \to \F$. As elements of $\F[\overline{\symm_n(\RRRR)}]$ we have the following identifications.
      \begin{enumerate}
        \item If $\SSS$ does not occupy the first row or first column we have
        \[\AAA(x_\SSS) = (n-1-|\SSS|) \cdot x_\SSS \quad \text{in $\F[\overline{\symm_n(\RRRR)}]$}.\] 
        \item If $(1,1) \in \SSS$ then
        \[
        \AAA(x_\SSS) = 0 \quad \text{in $\F[\overline{\symm_n(\RRRR)}]$}.
        \]
        \item If $(1,c)\in \SSS$ where $c \neq 1$ and does not involve the first column we have 
        \[\AAA(x_\SSS) =  x_{\SSS'} \text{ where } \SSS' = \SSS \setminus \{(1,c)\} \quad \text{in $\F[\overline{\symm_n(\RRRR)}]$}.\] 
        \item If $(r,1)\in \SSS$ where $r\neq 1$ and does not involve the first row we have 
        \[\AAA(x_\SSS) =  x_{\SSS'} \text{ where } \SSS' = \SSS \setminus \{(r,1)\} \quad \text{in $\F[\overline{\symm_n(\RRRR)}]$}.\]
        \item If $(1,c),(r,1)\in \SSS$ where $c,r \neq 1$ we have 
        \[
        \AAA(x_\SSS) = x_{\SSS'\cup (r,c)} \text{ where } \SSS' = \SSS \setminus \{ (1,c), (r,1) \} \quad \text{in $\F[\overline{\symm_n(\RRRR)}]$}.
        \]
    \end{enumerate}
\end{lemma}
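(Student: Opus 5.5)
Since $\F[\overline{\symm_n(\RRRR)}]$ is the algebra of $\F$-valued functions on $\overline{\symm_n(\RRRR)}$, I will prove each identity by evaluating both sides at an arbitrary $w \in \overline{\symm_n(\RRRR)}$. No ideal manipulation is needed: everything reduces to comparing the $1$-entries of the permutation matrices $w$ and $w_b$.

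\emph{Setup.} Fix $w \in \overline{\symm_n(\RRRR)}$, so $w$ has a $1$ in position $(1,1)$. For $2 \leq b \leq n$, let $c_b \neq 1$ be the column of the unique $1$ in row $b$ of $w$. Then $w_b$ agrees with $w$ outside rows $1$ and $b$. In those two rows, $w_b$ has its $1$'s at $(1,c_b)$ and $(b,1)$. For a rook placement $\SSS$, the value $x_\SSS(v)$ at a permutation matrix $v$ is $1$ if every position of $\SSS$ carries a $1$ in $v$, and $0$ otherwise. Hence
\[
\AAA(x_\SSS)(w) = \#\{\, 2 \leq b \leq n \,:\, \SSS \subseteq w_b \,\},
\]
and the task is to count such $b$ in each of the five cases.

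\emph{Cases (1) and (2).} In case (2), $w_b$ never has a $1$ at $(1,1)$, so every term vanishes. In case (1), $\SSS$ avoids row $1$ and column $1$. Then $\SSS \subseteq w_b$ holds exactly when $\SSS \subseteq w$ and $\SSS$ has no rook in row $b$: a rook of $\SSS$ in row $b$ would have to sit at $(b,1)$, which is excluded. Since $\SSS$ occupies exactly $|\SSS|$ of the rows $2,\dots,n$, summing over $b$ gives $(n-1-|\SSS|)\cdot x_\SSS(w)$.

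\emph{Cases (3), (4), (5).} In each of these, the rook of $\SSS$ in row $1$ or column $1$ pins down $b$ uniquely.
\begin{itemize}
\item In case (3), $(1,c) \in \SSS$ forces $c_b = c$. So $b$ must be the row of the $1$ of $w$ in column $c$; this row is at least $2$ because $c \neq 1$.
\item In case (4), $(r,1) \in \SSS$ forces $b = r$.
\item In case (5), both conditions apply: $b = r$ together with $c_r = c$. The latter holds if and only if $(r,c) \in w$, which accounts for the extra factor $x_{(r,c)}$.
\end{itemize}
In each case it then remains to check that, for this particular $b$, $\SSS' \subseteq w_b$ if and only if $\SSS' \subseteq w$. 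Outside rows $1$ and $b$ the two matrices agree, and $\SSS'$ avoids row $1$. So the only issue is a rook of $\SSS'$ in row $b$ (in case (3)) or in column $c_b$ (in case (4)). Such a rook fails in both matrices, by the nonattacking property of $\SSS$ together with the fact that the row-$b$ or column-$c_b$ entry of each matrix lies in the first column or first row, which $\SSS'$ avoids.

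\emph{Main obstacle.} This last verification is the only delicate point. It is where the nonattacking hypothesis on $\SSS$ is used, so I would write it out carefully for case (3) and then note that cases (4) and (5) are symmetric, being essentially transposes of the same argument. Finally, Observation~\ref{obs:b-avoid} guarantees that each $w_b$ lies in $\symm_n(\RRRR)$, so all the evaluations above are legitimate.
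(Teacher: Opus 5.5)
Your proposal is correct and follows essentially the same route as the paper: evaluate $\AAA(x_\SSS)$ pointwise at each $w \in \overline{\symm_n(\RRRR)}$, count the $b$ with $\SSS \subseteq w_b$, and in cases (3)--(5) use the rook in the first row or column to force a unique $b$, then compare $\SSS'$ against $w$ and $w_b$. One simplification: in case (4) there is nothing left to check, since $\SSS'$ avoids both row $1$ and row $r=b$, which are the only rows where $w$ and $w_r$ differ. Your transpose remark is valid, because swapping rows $1$ and $b$ of $w$ produces the same matrix as swapping columns $1$ and $c_b$.
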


\begin{proof}
    Each of the identities (1) -- (5) is proven by showing that both sides coincide as functions $\overline{\symm_n(\RRRR)} \to \F$. 

    (1) Let $w \in \overline{\symm_n(\RRRR)}$. Suppose first that $w$ contains $\SSS$ so that $x_\SSS(w) = 1$. Then $w_b$ contains $\SSS$ if and only if $\SSS$ does not have a rook in row $b$. We compute
    \begin{equation}
        \AAA(x_\SSS)(w) = \sum_{b=2}^n x_\SSS(w_b) = \sum_{\substack{2 \leq b \leq n \\ \SSS \text{ does not contain rooks in row $b$}}} 1  = n - 1 - |\SSS| = (n - 1 - |\SSS|) \cdot x_\SSS(w).
    \end{equation}
    On the other hand, if $w$ does not contain $\SSS$, then $w_b$ does not contain $\SSS$ for $2 \leq b \le n$ and
    \begin{equation}
        \AAA(x_\SSS)(w) = \sum_{b=2}^n x_\SSS(w_b) = \sum_{b=2}^n 0 = 0 = (n-1-|\SSS|) \cdot x_\SSS(w).
    \end{equation}

    (2) Let $w \in \overline{\symm_n(\RRRR)}$. Since $(1,1) \in \SSS$, by Observation~\ref{obs:b-avoid} the permutation matrix $w_b$ does not contain $\SSS$ for $2 \leq b \leq n$ and
    \begin{equation}
        \AAA(x_\SSS)(w) = \sum_{b=2}^n x_\SSS(w_b) = \sum_{b=2}^n 0 = 0.
    \end{equation}

    (3) Let $w \in \overline{\symm_n(\RRRR)}$. Let $2 \leq b_0 \leq n$ be the unique index with $w(c) = b_0$. Since $(1,c) \in \SSS$ we have 
    \begin{equation}
        \AAA(x_\SSS)(w) = \sum_{b=2}^n x_\SSS(w_b) = x_\SSS(w_{b_0}) = x_{\SSS'}(w_{b_0}).
    \end{equation}
    where the final equality uses $w_{b_0}(c) = 1$. Since $(1,c) \in \SSS$ and $2 \leq b_0 \leq n$, we see that $(b_0,c) \notin \SSS$ and 
    \[
    w_{b_0} \text{ contains } \SSS' \quad \Leftrightarrow \quad w \text{ contains } \SSS'.
    \]
    Thus $\AAA(x_\SSS)(w) = x_{\SSS'}(w_{b_0}) = x_{\SSS'}(w).$ 

    (4) Let $w \in \overline{\symm_n(\RRRR)}$. Since $(r,1) \in \SSS$ and $w(1) = 1$ we have
    \begin{equation}
        \AAA(x_\SSS)(w) = \sum_{b=2}^n x_\SSS(w_b) = x_\SSS(w_r) = x_{\SSS'}(w_r).
    \end{equation}
    As in the previous case, one checks that 
    \[
    w_{r} \text{ contains } \SSS' \quad \Leftrightarrow \quad w \text{ contains } \SSS'
    \]
    so that $\AAA(x_\SSS)(w) = x_{\SSS'}(w_r) = x_{\SSS'}(w)$.

    (5) Let $w \in \overline{\symm_n(\RRRR)}$. Since $(r,1) \in \SSS$ we have
    \begin{equation}
        \AAA(x_\SSS)(w) = \sum_{b=2}^n x_\SSS(w_b) = x_\SSS(w_r) = x_{\SSS \setminus \{(r,1)\}}(w_r)
    \end{equation}
    Since $(1,c) \in \SSS$ and $w(1) = 1$ one checks that 
    \[
    w_r \text{ contains } \SSS \setminus \{(r,1)\} \quad \Leftrightarrow \quad
    w \text{ contains } \SSS' = (\SSS \setminus \{(r,1),(1,c)\}) \cup \{(r,c)\}.
    \]
    Therefore $\AAA(x_\SSS)(w) = x_{\SSS'}(w)$.
\end{proof}

Lemma~\ref{lem:a-on-rook-placement} implies that $\AAA: \F[\symm_n(\RRRR)] \to \F[\overline{\symm_n(\RRRR)}]$ behaves well with respect to filtrations.

\begin{lemma}
    \label{lem:a-filtration}
    For any integer $d$, we have $\AAA( F_d \F[\symm_n(\RRRR)]) \subseteq F_d \F[\overline{\symm_n(\RRRR)}].$
\end{lemma}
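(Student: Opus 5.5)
By linearity of $\AAA$, it suffices to check the containment on a spanning set of $F_d \F[\symm_n(\RRRR)]$. Lemma~\ref{lem:board-filtration-span}, applied with $\BBB = \RRRR$, provides one: the images of the squarefree monomials $x_\SSS$, where $\SSS \subseteq [n] \times [n]$ ranges over rook placements with $|\SSS| \leq d$. The plan is therefore to fix such an $\SSS$ and show that $\AAA(x_\SSS)$, computed as a function on $\overline{\symm_n(\RRRR)}$, is the image of a polynomial of degree $\leq d$ in $\F[\overline{\symm_n(\RRRR)}]$.

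Before doing so, I would confirm that evaluating $\AAA$ on the coset of $x_\SSS$ is the same as applying the function-level definition of $\AAA$ to the function $x_\SSS|_{\symm_n(\RRRR)}$. This holds because we identify $S/\II(\symm_n(\RRRR))$ with the algebra of functions on $\symm_n(\RRRR)$. Lemma~\ref{lem:a-on-rook-placement} computes these functions explicitly, and the five cases there are exhaustive for a rook placement $\SSS$. The reason is that $\SSS$ has at most one rook in row $1$ and at most one in column $1$, and a rook at $(1,1)$ occupies both.

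I would then go through the cases one at a time.
\begin{itemize}
\item Case (1): $\AAA(x_\SSS)$ is a scalar multiple of $x_\SSS$, which has degree $|\SSS| \leq d$.
\item Case (2): $\AAA(x_\SSS) = 0$, which lies in every $F_d$.
\item Cases (3) and (4): $\AAA(x_\SSS) = x_{\SSS'}$ with $|\SSS'| = |\SSS| - 1 \leq d$.
\item Case (5): $\AAA(x_\SSS) = x_{\SSS' \cup (r,c)}$, where $|\SSS' \cup (r,c)| = |\SSS| - 2 + 1 \leq d$. Here we do not even need $\SSS' \cup (r,c)$ to be a rook placement, since the product $x_{\SSS'} \cdot x_{r,c}$ is a monomial of degree at most $d$ regardless.
\end{itemize}
In every case $\AAA(x_\SSS)$ is the image of an element of $S_{\leq d}$, hence lies in $F_d \F[\overline{\symm_n(\RRRR)}]$.

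For $d < 0$ both filtration pieces are zero and the containment is trivial. There is no real obstacle beyond this bookkeeping: the substantive work is already done in Lemma~\ref{lem:a-on-rook-placement}, and the only point needing care is the identification of cosets with functions, so that the spanning-set reduction is legitimate.
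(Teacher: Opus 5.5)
Your proposal is correct and follows the paper's proof essentially verbatim: dispose of $d<0$, reduce via the spanning set of Lemma~\ref{lem:board-filtration-span} to the monomials $x_\SSS$ with $|\SSS| \leq d$, and read off from the five cases of Lemma~\ref{lem:a-on-rook-placement} that $\AAA(x_\SSS) \in F_{|\SSS|}\F[\overline{\symm_n(\RRRR)}]$. Your case-by-case degree bookkeeping and your check that the five cases are exhaustive are exactly the details the paper leaves implicit.
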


\begin{proof}
    When $d < 0$ this reads $\AAA(0) \subseteq 0$, so assume $d \geq 0$. By Lemma~\ref{lem:board-filtration-span}, the $\F$-vector space 
    \[
    F_d \F[\symm_n(\RRRR)] = \text{image of $S_{\leq d}$ in } S/\II(\symm_n(\RRRR))
    \]
    is spanned by (the images of) the squarefree rook placement monomials 
    \begin{equation}
        \{ x_\SSS \,:\, \SSS \subseteq [n] \times [n] \text{ is a rook placement and } |\SSS| \leq d \}.
    \end{equation}
    Lemma~\ref{lem:a-on-rook-placement} implies that $\AAA(x_\SSS) \in F_{|\SSS|} \F[\overline{\symm_n(\RRRR)}]$ for any rook placement $\SSS \subseteq [n] \times [n]$ and the proof is complete.
\end{proof}

The reader may be tempted to prove Lemma~\ref{lem:a-filtration} by appealing to the graded action of $\symm_n \times \symm_n$ on $S$ by row and column permutation, thus avoiding the use of Lemma~\ref{lem:a-on-rook-placement}. This argument does not work because the permutation matrix sets $\symm_n(\RRRR)$ and $\overline{\symm_n(\RRRR)}$ and hence their vanishing ideals are not closed under the action of $\symm_n \times \symm_n$. 

The following result is another application of the averaging map $\AAA$ and Lemma~\ref{lem:a-on-rook-placement}. It will be play a crucial role in establishing the injectivity of $\gr \, \EEE: \RRR(\overline{\symm_n(\RRRR)}) \xrightarrow{ \, \, x_{1,1} \cdot (-) \, \, } \RRR(\symm_n(\RRRR)).$

\begin{lemma}
    \label{lem:filtration-intersect}
    For any integer $d$, we have \[\EEE(F_d \F[\overline{\symm_n(\RRRR)}]) = \EEE(\F[\overline{\symm_n(\RRRR)}]) \cap F_{d+1} \F[\symm_n(\RRRR)]\] as subspaces of $\F[\symm_n(\RRRR)]$.
\end{lemma}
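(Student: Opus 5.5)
The inclusion $\subseteq$ follows at once from Lemma~\ref{lem:extension-definition}, since $\EEE(F_d \F[\overline{\symm_n(\RRRR)}]) \subseteq F_{d+1}\F[\symm_n(\RRRR)]$. For the reverse inclusion, I would take $f \in \F[\overline{\symm_n(\RRRR)}]$ with $\EEE(f) \in F_{d+1}\F[\symm_n(\RRRR)]$ and show that $f \in F_d \F[\overline{\symm_n(\RRRR)}]$. The plan is to combine two pieces of information about the same expansion of $\EEE(f)$: restriction to $\overline{\symm_n(\RRRR)}$, which recovers $f$, and the identity $\AAA(\EEE(f)) = 0$ from Lemma~\ref{lem:ea-zero}.

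First I dispose of a trivial range. If $d \geq n-2$, then $F_d \F[\overline{\symm_n(\RRRR)}]$ is the whole ring. Indeed, for $w \in \overline{\symm_n(\RRRR)}$ the product of the $n-2$ variables $x_{w(j),j}$ over $j \in \{2,\dots,n-1\}$ already separates $w$ from every other element of the locus, since $w(1)=1$ and $w(n)$ is then forced. So these products span the space of functions on the locus. Hence I may assume $d \leq n-3$, so that $n-2-d$ is a nonzero integer, invertible because $\F$ has characteristic zero. The case $d<0$ is either vacuous or handled by the same argument below.

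Next, by Lemma~\ref{lem:board-filtration-span} I write $\EEE(f) = \sum_{\SSS} c_\SSS x_\SSS$ in $\F[\symm_n(\RRRR)]$, with the sum over rook placements $|\SSS| \leq d+1$. I split the placements $\SSS$ into the five types (1)--(5) of Lemma~\ref{lem:a-on-rook-placement}.

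Restricting to $\overline{\symm_n(\RRRR)}$ gives $f = \EEE(f)|_{\overline{\symm_n(\RRRR)}}$. On this locus $x_{1,1}=1$ and $x_{1,c} = x_{r,1} = 0$ for $r,c \neq 1$. Therefore
\[
f = \sum_{\SSS \text{ type (1)}} c_\SSS x_\SSS + \sum_{\SSS \text{ type (2)}} c_\SSS x_{\SSS \setminus (1,1)}.
\]
The second sum lies in $F_d$, so it remains to show that $g := \sum_{\text{type (1)}, |\SSS| = d+1} c_\SSS x_\SSS$ lies in $F_d \F[\overline{\symm_n(\RRRR)}]$.

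For this I apply $\AAA$ and use Lemma~\ref{lem:ea-zero} together with Lemma~\ref{lem:a-on-rook-placement}:
\[
0 = \AAA(\EEE(f)) = \sum_{\text{type (1)}} (n-1-|\SSS|) c_\SSS x_\SSS + (\text{terms from types (3),(4),(5)}).
\]
Type (2) placements contribute zero. The type (3), (4) and (5) outputs $x_{\SSS'}$ all have $|\SSS'| = |\SSS|-1 \leq d$, so they lie in $F_d$. The type (1) terms with $|\SSS| \leq d$ also lie in $F_d$. What remains is $(n-2-d)\, g \in F_d \F[\overline{\symm_n(\RRRR)}]$, and dividing by $n-2-d$ finishes the proof.

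The main obstacle, which this plan is designed around, is that restriction alone cannot control the top-degree type (1) terms. The averaging identity is what converts them into lower-degree data. The only delicate points are bookkeeping ones: checking that every non-type-(1) contribution to $\AAA(\EEE(f))$ drops degree, and confirming that the degenerate coefficient $n-2-d=0$ occurs only in the range $d \geq n-2$, where the statement is trivial.
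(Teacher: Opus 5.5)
Your proof is correct and is essentially the paper's argument, which appears as its Case 4: expand in rook monomials, sort them by the five types of Lemma~\ref{lem:a-on-rook-placement}, use $\AAA\circ\EEE=0$ to force $(n-2-d)$ times the top-degree type-(1) part into $F_d$, and restrict to $\overline{\symm_n(\RRRR)}$ to kill types (3)--(5) and lower the degree of type (2). The only differences are in the boundary cases: you absorb $d\le -1$ into the same averaging argument (there the coefficient is $n-1\neq 0$) instead of using the paper's separate constant-function argument, and you handle $d\ge n-2$ by observing that $\prod_{j=2}^{n-1}x_{w(j),j}$ is the indicator of $w$ on $\overline{\symm_n(\RRRR)}$, rather than invoking the top degree of $\RRR(\symm_{n-1})$ via Theorem~\ref{thm:permutation-matrix} and Lemma~\ref{lem:augment-slice}.
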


\begin{proof}
    Our argument breaks into cases depending on the value of $d$. We handle these in ascending order of difficulty.

    {\bf Case 1:} $d < -1$.
    
    The desired equality reads \[\EEE(0) = \EEE(\F[\overline{\symm_n(\RRRR)}]) \cap 0,\]
    which is correct.

    {\bf Case 2:} $d = -1$.

    The left-hand side of the desired equality is
    \[
    \EEE(F_d \F[\overline{\symm_n(\RRRR)}]) = \EEE(F_{-1} \F[\overline{\symm_n(\RRRR)}]) = \EEE(0) = 0.
    \]
    We have
    \[
    F_{d+1} \F[\symm_n(\RRRR)] = F_0 \F[\symm_n(\RRRR)] = \{\text{all constant functions $\symm_n(\RRRR) \to \F$} \}.
    \]
    Since by assumption $n \geq 2$ and $\RRRR = \{(i,i) \,:\, i \in T\}$ for some subset $T \subseteq \{2,\dots,n\}$, there is at least one $w \in \symm_n(\RRRR)$ for which $w(1) \neq 1$ and $\overline{\symm_n(\RRRR)}$ is a {\bf proper} subset of $\symm_n(\RRRR)$. Since $\EEE$ acts on functions by extension by zero, there are no nonzero constant functions in $\EEE(\F[\overline{\symm_n(\RRRR)}])$ and
    \[
        \EEE(\F[\overline{\symm_n(\RRRR)}]) \cap F_{0} \F[\symm_n(\RRRR)] = 0
    \]
    so that $\EEE(F_d \F[\overline{\symm_n(\RRRR)}]) = \EEE(\F[\overline{\symm_n(\RRRR)}]) \cap F_{d+1} \F[\symm_n(\RRRR)]$ as subspaces of $\F[\symm_n(\RRRR)]$ in this case.

    {\bf Case 3:} $d \geq n-2$.

    Since $\symm_n(\RRRR) \subseteq \symm_n$, we have a graded surjection of quotient rings $\RRR(\symm_n) \twoheadrightarrow \RRR(\symm_n(\RRRR))$. Theorem~\ref{thm:permutation-matrix} implies that the top degree of $\RRR(\symm_n)$ is $n-1$, so the top degree of $\RRR(\symm_n(\RRRR))$ is at most $n-1$. Since $d \geq n-2$, Lemma~\ref{lem:associated-graded-isomorphism} implies
    \begin{equation}
    \label{eq:unbar-top}
        F_{d+1} \F[\symm_n(\RRRR)] = \F[\symm_n(\RRRR)]
    \end{equation}
    Since matrices in $\overline{\symm_n(\RRRR)}$ have identical first rows and columns, Lemma~\ref{lem:augment-slice} yields a graded surjection $\RRR(\symm_{n-1}) \twoheadrightarrow \RRR(\overline{\symm_n(\RRRR)})$ so that $\RRR(\overline{\symm_n(\RRRR)})$ has top degree at most $n-2$ by Theorem~\ref{thm:permutation-matrix}. Since $d \geq n-2$ we have
    \begin{equation}
    \label{eq:bar-top}
        F_d \F [\overline{\symm_n(\RRRR)}] = \F[\overline{\symm_n(\RRRR)}].
    \end{equation}
    Thanks to \eqref{eq:unbar-top} and \eqref{eq:bar-top}, the desired equality reads
    \begin{equation}
        \EEE( \F[\overline{\symm_n(\RRRR)}] = \EEE( \F[\overline{\symm_n(\RRRR)}] \cap \F[\symm_n(\RRRR)] \quad \text{as subspaces of $\F[\symm_n(\RRRR)]$}
    \end{equation}
    which is certainly true.

    {\bf Case 4:} $0 \leq d \leq n-3$.

    This case uses the averaging operator $\AAA$. Lemma~\ref{lem:extension-definition} gives the containment
    \begin{equation}
        \EEE(F_d \F[\overline{\symm_n(\RRRR)}]) \subseteq \EEE(\F[\overline{\symm_n(\RRRR)}]) \cap F_{d+1} \F[\symm_n(\RRRR)],
    \end{equation}
    so it suffices to prove the containment $\supseteq$. Let $f: \symm_n(\RRRR) \to \F$ be a function such that
    \begin{itemize}
        \item we have $f \in F_{d+1} \F[\symm_n(\RRRR)]$, and
        \item $f = \EEE(g)$ for some function $g: \overline{\symm_n(\RRRR)}\to \F$.
    \end{itemize}
    The function $g$ is the restriction of $f$ to $\overline{\symm_n(\RRRR)}$.
    We want to show $g \in F_d \F[\overline{\symm_n(\RRRR)}]$. 

    The function $f: \symm_n(\RRRR) \to \F$ is represented by a polynomial in $S$ of degree $\leq d+1$. 
    By Lemma~\ref{lem:board-filtration-span} we may write
    \[
        f = f_{d+1} + f_d + \cdots + f_1 + f_0
    \]
    where for $0 \leq i \leq d+1$ there exist scalars $\gamma_\SSS \in \F$ so that
    \begin{equation}
    \label{eq:f-to-xs}
        f_i = \sum_{|\SSS| = i} \gamma_\SSS \cdot x_\SSS \quad \text{as functions $\symm_n(\RRRR) \to \F$} \quad \quad (0 \leq i \leq d+1)
    \end{equation}
    where the sum ranges over rook placements $\SSS \subseteq [n] \times [n]$ with $i$ rooks. The expansions \eqref{eq:f-to-xs} are not unique.
    
    We have $f- f_{d+1} \in F_d \F[\symm_n(\RRRR)]$. Lemma~\ref{lem:a-filtration} implies
    \begin{equation}
    \label{eq:difference-filter}
        \AAA(f - f_{d+1}) \in F_d \F[\overline{\symm_n(\RRRR)}].
    \end{equation}
    Since $f = \EEE(g)$, Lemma~\ref{lem:ea-zero} yields
    \begin{equation}
    \label{eq:ae-vanish}
        \AAA(f) = \AAA(\EEE(g)) = 0.
    \end{equation}
    Since $\AAA$ is linear, \eqref{eq:difference-filter} and \eqref{eq:ae-vanish} imply
    \begin{equation}
        \label{eq:top-filter}
        \AAA(f_{d+1}) \in F_d \F[\overline{\symm_n(\RRRR)}].
    \end{equation}

    Consider the expansion \eqref{eq:f-to-xs} when $i = d+1$. We group the rook placements $\SSS \subseteq [n] \times [n]$ appearing on the right-hand side of \eqref{eq:f-to-xs} according to the five cases of Lemma~\ref{lem:a-on-rook-placement}. 
    In particular, set 
    \[
        T := \{ \text{rook placements } \SSS \subseteq [n] \times [n] \,:\, \SSS \text{ has $d+1$ rooks} \}.
    \]
    We have a disjoint union decomposition
    \[
    T = T_1 \sqcup T_2 \sqcup T_3 \sqcup T_4 \sqcup T_5
    \]
    where
     \begin{align*}
        T_1 &:= \{ \SSS \in T \,:\,  \SSS \text{ does not meet the first row or column} \}, \\
        T_2 &:= \{ \SSS \in T \,:\, \SSS \text{ contains $(1,1)$} \}, \\
        T_3 &:= \{ \SSS \in T \,:\, \SSS \text{ meets the first row but not the first column} \}, \\
        T_4 &:= \{ \SSS \in T \,:\, \SSS \text{ meets the first column but not the first row} \}, \\
        T_5 &:= \{ \SSS \in T \,:\, \SSS \text{ meets both the first row and the first column but does not contain $(1,1)$} \}.
    \end{align*}
    We have
    \begin{equation}
        \label{eq:f-to-h}
        f_{d+1} = h_1 + h_2 + h_3 +h_4 + h_5 \quad \text{as functions $\symm_n(\RRRR) \to \F$}
    \end{equation}
    where for $1 \leq i \leq 5$, the function $h_i: \symm_n(\RRRR) \to \F$
    is given by
    \begin{equation}
        \label{eq:formula-for-h}
        h_i := \sum_{\SSS \in T_i} \gamma_{\SSS} \cdot x_{\SSS}
    \end{equation}
    Lemma~\ref{lem:a-on-rook-placement} implies 
    \begin{equation}
        \label{eq:a-on-1}
        \AAA(h_1) = (n - 2 - d) \cdot h_1 \quad \text{as functions $\overline{\symm_n(\RRRR)} \to \F$}
    \end{equation}
    and
    \begin{equation}
    \label{eq:a-on-25}
        \AAA(h_2 + h_3 + h_4 + h_5) = \AAA(h_2) + \AAA(h_3) + \AAA(h_4) + \AAA(h_5) \in F_d \F[\overline{\symm_n(\RRRR)}].
    \end{equation}
    Combining \eqref{eq:top-filter} and \eqref{eq:a-on-25} implies $\AAA(h_1) \in F_d \F[\overline{\symm_n(\RRRR)}]$. Since $0 \leq d \leq n-3$, we have $n-2-d \neq 0$ and \eqref{eq:a-on-1} implies 
    \begin{equation}
        \label{eq:1-filtration}
        h_1 \in F_d \F[\overline{\symm_n(\RRRR)}].
    \end{equation}
    The functions $h_3,h_4,h_5: \symm_n(\RRRR) \to \F$ are linear combinations of $x_\SSS$ for rook placements $\SSS$ involving rooks in the first row and column $\neq (1,1)$. The functions $h_3, h_4, h_5$ therefore vanish upon restriction to $\overline{\symm_n(\RRRR)}$.
    We have the identity
    \begin{align}
        g  &= (f - f_{d+1}) + f_{d+1}  &\text{as functions $\overline{\symm_n(\RRRR)} \to \F$} \\
        &= (f - f_{d+1}) + h_1 + h_2 + h_3 + h_4 + h_5  &\text{as functions $\overline{\symm_n(\RRRR)} \to \F$} \\
        &= (f - f_{d+1}) + h_1 + h_2 &\text{as functions $\overline{\symm_n(\RRRR)} \to \F$}
    \end{align}
    of functions $\overline{\symm_n(\RRRR)} \to \F.$ 
    Since $x_{1,1}$ is the constant function 1 on $\overline{\symm_n(\RRRR)}$, we have
    \begin{equation}
        h_2 = \sum_{\substack{|\SSS| = d+1 \\ (1,1) \in \SSS}} \gamma_\SSS \cdot x_\SSS = \sum_{\substack{|\SSS| = d+1 \\ (1,1) \in \SSS}}  \gamma_\SSS \cdot x_{\SSS \setminus(1,1)} \quad 
        \text{as functions $\overline{\symm_n(\RRRR)} \to \F$}
    \end{equation}
    so $h_2 \in F_d \F[\overline{\symm_n(\RRRR)}]$.
    Since $f - f_{d+1} \in F_d \F[\overline{\symm_n(\RRRR)}]$, the membership \eqref{eq:1-filtration} implies that $g \in F_d \F[\overline{\symm_n(\RRRR)}]$ and the proof is complete.
\end{proof}

The assumption that $\F$ has characteristic zero was used for the first time in the above proof to yield $n-2-d \neq 0$ for $0 \leq d \leq n-3$.
We use Lemma~\ref{lem:filtration-intersect} to prove that the associated graded of the extension map $\EEE: \F[\overline{\symm_n(\RRRR)}] \to \F[\symm_n(\RRRR)]$ is injective. The injection $\gr \, \EEE$ the first part of our deletion-contraction short exact sequence.

\begin{lemma}
    \label{lem:gr-e-injective}
    The $S$-module homomorphism $\gr \, \EEE: \RRR(\overline{\symm_n(\RRRR)}) \xrightarrow{\, \, x_{1,1} \cdot(-) \, \, } \RRR(\symm_n(\RRRR))$
    induced by multiplication by $x_{1,1}$ is injective.
\end{lemma}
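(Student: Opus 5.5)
We will work entirely on the filtered side: by Lemma~\ref{lem:associated-graded-isomorphism}, $\RRR(\overline{\symm_n(\RRRR)})$ and $\RRR(\symm_n(\RRRR))$ are identified with $\gr \, \F[\overline{\symm_n(\RRRR)}]$ and $\gr \, \F[\symm_n(\RRRR)]$. Under this identification, multiplication by $x_{1,1}$ becomes the associated graded map $\gr \, \EEE$ of the degree-one filtered map $\EEE$ from Lemma~\ref{lem:extension-definition}. Both isomorphisms send a homogeneous $f \in S_d$ to the class of $f$ in $F_d/F_{d-1}$, so the two descriptions agree degree by degree. It therefore suffices to show that $\gr \, \EEE$ is injective in each homogeneous degree $d$.

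Fix $d$ and a class $g + F_{d-1}\F[\overline{\symm_n(\RRRR)}]$ with $g \in F_d \F[\overline{\symm_n(\RRRR)}]$. Suppose its image vanishes, i.e.
\[
\EEE(g) \in F_{d} \F[\symm_n(\RRRR)] = F_{(d-1)+1}\F[\symm_n(\RRRR)].
\]
Then $\EEE(g)$ lies in $\EEE(\F[\overline{\symm_n(\RRRR)}]) \cap F_{(d-1)+1}\F[\symm_n(\RRRR)]$. Applying Lemma~\ref{lem:filtration-intersect} with $d-1$ in place of $d$, this intersection equals $\EEE(F_{d-1}\F[\overline{\symm_n(\RRRR)}])$. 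Hence $\EEE(g) = \EEE(g')$ for some $g' \in F_{d-1}\F[\overline{\symm_n(\RRRR)}]$. Since $\EEE$ is extension by zero on functions, it is injective, so $g = g' \in F_{d-1}$. Thus the class of $g$ is zero, and $\gr \, \EEE$ is injective in degree $d$.

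The only real content is Lemma~\ref{lem:filtration-intersect}, which is already established. The main point needing care is the compatibility check in the first paragraph: the abstract $\gr\,\EEE$ on associated graded algebras must coincide with the map ``multiplication by $x_{1,1}$'' between the quotient rings $S/\gr\,\II(\cdot)$. This holds because both are induced by the same polynomial multiplication on representatives of top-degree components. The edge cases $d \le 0$ are already covered by the case analysis inside Lemma~\ref{lem:filtration-intersect}, which holds for every integer $d$.
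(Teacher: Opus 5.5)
Your proposal is correct and follows the paper's proof essentially verbatim. You identify $\gr\,\EEE$ with the associated graded map of the degree-one filtered map $\EEE$, observe that $g + F_{d-1}$ lies in the kernel exactly when $\EEE(g) \in F_d$, and then combine Lemma~\ref{lem:filtration-intersect} (applied at index $d-1$, which you make explicit where the paper leaves it implicit) with the injectivity of extension by zero to conclude $g \in F_{d-1}$.
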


\begin{proof}
    Lemma~\ref{lem:associated-graded-isomorphism} and Lemma~\ref{lem:extension-definition} identify $\gr \, \EEE$ with the morphism
    \begin{equation}
        \gr \, \EEE: \gr \, \F[\overline{\symm_n(\RRRR)}] \longrightarrow \gr \, \F[\symm_n(\RRRR)]
    \end{equation}
    where the filtrations on $\F[\overline{\symm_n(\RRRR)}]$ and $\F[\symm_n(\RRRR)]$ are those of Lemma~\ref{lem:filtration-intersect}. Recalling that $\gr \, \EEE$ is homogeneous of degree 1, we have
    \begin{equation}
        \ker( \gr \, \EEE) = \bigoplus_{d \geq 0} \ker(\gr \, \EEE)_d
    \end{equation}
    where
    \begin{equation}
        \ker(\gr \, \EEE)_d = \{ f + F_{d-1} \F[\overline{\symm_n(\RRRR)}] \,:\, f \in F_d \F[\overline{\symm_n(\RRRR)}] \text{ and } \EEE(f) \in F_d \F[\symm_n(\RRRR)] \}.
    \end{equation}
    Given $f \in F_d \F[\overline{\symm_n(\RRRR)}]$ so that $f + F_{d-1}\F[\overline{\symm_n(\RRRR)}] \in \ker(\gr \, \EEE)_d$, by Lemma~\ref{lem:filtration-intersect} there exists $g \in F_{d-1} \F[\overline{\symm_n(\RRRR)}]$ so that $\EEE(f) = \EEE(g)$. But $\EEE: \F[\overline{\symm_n(\RRRR)}] \to \F[\symm_n(\RRRR)]$ is injective, so we have $f=g$ and 
    \[
    f + F_{d-1}\F[\overline{\symm_n(\RRRR)}] = g + F_{d-1}\F[\overline{\symm_n(\RRRR)}] = 0 \quad \text{in } \gr \, \F[\overline{\symm_n(\RRRR)}].
    \]
    We conclude that $\ker(\gr \, \EEE) = 0$ and that $\gr \, \EEE$ is injective.
\end{proof}

The last result of this section phrases Lemma~\ref{lem:gr-e-injective} in a more convenient form. Let 
\begin{equation}
    \overline{S} := \F[x_{i,j} \,:\, 2 \leq i,j \leq n]
\end{equation}
be the subring of $S$ formed by setting the variables in the first row and column to zero and regard $\overline{S}$ as the coordinate ring of the affine space $\F^{(n-1) \times (n-1)}$ of matrices with rows and columns indexed by $\{2,\dots,n\} \times \{2,\dots,n\}$. Since $\RRRR \subseteq \{2,\dots,n\}\times\{2,\dots,n\}$, we have a locus $\symm_{n-1}(\RRRR) \subseteq \F^{(n-1) \times (n-1)}$ and a graded orbit harmonics quotient $\RRR(\symm_{n-1}(\RRRR))$ of $\overline{S}$. Multiplication by the variable $x_{1,1}$ gives an $\overline{S}$-module homomorphism $x_{1,1} \cdot (-): \overline{S} \to S$.

\begin{lemma}
    \label{lem:x11-injection}
    Let $n \geq 2$ and $\RRRR \subseteq \{ (2,2), (3,3) , \dots, (n,n) \}$. Multiplication by $x_{1,1}$ induces an injection of $\overline{S}$-modules
    \[
    \RRR(\symm_{n-1}(\RRRR)) \xrightarrow{\, \, x_{1,1} \cdot (-) \, \, } \RRR(\symm_n(\RRRR)).
    \]
\end{lemma}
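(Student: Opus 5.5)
The plan is to deduce this from Lemma~\ref{lem:gr-e-injective} by identifying $\RRR(\overline{\symm_n(\RRRR)})$ with $\RRR(\symm_{n-1}(\RRRR))$ via Lemma~\ref{lem:augment-slice}. First, write $\RRRR = \{(i,i) : i \in T\}$ with $T \subseteq \{2,\dots,n\}$, so we are in the setting of the Notation of this section. The locus $\overline{\symm_n(\RRRR)} \subseteq \F^{n \times n}$ consists of matrices whose first row and column are $(1,0,\dots,0)$, and whose lower-right $(n-1)\times(n-1)$ block ranges over $\symm_{n-1}(\RRRR) \subseteq \F^{(n-1)\times(n-1)}$. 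In other words, after reordering coordinates,
\[
\overline{\symm_n(\RRRR)} = \symm_{n-1}(\RRRR) \times (1,0,\dots,0),
\]
where the extra coordinates are $x_{1,1}, x_{1,j}, x_{i,1}$ for $2 \leq i,j \leq n$.

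Applying Lemma~\ref{lem:augment-slice} once for each of these $2n-1$ extra coordinates, we find that the inclusion $\overline{S} \hookrightarrow S$ induces a graded isomorphism
\[
\RRR(\symm_{n-1}(\RRRR)) \xrightarrow{\, \sim \,} \RRR(\overline{\symm_n(\RRRR)}).
\]
It is also an isomorphism of $\overline{S}$-modules, since it comes from a ring map restricting to the identity on $\overline{S}$. Concretely,
\[
\gr \, \II(\overline{\symm_n(\RRRR)}) = \gr \, \II(\symm_{n-1}(\RRRR)) \cdot S + (x_{1,1}, x_{1,j}, x_{i,1} : 2 \leq i,j \leq n).
\]

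Next we compose this isomorphism with the injection $\gr\,\EEE$ of Lemma~\ref{lem:gr-e-injective}, which is induced by multiplication by $x_{1,1}$. The composite
\[
\RRR(\symm_{n-1}(\RRRR)) \to \RRR(\overline{\symm_n(\RRRR)}) \to \RRR(\symm_n(\RRRR))
\]
sends the class of $f \in \overline{S}$ to the class of $x_{1,1} f$. It is therefore the map in the statement, and it is injective and $\overline{S}$-linear as a composition of an isomorphism and an injection.

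The one point requiring care is that Lemma~\ref{lem:gr-e-injective} identifies $\gr\,\EEE$ with the map on $S/\gr\,\II$ rings that is literally multiplication by $x_{1,1}$ on polynomial representatives. This follows from Lemma~\ref{lem:associated-graded-isomorphism}: a homogeneous $f$ of degree $d$ maps to $x_{1,1}f$, whose class in $F_{d+1}/F_d$ is $\gr\,\EEE$ applied to the class of $f$. Beyond this bookkeeping, and checking that the standing hypothesis $n \geq 2$ matches (including $n=2$, where $\symm_1(\RRRR)$ may be empty and both sides are trivially handled), the argument is formal, so I expect no serious obstacle.
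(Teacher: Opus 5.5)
Your proposal is correct and is essentially the paper's proof. The paper also uses Lemma~\ref{lem:augment-slice} to identify $\RRR(\symm_{n-1}(\RRRR))$ with $\RRR(\overline{\symm_n(\RRRR)})$ via the inclusion $\overline{S} \hookrightarrow S$, and then composes with the injection $\gr \, \EEE$ of Lemma~\ref{lem:gr-e-injective} induced by multiplication by $x_{1,1}$; your extra bookkeeping (iterating the slice lemma over the $2n-1$ frozen coordinates, checking that $\gr\,\EEE$ is multiplication by $x_{1,1}$ on representatives, and the empty-locus case at $n=2$) only makes explicit what the paper leaves implicit.
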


\begin{proof}
    The $n \times n$ matrices in the locus $\overline{\symm_n(\RRRR)}$ are obtained from the $(n-1) \times (n-1)$ in $\symm_{n-1}(\RRRR)$ by appending the map
    \[
    A \mapsto (1) \oplus A \quad \text{(matrix direct sum)}.
    \]
    Lemma~\ref{lem:augment-slice} gives an isomorphism $\RRR(\symm_{n-1}(\RRRR)) \xrightarrow{ \, \sim \, } \RRR(\overline{\symm_{n}(\RRRR)})$ induced by the inclusion $\overline{S} \hookrightarrow S$. We have the composite map $\RRR(\symm_{n-1}(\RRRR)) \to \RRR(\overline{\symm_n(\RRRR)}) \to \RRR(\symm_n(\RRRR))$ given by
    \[
    f + \gr \, \II(\symm_{n-1}(\RRRR)) \xmapsto{\, \sim \, } f + \gr \, \II(\overline{\symm_n(\RRRR)}) \xmapsto{  \, \, x_{1,1} \cdot (-) \, \, } x_{1,1} \cdot f + \gr \, \II(\symm_n(\RRRR)).
    \]
    where the second map is the injection of Lemma~\ref{lem:gr-e-injective}  induced by multiplication by $x_{1,1}$.
\end{proof}

\section{Hilbert series and presentation}
\label{sec:Hilbert}

Let $\RRRR \subseteq [n] \times [n]$ be a rook placement. The first two goals of this section are to give a generating set of the defining ideal $\gr \, \II(\symm_n(\RRRR))$ of $\RRR(\symm_n(\RRRR))$ and exhibit a deletion-contraction short exact sequence relating the rings $\RRR(\symm_n(\RRRR))$ for varying $\RRRR$. These goals will be achieved simultaneously using an inductive argument.

\subsection{Ideal generators and a deletion-contraction short exact sequence} 
Our starting point is a lower bound for the orbit harmonics ideal $\gr \, \II(\symm_n(\RRRR))$.

\begin{lemma}
    \label{lem:ideal-containment}
    Let $\RRRR \subseteq [n] \times [n]$ be a rook placement. We have
    \begin{equation}
        I_n + (x_{i,j} \,:\, (i,j) \in \RRRR) \subseteq \gr \, \II(\symm_n(\RRRR))
    \end{equation}
    as ideals in $S$.
\end{lemma}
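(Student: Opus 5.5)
Since $\gr \, \II(\symm_n(\RRRR))$ is an ideal, it suffices to show that each generator of the left-hand side lies in it. For each generator we will exhibit a polynomial $f \in \II(\symm_n(\RRRR))$ with $\tau(f)$ equal to that generator. All the inhomogeneous polynomials needed are already on hand from Lemma~\ref{lem:arbitrary-board-generators}, applied with the board $\BBB = \RRRR$. That lemma describes $\II(\symm_n(\RRRR))$ (the avoidance convention there must be matched with the one defining $\symm_n(\RRRR)$, transposing if necessary). In any case, each relevant polynomial visibly vanishes on every permutation matrix avoiding $\RRRR$, so membership can also be checked directly.

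The generators are handled type by type.
\begin{enumerate}
\item \emph{Row sums.} The polynomial $x_{i,1} + \cdots + x_{i,n} - 1$ vanishes on every permutation matrix, hence on $\symm_n(\RRRR)$. Its top-degree component is $x_{i,1} + \cdots + x_{i,n}$, so this row sum lies in $\gr \, \II(\symm_n(\RRRR))$. Column sums are handled identically.
\item \emph{Distinct positions in a line.} For $j \neq j'$, the product $x_{i,j} x_{i,j'}$ vanishes on all permutation matrices. It is homogeneous, so it equals its own top component. The same holds for $x_{i,j} x_{i',j}$ with $i \neq i'$.
\item \emph{Squares.} For $j = j'$ (or $i = i'$) the generator is $x_{i,j}^2$. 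Here we use $x_{i,j}^2 - x_{i,j} \in \II(\symm_n(\RRRR))$, which holds because entries of permutation matrices are $0$ or $1$. Its top component is $x_{i,j}^2$.
\item \emph{Rook positions.} For $(i,j) \in \RRRR$, the variable $x_{i,j}$ vanishes on $\symm_n(\RRRR)$ by definition of that locus, and it is homogeneous.
\end{enumerate}

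There is no real obstacle. The only point needing care is that Definition~\ref{def:I-ideal-definition} allows $j = j'$ in the product generators, so the squares $x_{i,j}^2$ are among them. These squares are not themselves in the vanishing ideal and must be obtained as top components of $x_{i,j}^2 - x_{i,j}$.

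The edge case where $\symm_n(\RRRR)$ is empty is harmless. Then $\II(\symm_n(\RRRR)) = S$ contains $1$, so $\gr \, \II(\symm_n(\RRRR)) = S$ and the containment holds trivially.
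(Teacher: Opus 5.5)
Your proof is correct, but it takes a more elementary route than the paper. The paper argues in one line. Since $\symm_n(\RRRR) \subseteq \symm_n$, we have $\II(\symm_n) \subseteq \II(\symm_n(\RRRR))$, hence $\gr \, \II(\symm_n) \subseteq \gr \, \II(\symm_n(\RRRR))$. Theorem~\ref{thm:permutation-matrix} identifies $\gr \, \II(\symm_n)$ with $I_n$, and the rook variables are added exactly as in your step (4).

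You instead exhibit each generator of $I_n$ directly as the top component of an explicit polynomial vanishing on all permutation matrices. This shows that only the easy inclusion $I_n \subseteq \gr \, \II(\symm_n)$ of the cited theorem is needed, so your argument is self-contained and avoids the nontrivial equality $I_n = \gr \, \II(\symm_n)$. The paper's version is shorter only because it outsources that inclusion. Your handling of the squares $x_{i,j}^2$ via $x_{i,j}^2 - x_{i,j}$ is exactly the point that the paper's citation hides. Your empty-locus remark is unnecessary, since every polynomial vanishes on the empty set, but it is harmless.
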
 

\begin{proof}
    Since $\symm_n(\RRRR) \subseteq \symm_n$, Theorem~\ref{thm:permutation-matrix} implies $I_n = \gr \, \II(\symm_n) \subseteq \gr \, \II(\symm_n(\RRRR))$. Furthermore, for any $(i,j) \in \RRRR$ we have $x_{i,j} \in \II(\symm_n(\RRRR))$ so that $(i,j) \in \gr \, \II(\symm_n(\RRRR))$.
\end{proof}

The containment of ideals in Lemma~\ref{lem:ideal-containment} is not always an equality. For example, if $n  = 1$ and $\RRRR = \{(1,1)\}$ we have $\RRR(\symm_1(\RRRR)) = \RRR(\DDD_1) = 0$ so that $\gr \, \II(\symm_n(\RRRR)) = \gr \, \II(\DDD_1) = S$ whereas $I_1 + (x_{1,1}) = (x_{1,1}) \subsetneq S$. Our first main result implies that this is  the only case where the containment in Lemma~\ref{lem:ideal-containment} is not an equality. For $1 \leq i_0, j_0 \leq n$, we write
\begin{equation}
    S^{(i_0,j_0)} := \F[x_{i,j} \,:\, 1 \leq i,j \leq n, \, \, i \neq i_0 \text{ and } j \neq j_0]
\end{equation}
for the subring of $S$ obtained by removing the variables in row $i_0$ and column $j_0$. If $\RRRR$ is a rook placement on the board $[n-1] \times [n-1]$, let $\RRR(\symm_{n-1}(\RRRR))^{(i_0,j_0)}$ be the orbit harmonics quotient of the ring $S^{(i_0,j_0)}$ obtained by regarding $\RRRR$ as a rook placement on $\{1 ,\dots, \widehat{i_0}, \dots, n\} \times \{1, \dots, \widehat{j_0} , \dots, n \}$.

\begin{theorem}
    \label{thm:ses-presentation}
    Let $\RRRR \subseteq [n] \times [n]$ be a rook placement.
    \begin{enumerate}
        \item 
        Suppose $n \geq 2$, the rook placement $\RRRR$ is nonempty, and let $(i_0,j_0) \in \RRRR$ be a rook.
        Let $\RRRR \setminus (i_0,j_0)$ be the deletion and $\RRRR / (i_0,j_0)$ be the contraction.
        We have a short exact sequence of $S^{(i_0,j_0)}$-modules 
        \[
        0 \to \RRR(\symm_{n-1}(\RRRR / (i_0,j_0)))^{(i_0,j_0)} \xrightarrow {\, \, \varphi \, \, } \RRR(\symm_n(\RRRR \setminus (i_0,j_0))) \xrightarrow{\, \, \pi \, \, } \RRR(\symm_n(\RRRR)) \to 0.
        \]
        Here $\varphi: \RRR(\symm_{n-1}(\RRRR / (i_0,j_0)))^{(i_0,j_0)} \to \RRR(\symm_n(\RRRR \setminus (i_0,j_0)))$ is induced by multiplication by $x_{i_0,j_0}$ and $\pi: \RRR(\symm_n(\RRRR \setminus (i_0,j_0))) \to \RRR(\symm_n(\RRRR))$ is the canonical projection.
        \item For $n \neq 1$ or when $\RRRR = \varnothing$, the ideal $\gr \, \II(\symm_n(\RRRR)) \subseteq S$ is given by  
        \[
        \gr \, \II(\symm_n(\RRRR)) = I_n + (x_{i,j} \,:\, (i,j) \in \RRRR).
        \]
        When $n = 1$ and $\RRRR = \{ (1,1) \}$ we have $\gr \, \II(\symm_1(\RRRR)) = \gr \, \II(\DDD_1) = S$.
    \end{enumerate}
\end{theorem}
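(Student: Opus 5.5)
We prove (1) and (2) together by induction on $|\RRRR|$ (with $n$ allowed to vary). The base case $\RRRR = \varnothing$ of (2) is Theorem~\ref{thm:permutation-matrix}. The case $n = 1$, $\RRRR = \{(1,1)\}$ is immediate since $\DDD_1 = \varnothing$. The plan is to prove (1) for a given $\RRRR$ using (2) for the smaller placements $\RRRR \setminus (i_0,j_0)$ and $\RRRR/(i_0,j_0)$, and then to deduce (2) for $\RRRR$ from (1) by a dimension count.

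First we reduce to a normal form. Permuting rows and columns simultaneously via the $\symm_n \times \symm_n$ action preserves $I_n$ and carries the ideal of variables indexed by $\RRRR$ to the ideal of the image placement. It also carries loci $\symm_n(\RRRR)$ to loci of the same type, and it commutes with $\gr$ because the action is linear. So we may assume $(i_0,j_0) = (1,1)$ and $\RRRR = \{(i,i) : i \in T \cup \{1\}\}$ with $T \subseteq \{2,\dots,n\}$. Here the row permutation moves the rooks onto the diagonal and the column permutation is chosen compatibly. Set $\RRRR' := \RRRR \setminus (1,1) = \{(i,i): i\in T\}$; then $\RRRR/(1,1)$ is $\RRRR'$ viewed on $\{2,\dots,n\}^2$. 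Lemma~\ref{lem:x11-injection} gives the injection $\varphi$ from $\RRR(\symm_{n-1}(\RRRR'))$ into $\RRR(\symm_n(\RRRR'))$ induced by multiplication by $x_{1,1}$. We have $\symm_n(\RRRR) \subseteq \symm_n(\RRRR')$, so $\gr\,\II(\symm_n(\RRRR')) \subseteq \gr\,\II(\symm_n(\RRRR))$ and $\pi$ is a well-defined surjection.

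Next we establish exactness in the middle, $\ker \pi = \operatorname{im}\varphi$. The composite $\pi\circ\varphi = 0$ because $x_{1,1} \in \gr\,\II(\symm_n(\RRRR))$. For the reverse containment we avoid a direct computation of $\ker\pi$ and argue by dimensions. We have $\dim \RRR(\symm_{n-1}(\RRRR')) = |\symm_{n-1}(\RRRR')|$ and $\dim \RRR(\symm_n(\RRRR')) = |\symm_n(\RRRR')|$, and the deletion-contraction identity \eqref{eq:deletion-contraction-combinatorial} gives
\[
\dim \operatorname{coker}\varphi = |\symm_n(\RRRR)| = \dim \RRR(\symm_n(\RRRR)).
\]
Hence the surjection $\operatorname{coker}\varphi \twoheadrightarrow \RRR(\symm_n(\RRRR))$ induced by $\pi$ is an isomorphism, and the sequence is exact. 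This part uses only Theorem~\ref{thm:orbit-harmonics} and needs no induction. The boundary case $n = 2$ is covered as long as $\RRR(\symm_1(\cdot))$ is interpreted correctly.

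Finally we prove (2). Put $J := I_n + (x_{i,j} : (i,j) \in \RRRR)$; Lemma~\ref{lem:ideal-containment} gives $J \subseteq \gr\,\II(\symm_n(\RRRR))$. By induction, $\gr\,\II(\symm_n(\RRRR')) = J' := I_n + (x_{i,j} : (i,j)\in \RRRR')$, and $J = J' + (x_{1,1})$. Therefore
\[
S/J = \operatorname{coker}\bigl(x_{1,1}\cdot(-) : S/J' \to S/J'\bigr),
\]
and the image of this multiplication map coincides with the image of $\varphi$. The point is that $x_{1,1}$ times any variable in row $1$ or column $1$ lies in $I_n$, so $x_{1,1}\cdot S$ modulo $J'$ equals $x_{1,1}\cdot \overline{S}$ modulo $J'$. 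Consequently
\[
\dim S/J = \dim \operatorname{coker}\varphi = |\symm_n(\RRRR)|,
\]
and combined with the containment $J \subseteq \gr\,\II(\symm_n(\RRRR))$ this forces equality. The main obstacle is already handled by Lemma~\ref{lem:x11-injection}. What remains is the bookkeeping of the normal-form reduction and of the small cases: when $n = 2$ and the contraction becomes $\DDD_1$, both the dimension formulas and the identification of images should be checked directly.
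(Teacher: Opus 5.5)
Your proposal is correct and follows essentially the same route as the paper: $\varphi$ is injective by Lemma~\ref{lem:x11-injection}, the middle of the sequence is exact by the deletion--contraction dimension count, and (2) follows from that exactness together with the inductive presentation of $\gr \, \II(\symm_n(\RRRR \setminus (i_0,j_0)))$. Beyond that, you make the $\symm_n \times \symm_n$ reduction to $(i_0,j_0) = (1,1)$ with diagonal rooks explicit, which the paper leaves implicit, and you observe that exactness in (1) needs no induction, which makes the paper's separate $n = 2$ base case and its unused contraction presentation unnecessary; both are valid streamlinings.
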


\begin{proof}
    We establish both (1) and (2) simultaneously by induction on $n$ and $|\RRRR|$. The relevant base cases are determined by the hypotheses of Lemma~\ref{lem:x11-injection}.
    
    When $\RRRR = \varnothing$, the ideal presentation (2) holds by Theorem~\ref{thm:permutation-matrix}. When $n = 1$ and $\RRRR = \{(1,1)\}$ we have $\symm_1(\RRRR) = \DDD_1 = \varnothing$ so that $\gr \, \II(\symm_1(\RRRR)) = S$ and (2) holds, as well. When $n = 2$, one quickly checks that
    \begin{equation}
        \dim_\F S/(I_2 + (x_{i,j} \,:\, (i,j) \in \RRRR)) = \begin{cases}
            2 & \RRRR = \varnothing \\
            1 & \RRRR \neq \varnothing
        \end{cases}
    \end{equation}
    so that $\dim_\F S/(I_2 + (x_{i,j} \,:\, (i,j) \in \RRRR)) = |\symm_2(\RRRR)|$ in either case and (2) holds  by Lemma~\ref{lem:ideal-containment}.
    When $n = 2$ and $|\RRRR| = 1$, the sequence of maps in (1) is isomorphic to
    \begin{equation}
        0 \to \F \xrightarrow{ \, \, x \cdot (-) \, \, } \F[x]/(x^2) \xrightarrow{ \, \, \mathrm{can.} \, \, } \F[x]/(x) \to 0
    \end{equation}
    which is exact. When $n = 2$ and $|\RRRR| = 2$, the sequence of maps in (1) is 
    \begin{equation}
        0 \to 0 \to \F \xrightarrow{\, \, \mathrm{id.} \, \, } \F \to 0
    \end{equation}
    which is also exact.

    Let $n \geq 3$ and let $\RRRR \subseteq [n[ \times [n]$ be a nonempty rook placement with $(i_0,j_0) \in \RRRR$. Inductively assume that (2) holds for all $(n',\RRRR')$ with $n' < n$ and all $(n,\RRRR'')$ with $|\RRRR''| < |\RRRR|$. In particular, since $n \geq 3$ we have 
    \begin{equation}
    \label{eq:contraction-presentation}
        \RRR(\symm_{n-1}(\RRRR / (i_0,j_0)))^{(i_0,j_0)} = S^{(i_0,j_0)}/(I_{n-1}^{(i_0,j_0)} + (x_{i,j} \,:\, (i,j) \in \RRRR / (i_0,j_0))
    \end{equation}
    and also
    \begin{equation}
    \label{eq:deletion-presentation}
        \RRR(\symm_{n}(\RRRR \setminus (i_0,j_0))) = S/(I_n + (x_{i,j} \,:\, (i,j) \in \RRRR \setminus (i_0,j_0)).
    \end{equation}
     Lemma~\ref{lem:x11-injection} yields an injection of $S^{(i_0,j_0)}$-modules 
    \begin{equation}
        \varphi: \RRR(\symm_{n-1}(\RRRR / (i_0,j_0)))^{(i_0,j_0)} \xrightarrow{\, \, x_{i_0,j_0} \cdot (-) \, \, } \RRR(\symm_n(\RRRR \setminus (i_0,j_0)))
    \end{equation}
    induced by multiplication by $x_{i_0,j_0}$. The containment of loci $\symm_n(\RRRR) \subseteq \symm_n(\RRRR \setminus(i_0,j_0))$ gives rise to a canonical surjection
    \begin{equation}
        \pi: \RRR(\symm_n(\RRRR \setminus(i_0,j_0))) \xrightarrow{ \, \, \mathrm{can.} \, \, } \RRR(\symm_n(\RRRR)).
    \end{equation}
    Lemma~\ref{lem:ideal-containment} gives the containment of ideals
    \begin{equation}
        I_n + (x_{i,j} \,:\, (i,j) \in \RRRR) \subseteq \gr \, \II(\symm_n(\RRRR)).
    \end{equation}
    Since $(i_0,j_0) \in \RRRR$ we have $x_{i_0,j_0} \in \ker(\pi)$ so that $\pi \circ \varphi = 0$ and we have a complex 
    \begin{equation}
    \label{eq:goal-ses}
        0 \to \RRR(\symm_{n-1}(\RRRR / (i_0,j_0)))^{(i_0,j_0)} \xrightarrow {\, \, \varphi \, \, } \RRR(\symm_n(\RRRR \setminus (i_0,j_0))) \xrightarrow{\, \, \pi \, \, } \RRR(\symm_n(\RRRR)) \to 0
    \end{equation}
    which is exact on the left and on the right. Since 
    $\dim_\F \RRR(\Zpoints) = |\Zpoints|$ for any locus $\Zpoints$ and 
    \begin{equation}
        | \symm_n( \RRRR \setminus(i_0,j_0)))| = | \symm_{n-1}(\RRRR / (i_0,j_0)))| + |\symm_n(\RRRR)|,
    \end{equation}
    the complex~\eqref{eq:goal-ses} is in fact a short exact sequence and (1) is established for $n, \RRRR,$ and $(i_0,j_0)$. Since \eqref{eq:goal-ses} is exact and $\varphi$ is induced by multiplication by $x_{i_0,j_0}$, the defining ideal of $\RRR(\symm_n(\RRRR))$ is given by 
    \begin{equation}
        \gr \, \II(\symm_n(\RRRR)) = \gr \, \II(\symm_n(\RRRR \setminus(i_0,j_0))) + (x_{i_0,j_0}) = I_n + (x_{i,j} \,:\, (i,j) \in \RRRR)
    \end{equation}
    where the second equality used \eqref{eq:deletion-presentation}. This proves (2) for $(n,\RRRR)$.
\end{proof}

For $\BBB \subseteq [n] \times [n]$, recall that $\symm_n(\BBB)$ is the locus of $n \times n$ permutation matrices which avoid $\BBB$. We pose the following problem.

\begin{problem}
    \label{prob:forbidden}
    For $\BBB \subseteq [n] \times [n]$, give an explicit set of generators of the defining ideal $\gr \, \II(\symm_n(\BBB))$ of $\RRR(\symm_n(\BBB))$.
\end{problem}

The argument of Lemma~\ref{lem:ideal-containment} shows that $I_n + (x_{i,j} \,:\, (i,j) \in \BBB) \subseteq \gr \, \II(\symm_n(\BBB))$, but this containment is often proper. For example, when $n = 3$ and $\BBB = \{(1,2),(1,3)\}$ one has $|\symm_3(\BBB)| = 2$ but
$\dim_\F S/(I_3 + (x_{1,2}, x_{1,3})) = 3.$ 

A conjectural solution to Problem~\ref{prob:forbidden} is as follows. Given $\BBB \subseteq [n] \times [n]$, call a subset $\III \subseteq [n] \times [n]$ {\em $\BBB$-inadmissible} if there does not exist $w \in \symm_n$ such that
\[
\III \subseteq \{(i,w(i) \,:\, 1 \leq i \leq n \} \subseteq ([n] \times [n]) \setminus \BBB.
\]
We have the monomial $x_\III = \prod_{(i,j) \in \III} x_{i,j}$. Define an ideal $J_\BBB \subseteq S$ by
\begin{equation}
    J_\BBB := I_n + (x_{\III} \,:\, \III \subseteq [n] \times [n] \text{ is $\BBB$-inadmissible}).
\end{equation}
In particular, if $(i,j) \notin \BBB$ then $x_{i,j}$ is a generator of $J_\BBB$. Also, if there does not exist $w \in \symm_n$ whose graph does not meet $\BBB$, then $\III = \varnothing$ is inadmissible and $J_\BBB = S$. Limited computational evidence suggests that
\begin{equation}
\label{eq:conjectural-forbidden-presentation}
    J_\BBB = \gr \, \II(\symm_n(\BBB)) \quad \quad \text{as ideals in $S$.}
\end{equation}
Since $x_\III \in \II(\symm_n(\BBB))$ for any inadmissible $\III \subseteq [n] \times [n]$, we have $J_\BBB \subseteq \gr \, \II(\symm_n(\BBB))$.

\subsection{Foata and Hilbert}
Recall that the Foata transformation $\Psi: \symm_n \xrightarrow{\,\, \sim \, \, } \symm_n$ is (in our somewhat nonstandard convention) given by letting $\Psi(w) \in \symm_n$ be the permutation obtained from $w$ by the following procedure.
\begin{enumerate}
    \item Write the cycles of $w$ starting with their smallest elements, and in decreasing order of smallest elements from left-to-right.
    \item Erase the parentheses and interpret the result as the one-line notation of $\Psi(w) \in \symm_n.$
\end{enumerate}
For example, we have \[\Psi: (4,7,5) (2)(1,3,6) \mapsto [4,7,5,2,1,3,6].\] We give a simple result on how the Foata transformation interacts with fixed points of permutations.

\begin{lemma}
    \label{lem:foata-restrict}
    Let $1 \leq m \leq n$ with $n > 1$ and suppose $w \in \symm_n$ satisfies $w(m) = m$. We have 
    \[
    \lis(\Psi(w)) = \lis(\overline{\Psi(w)})
    \]
    where the length $n-1$ sequence $\overline{\Psi(w)}$ is obtained from the length $n$ sequence $\Psi(w)$ by erasing $m$ and $\lis(\overline{\Psi(w)})$ is the length of the longest increasing subsequence of $\overline{\Psi(w)}$.
\end{lemma}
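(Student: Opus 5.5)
Since $w(m)=m$, the singleton $(m)$ is a cycle of $w$. In canonical cycle form this cycle appears as the one-letter block $(m)$. Cycles are ordered by decreasing minima, so every letter to the left of $m$ in $\Psi(w)$ lies in a cycle with minimum larger than $m$, and is therefore $>m$. Every letter to the right of $m$ lies in a cycle with minimum smaller than $m$. In particular, the letter immediately to the right of $m$ (if any) is the minimum of the next cycle, and this minimum is $<m$. So the plan is to read off the local structure of $\Psi(w)$ around $m$ and show that $m$ can always be dropped from, or swapped out of, a longest increasing subsequence.

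The inequality $\lis(\overline{\Psi(w)}) \le \lis(\Psi(w))$ is trivial, since $\overline{\Psi(w)}$ is a subsequence of $\Psi(w)$. For the reverse inequality, take a longest increasing subsequence $\sigma$ of $\Psi(w)$. If $\sigma$ avoids $m$, we are done. Otherwise, split $\sigma$ into the part before $m$, the letter $m$, and the part after $m$.

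Every letter before $m$ in $\Psi(w)$ exceeds $m$, so an increasing subsequence containing $m$ has nothing before $m$. Hence $\sigma$ starts with $m$. If $m$ is the last letter of $\Psi(w)$, then $\sigma=(m)$ has length $1$. Since $n>1$, the sequence $\overline{\Psi(w)}$ is nonempty and also has $\lis\ge 1$, so this case is fine.

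Otherwise, let $a$ be the letter immediately after $m$. As noted, $a<m$. Every later letter of $\sigma$ is $>m>a$ and occurs after $a$'s position, unless that letter is $a$ itself; but $a<m$, so $a$ cannot follow $m$ in $\sigma$. Replacing $m$ by $a$ in $\sigma$ therefore gives an increasing subsequence of $\overline{\Psi(w)}$ of the same length. This gives $\lis(\overline{\Psi(w)})\ge\lis(\Psi(w))$.

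The only point needing care is the ordering claim that letters left of $m$ exceed $m$ and the letter immediately right of $m$ is smaller than $m$. This follows directly from the convention that each cycle begins with its minimum and cycles are listed in decreasing order of minima, so I expect no real obstacle beyond handling the boundary case where $m$ is the final letter, which uses $n>1$.
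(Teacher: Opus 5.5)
Your proof is correct and follows essentially the same route as the paper. Both arguments observe that $m$ is a left-to-right minimum immediately followed by a smaller letter (the next cycle's minimum), so an increasing subsequence through $m$ must start at $m$ and $m$ can be replaced by its right neighbor; the case where $m$ is the last letter is settled by $n>1$.
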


In the example above with $w= (4,7,5) (2)(1,3,6) \in \symm_7$ and $m= 2$, we have 
\[
\Psi(w) = [4,7,5,2,1,3,6] \quad \text{and} \quad \overline{\Psi(w)} = [4,7,5,1,3,6]
\]
with
\[
\lis(\Psi(w)) = \lis(\overline{\Psi(w)}) = 3.
\]

\begin{proof}
    Write $\Psi(w) = [a_1,\dots,a_n] \in \symm_n$ and suppose $a_i = m$ so that $\overline{\Psi(w)} = [a_1,\dots,\widehat{a_i},\dots,a_n].$
    
    Suppose first that $i < n$. Since $w(m) =m$, both $a_i$ and $a_{i+1}$ are left-to-right minima in the word $[a_1,\dots,a_n]$ with $a_i > a_{i+1}$. Since $a_i$ is a left-to-right minimum, and increasing subsequence of $[a_1,\dots,a_n]$ involving $a_i$ must start with $a_i$, and since $a_i > a_{i+1}$ any such increasing subsequence can be transformed into a new increasing subsequence of the same length by replacing $a_i$ with $a_{i+1}$. In summary, we have
    \begin{equation}
    \lis( [a_1,\dots,a_n]) = \lis( [a_1 ,\dots, \widehat{a_i}, \dots, a_n]).
    \end{equation}
    
    Now suppose $i=n$. The definition of $\Psi$ and the fact that $w(m)= m$ forces $m=1$ and we certainly have
    \begin{equation}
    \lis( [a_1,\dots,a_n]) = \lis([ a_1,\dots,a_{n-1},1]) = \lis([a_1,\dots,a_{n-1}])
    \end{equation}
    which completes the proof.
\end{proof}

We apply the combinatorial Lemma~\ref{lem:foata-restrict} together with the short exact sequence in Theorem~\ref{thm:ses-presentation} (1) to derive the Hilbert series of $\RRR(\DDD_n)$.

\begin{theorem}
\label{thm:hilbert}
    The Hilbert series of $\RRR(\DDD_n)$ is given by
    \[
    \Hilb(\RRR(\DDD_n);q) = \sum_{w \in \DDD_n} q^{n - \lis(\Psi(w))}
    \]
    where $\Psi: \symm_n \xrightarrow{\,\, \sim \, \, } \symm_n$ is the Foata transformation.
\end{theorem}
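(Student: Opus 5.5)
The plan is to apply the short exact sequence of Theorem~\ref{thm:ses-presentation}(1) repeatedly, removing one diagonal rook at a time. For $T \subseteq [n]$ let $\RRRR_T := \{(i,i) \,:\, i \in T\}$, so $\symm_n(\RRRR_T)$ is the set of permutations with no fixed point in $T$. I will prove the stronger statement
\[
\Hilb(\RRR(\symm_n(\RRRR_T));q) = \sum_{w \in \symm_n(\RRRR_T)} q^{n - \lis(\Psi(w))} \quad \text{for all } n \geq 0 \text{ and } T \subseteq [n],
\]
and recover the theorem by taking $T = [n]$. The induction runs on $n$ and then on $|T|$. When $T = \varnothing$ the claim is the Hilbert series in Theorem~\ref{thm:permutation-matrix}, using that $\Psi$ is a bijection on $\symm_n$. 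The cases $n \leq 1$ follow from \eqref{eq:off-by-one}: $\DDD_1 = \varnothing$, and $\RRR(\symm_1) = \F$ matches $q^{1-1} = 1$.

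For the inductive step take $n \geq 2$, $T \neq \varnothing$, and a rook $(m,m) \in \RRRR_T$. Theorem~\ref{thm:ses-presentation}(1) gives a short exact sequence in which $\varphi$ is multiplication by $x_{m,m}$ and therefore has degree $1$. Taking Hilbert series,
\[
\Hilb(\RRR(\symm_n(\RRRR_T));q) = \Hilb(\RRR(\symm_n(\RRRR_{T \setminus m}));q) - q \cdot \Hilb(\RRR(\symm_{n-1}(\RRRR_T/(m,m)));q).
\]
The contraction $\RRRR_T/(m,m)$ is again a diagonal placement, namely $\RRRR_{T \setminus m}$ viewed on the board $([n]\setminus m)^2$. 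After order-isomorphically relabeling $[n]\setminus m$ as $[n-1]$, it is a diagonal placement $\RRRR_{T'}$ on $[n-1] \times [n-1]$. Applying induction to both terms on the right, the first is a sum over $\symm_n(\RRRR_{T\setminus m})$. This set is the disjoint union of $\symm_n(\RRRR_T)$ and the set $\{w \in \symm_n(\RRRR_{T \setminus m}) \,:\, w(m) = m\}$. So it suffices to show
\[
\sum_{\substack{w \in \symm_n(\RRRR_{T\setminus m})\\ w(m)=m}} q^{n-\lis(\Psi(w))} = q \sum_{u \in \symm_{n-1}(\RRRR_{T'})} q^{(n-1)-\lis(\Psi(u))}.
\]

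This identity comes from Lemma~\ref{lem:foata-restrict}. Deleting the fixed point $m$ and standardizing gives a bijection $w \mapsto u$ onto $\symm_{n-1}(\RRRR_{T'})$. The key check, which I expect to be the main (if modest) obstacle, is that this bijection commutes with $\Psi$: erasing $m$ from the word $\Psi(w)$ and standardizing should produce $\Psi(u)$. This holds because $(m)$ is a singleton cycle. Removing it leaves the other cycles, their minima, and the decreasing order of those minima unchanged after order-preserving relabeling, and standardization preserves relative order. Since standardization also preserves $\lis$, Lemma~\ref{lem:foata-restrict} gives $\lis(\Psi(w)) = \lis(\Psi(u))$. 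Hence $n - \lis(\Psi(w)) = 1 + (n-1-\lis(\Psi(u)))$, which proves the identity and completes the induction.
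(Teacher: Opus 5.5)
Your proposal is correct and takes essentially the same route as the paper: you iterate the deletion--contraction short exact sequence of Theorem~\ref{thm:ses-presentation}(1) on diagonal rooks, take Hilbert series using that $\varphi$ has degree $1$, and match the fixed-point term via Lemma~\ref{lem:foata-restrict}. The differences are cosmetic: the paper uses the nested family $\symm_{n,m}$ (i.e.\ $T = \{m+1,\dots,n\}$) rather than arbitrary $T$, and it leaves implicit the compatibility of $\Psi$ with deleting the fixed point and standardizing, which you check explicitly.
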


\begin{proof}
    The theorem is easily checked for $n=0,1$, so we assume $n > 1$.
    Let $\symm_{n,m} \subseteq \symm_n$ be the set of permutations
\begin{equation}
\symm_{n,m} := \{ w \in \symm_n \,:\, w(i) \neq i \text{ for } i > m \}.
\end{equation}
In particular, we have $\symm_{n,n} = \symm_n$ and $\symm_{n,0} = \DDD_n$. We have $\symm_{n,m} = \symm_n(\RRRR)$ where \[\RRRR = \{(i,i) \,:\, m < i \leq n \}.\]
Theorem~\ref{thm:permutation-matrix} implies that for $m=n$ one has 
\begin{equation}
\label{eq:hilbert-restatement}
    \Hilb(\RRR(\symm_{n,n});q) = \Hilb(\RRR(\symm_n);q) = \sum_{w \in \symm_n} q^{n-\lis(w)}.
\end{equation}

Suppose $0 \leq m < n$. Theorem~\ref{thm:ses-presentation} (1) gives a short exact sequence
\begin{equation}
\label{eq:reformulated-ses}
    0 \to \RRR(\symm_{n-1,m}) \to \RRR(\symm_{n,m+1}) \to \RRR(\symm_{n,m}) \to 0
\end{equation}
where the first map is homogeneous of degree 1  and the second map is homogeneous of degree 0. 
We have the chain of equalities
\begin{align}
    \Hilb(\RRR(\symm_{n,m});q) &= \Hilb(\RRR(\symm_{n,m+1});q) - q \cdot \Hilb(\RRR(\symm_{n-1,m});q) \\
    &= \sum_{w \in \symm_{n,m+1}} q^{n - \lis(\Psi(w))} - q \cdot \sum_{w \in \symm_{n-1,m}} q^{n-1-\lis(\Psi(w))} \\
    &= \sum_{w \in \symm_{n,m+1}} q^{n - \lis(\Psi(w))} -   \sum_{w \in \symm_{n-1,m}} q^{n-\lis(\Psi(w))} \\
    &= \sum_{w \in \symm_{n,m+1}} q^{n - \lis(\Psi(w))} -   \sum_{\substack{w \in \symm_{n,m+1} \\ w(m+1) = m+1}} q^{n-\lis(\Psi(w))} \\
    &= \sum_{w \in \symm_{n,m}} q^{n - \lis(\Psi(w))}.
\end{align}
The first equality records the exactness and degree properties of  \eqref{eq:reformulated-ses}.
The second equality is by upward induction on $n$ and downward induction on $m$ (with the base case on $m$ handled by Equation~\eqref{eq:hilbert-restatement} and the fact that $\Psi: \symm_{n-1} \to \symm_{n-1}$ is a bijection).
The third equality is absorption.
The fourth equality is Lemma~\ref{lem:foata-restrict}.
The fifth equality is subtraction.
Taking $m=0$ completes the proof.
\end{proof}

The above proof and $(\symm_n \times \symm_n)$-equivariance give the stronger statement
\begin{equation}
\label{eq:general-rook-hilbert}
    \Hilb(\RRR(\symm_n(\RRRR));q) = \sum_{w \in \symm_{n,m}} q^{n-\lis(\Psi(w))}
\end{equation}
whenever $\RRRR \subseteq [n] \times [n]$ is a rook placement with $|\RRRR| = n-m$.
When $n = 4$, one has the following data
\[
\begin{tabular}{c | c | c | c}

$w \in \DDD_n$ & $\Psi(w)$ & $\lis(\Psi(w))$ & $n - \lis(\Psi(w))$ \\
\hline 
$(1,2,3,4)$ & $[1,2,3,4]$ & $4$ & $0$ \\
$(1,3,2,4)$ & $[1,3,2,4]$ & $3$ & $1$ \\
$(1,2,4,3)$ & $[1,2,4,3]$ & $3$ & $1$ \\
$(1,3,4,2)$ & $[1,3,4,2]$ & $3$ & $1$ \\
$(1,4,2,3)$ & $[1,4,2,3]$ & $3$ & $1$ \\
$(1,4,3,2)$ & $[1,4,3,2]$ & $2$ & $2$ \\
$(3,4)(1,2)$ & $[3,4,1,2]$ & $2$ & $2$ \\
$(2,4)(1,3)$ & $[2,4,1,3]$ & $2$ & $2$ \\
$(2,3)(1,4)$ & $[2,3,1,4]$ & $3$ & $1$ \\
\end{tabular}
\]
so that Theorem~\ref{thm:hilbert} gives
\begin{equation*}
    \Hilb(\RRR(\DDD_n);q) = 1 + 5 q + 3 q^2.
\end{equation*}

\section{Module structure}
\label{sec:Module}

In this section we calculate the graded $\symm_n$-module structure of $\RRR(\DDD_n)$. As with the Hilbert series of $\RRR(\DDD_n)$, this will be achieved by considering related quotient rings over submatrices of $\xx_{n \times n}$, relating these quotient rings via exact sequences, and applying induction. The short exact sequence in Theorem~\ref{thm:ses-presentation} (1) and the mapping cone result of Theorem~\ref{thm:exact-cone} will play major roles. We begin by setting up our notation.

\subsection{Some algebraic notation} Let $K \subseteq [n]$. We define
\begin{equation}
    S^{(K)} := \F[x_{i,j} \,:\, 1 \leq i,j \leq n, \, \, i, j \notin K] \subseteq S
\end{equation}
to be the subalgebra of $S$ generated by variables $x_{i,j}$ for which $i,j \notin K$. The ring $S^{(K)}$ carries a graded action of the subgroup $\symm_{[n] - K} \subseteq \symm_n$ by $w \cdot x_{i,j} := x_{w(i),w(j)}$.
We write $I_{n-k}^{(K)} \subseteq S^{(K)}$ for the ideal with the same generating set as in Definition~\ref{def:I-ideal-definition}, but over the $(n-k) \times (n-k)$ matrix of variables generating $S^{(K)}$. We write
\begin{equation}
    \RRR(\symm_{n-k})^{(K)} := S^{(K)}/I^{(K)}_{n-k}
\end{equation}
for the corresponding copy of the quotient ring $\RRR(\symm_{n-k})$. The ring $\RRR(\symm_{n-k})^{(K)}$ carries a graded action of $\symm_{[n]-K}$. 

We will need to relate the quotient rings $\RRR(\symm_{n-k})^{(K)}$ to one another. This will be done using the following maps.

\begin{lemma}
    \label{lem:K-maps}
    Suppose $K \subseteq [n]$ satisfies $|K| = k$ and let $i \in K$. Multiplication by $x_{i,i}$ induces a $S^{(K)}$-module homomorphism
    \[
   \varphi:  \RRR(\symm_{n-k})^{(K)} \longrightarrow \RRR(\symm_{n-k+1})^{(K-i)}
    \]
    given by $\varphi: f + I_{n-k}^{(K)} \mapsto x_{i,i} \cdot f + I_{n-k+1}^{(K-i)}$. The homomorphism $\varphi$ is homogeneous of degree 1.
\end{lemma}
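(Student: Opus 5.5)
The claim is that multiplication by $x_{i,i}$ is well-defined as a map between the stated quotients. That reduces to one ideal containment in $S^{(K-i)}$:
\[
x_{i,i} \cdot I_{n-k}^{(K)} \subseteq I_{n-k+1}^{(K-i)}.
\]
Here $S^{(K)} \subseteq S^{(K-i)}$, so the generators of $I_{n-k}^{(K)}$ make sense as elements of $S^{(K-i)}$. Once this containment holds, the rule $f + I_{n-k}^{(K)} \mapsto x_{i,i} f + I_{n-k+1}^{(K-i)}$ is well defined. It is visibly $S^{(K)}$-linear, since it is multiplication by a fixed element, and homogeneous of degree $1$, since $x_{i,i}$ has degree $1$ and both ideals are homogeneous. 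It suffices to check the containment on the generators of $I_{n-k}^{(K)}$, taking the four types of Definition~\ref{def:I-ideal-definition} in turn.

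\textbf{Quadratic generators.} The products $x_{a,b}x_{a,b'}$ and $x_{a,b}x_{a',b}$ with all indices outside $K$ are already generators of $I_{n-k+1}^{(K-i)}$. Their multiples by $x_{i,i}$ therefore lie in that ideal.

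\textbf{Linear generators.} Take a row sum $r_a^{(K)} = \sum_{b \notin K} x_{a,b}$ with $a \notin K$. In the larger variable set indexed by $[n]-(K-i)$ we have
\[
r_a^{(K-i)} = r_a^{(K)} + x_{a,i}.
\]
Hence
\[
x_{i,i} \, r_a^{(K)} = x_{i,i} \, r_a^{(K-i)} - x_{i,i}x_{a,i}.
\]
The first term lies in $I_{n-k+1}^{(K-i)}$ because $r_a^{(K-i)}$ is a generator. The second term is a product of two variables in column $i$, which is a generator because $a \neq i$ and both $a$ and $i$ are indices of $S^{(K-i)}$. Column sums are handled symmetrically, using $x_{i,i}x_{i,b}$ as a same-row product.

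This is a direct check, and I expect no real obstacle. The only care needed is in the linear-generator step: the correction term involves the new row or column $i$, and it is absorbed by a quadratic relation that shares either the row index or the column index of $x_{i,i}$.
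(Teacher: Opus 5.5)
Your proposal is correct and follows the same approach as the paper, whose proof says only that one checks on generators that $I_{n-k}^{(K)}$ is killed by multiplication by $x_{i,i}$ followed by projection to $\RRR(\symm_{n-k+1})^{(K-i)}$. You carry out that generator check explicitly, and you correctly observe that the correction terms $x_{i,i}x_{a,i}$ and $x_{i,i}x_{i,b}$, which appear when a row or column sum is extended, are themselves quadratic generators of $I_{n-k+1}^{(K-i)}$.
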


\begin{proof}
    One checks on generators that $I_{n-k}^{(K)}$ is annihilated by the composition
    \begin{equation}
        S^{(K)} \xrightarrow{ \, \, \times x_{i,i} \, \, } S^{(K-i)} \xrightarrow{\, \, \mathrm{can. \, \, } } \RRR(\symm_{n-k+1})^{(K-i)}
    \end{equation}
    where the first map is multiplication by $x_{i,i}$ and the second is the canonical projection.
\end{proof}

Let $U \cong \F^n$ be the $n$-dimensional $\F$-vector space with distinguished basis
\begin{equation}
    U := \F \cdot \{ \ee_1, \dots, \ee_n \}.
\end{equation}
Let $\bigwedge U = \bigoplus_{k=0}^n \bigwedge^k U$ be the free exterior algebra over $U$. The vector space $\bigwedge^k U$ has basis $\{ \ee_K \,:\, K \subseteq [n], \,\, |K|=k \}$ where 
\begin{equation}
    \ee_K := \ee_{i_1} \wedge \cdots \wedge \ee_{i_k} \quad \text{for } K = \{i_1 < \cdots < i_k \}.
\end{equation}

We put a graded structure on the tensor product
\begin{equation}
    S \otimes \big(\bigwedge U\big) \cong \bigoplus_{K \subseteq [n]} S \cdot \ee_K
\end{equation}
by setting 
\begin{equation}
    \deg(x_{i,j}) = 1 \quad \text{and} \quad \deg(\ee_i) = 0
\end{equation}
for all $1 \leq i,j \leq n$. The rules 
\begin{equation}
    w \cdot x_{i,j} := x_{w(i),w(j)} \quad \text{and} \quad w \cdot \ee_i := \ee_{w(i)} \quad \quad \text{($w \in \symm_n$,  $ \, \, 1 \leq i,j \leq n$)}
\end{equation}
give $S \otimes (\bigwedge U)$ the structure of a graded $\symm_n$-module. The action of $\symm_n$ on the given direct sum decomposition of $S \otimes (\bigwedge U)$ is determined by 
\begin{equation}
    w \cdot (S \cdot \ee_K) = S \cdot \ee_{w(K)} \quad \quad (w \in \symm_n, \, \, K \subseteq [n])
\end{equation}
where $w(K) = \{w(k) \,:\, k \in K \}$.

For any $0 \leq k \leq n$, consider the linear subspace of $S \otimes (\bigwedge U)$ given by 
\begin{equation}
    \bigoplus_{|K|=k} S^{(K)} \cdot \ee_K \subseteq S \otimes \big(\bigwedge U\big)
\end{equation}
where the direct sum is over $k$-element subsets $K \subseteq [n]$. One can check that $\bigoplus_{|K|=k} S^{(K)} \cdot \ee_K$ is a graded subspace of $S \otimes (\bigwedge U)$ which is closed under the action of $\symm_n$. We have a further graded subspace
\begin{equation}
    \bigoplus_{|K|=k} I_{n-k}^{(K)} \cdot \ee_K \subseteq
    \bigoplus_{|K|=k} S^{(K)} \cdot \ee_K \subseteq S \otimes \big(\bigwedge U\big)
\end{equation}
and the generating set of Definition~\ref{def:I-ideal-definition} implies that $\bigoplus_{|K|=k} I_{n-k}^{(K)} \cdot \ee_K$ is also $\symm_n$-stable. We write
\begin{equation}
\label{eq:v-definition}
    V_k := \frac{\bigoplus_{|K|=k} S^{(K)} \cdot \ee_K}{\bigoplus_{|K|=k} I^{(K)}_{n-k} \cdot \ee_K} \cong \bigoplus_{|K| =k} \RRR(\symm_{n-k})^{(K)} \cdot \ee_K
\end{equation}
for the corresponding graded quotient module. The graded $\symm_n$-structure of $V_k$ is as follows.

\begin{lemma}
    \label{lem:v-induction}
    We have an identification of graded $\symm_n$-modules 
    \begin{equation}
        V_k \cong \Ind_{\symm_{n-k} \times \symm_k}^{\symm_n} (\RRR(\symm_{n-k}) \otimes_\F \sign_{\symm_k})
    \end{equation}
    where $\sign_{\symm_k}$ is the 1-dimensional sign representation of $\symm_k$ in degree 0 and the graded $(\symm_{n-k} \times \symm_{n-k})$-module $\RRR(\symm_{n-k})$ is regarded as a graded $\symm_{n-k}$ module via the diagonal embedding \[\Delta: \symm_{n-k} \to \symm_{n-k} \times \symm_{n-k}\] given by $\Delta(w) = (w,w).$
\end{lemma}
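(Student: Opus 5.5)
The plan is to recognize $V_k$ as a module induced from the stabilizer of a single summand. The group $\symm_n$ permutes the summands $\RRR(\symm_{n-k})^{(K)} \cdot \ee_K$ of \eqref{eq:v-definition} transitively, because $w$ carries the $K$-summand onto the $w(K)$-summand. This is well defined: $w \cdot S^{(K)} = S^{(w(K))}$ and $w \cdot I^{(K)}_{n-k} = I^{(w(K))}_{n-k}$, since the generating set of Definition~\ref{def:I-ideal-definition} over the submatrix indexed by $[n]-K$ is sent to the corresponding generating set over $[n]-w(K)$. By the standard characterization of induced modules (a module that is a direct sum of subspaces permuted transitively by $G$ is induced from the stabilizer of one summand, acting on that summand), we get
\[
V_k \cong \Ind_{H}^{\symm_n} \bigl( \RRR(\symm_{n-k})^{(K_0)} \cdot \ee_{K_0} \bigr).
\]
Here we fix $K_0 = \{n-k+1,\dots,n\}$, and $H$ is the stabilizer of $K_0$, namely $H = \symm_{[n]-K_0} \times \symm_{K_0} \cong \symm_{n-k} \times \symm_k$. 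The induction respects gradings because every $w$ acts by degree-preserving maps and $\deg(\ee_i)=0$.

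The next step is to identify the $H$-module structure on the summand $\RRR(\symm_{n-k})^{(K_0)} \cdot \ee_{K_0}$ for a pair $(u,v) \in \symm_{[n]-K_0} \times \symm_{K_0}$. Consider the factor $u$. It fixes $K_0$ pointwise, so $u \cdot \ee_{K_0} = \ee_{K_0}$. It acts on $S^{(K_0)} = \F[x_{i,j} : i,j \le n-k]$ by $x_{i,j} \mapsto x_{u(i),u(j)}$, which is exactly the diagonal action $\Delta(u)=(u,u)$ of $\symm_{n-k}$ on $\RRR(\symm_{n-k})$. Now consider the factor $v$. Since $v$ fixes every index outside $K_0$, it fixes every variable of $S^{(K_0)}$, so it acts trivially on the polynomial part. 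On the exterior part, $v \cdot \ee_{K_0} = \ee_{v(i_1)} \wedge \cdots \wedge \ee_{v(i_k)} = \sign(v)\, \ee_{K_0}$ by antisymmetry of the wedge product. Combining the two factors,
\[
(u,v)\cdot (f \cdot \ee_{K_0}) = \sign(v)\,(u\cdot f)\cdot \ee_{K_0}.
\]
This is precisely the $H$-module $\RRR(\symm_{n-k}) \otimes \sign_{\symm_k}$, with the sign character sitting in degree $0$.

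Substituting this into the induced-module description gives the lemma. The computations are routine; the only step needing care is the well-definedness check that $w$ carries $I^{(K)}_{n-k}\cdot \ee_K$ into $I^{(w(K))}_{n-k}\cdot \ee_{w(K)}$. The wedge sign is a scalar and the row and column sums and products are permuted among themselves, so this holds. I expect that check, together with being careful that the sign appears only through the $\symm_k$ factor, to be the main point.
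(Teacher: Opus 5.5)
Your proposal is correct and takes the same approach as the paper, which identifies $\F[\symm_n] \otimes_{\F[\symm_{n-k} \times \symm_k]} (\RRR(\symm_{n-k}) \otimes_\F \sign_{\symm_k})$ with $\bigoplus_{|K|=k} \RRR(\symm_{n-k})^{(K)} \cdot \ee_K$, where $\symm_K$ acts on $\ee_K$ by sign. You spell out what the paper leaves implicit: that $\symm_n$ permutes the summands, the stabilizer description, and the split of the stabilizer action into the diagonal action on the polynomial factor and the sign character on $\ee_{K_0}$.
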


\begin{proof}
    The induction $\Ind_{\symm_{n-k} \times \symm_k}^{\symm_n} (\RRR(\symm_{n-k}) \otimes_\F \sign_{\symm_k})$ is given by
    \begin{align}
        \Ind_{\symm_{n-k} \times \symm_k}^{\symm_n} (\RRR(\symm_{n-k}) \otimes_\F \sign_{\symm_k}) &= \F[\symm_n] \otimes_{\F[\symm_{n-k} \times \symm_k]} ( \RRR(\symm_{n-k}) \otimes_\F \sign_{\symm_k}) \\
        &= \bigoplus_{|K|=k} \RRR(\symm_{n-k})^{(K)} \cdot \ee_K
    \end{align}
    where for $K \subseteq [n]$ with $|K| = k$, we have $w \cdot \ee_K = \sign(w) \cdot \ee_K$ for all $w \in \symm_K$.
\end{proof}

 The module structure of $\RRR(\DDD_n)$ will be an alternating graded sum of the $V_k$. To set up the proper inductive structure, we generalize \eqref{eq:v-definition} slightly. Given $T \subseteq [n]$, write
\begin{equation}
    \label{eq:definition-of-a}
    A_{n,T} := \RRR(\symm_n(\RRRR_T)) \quad \text{where } \RRRR_T = \{ (t,t) \,:\, t \in T \}.
\end{equation}
 In particular, we have
\begin{equation}
    A_{n,[n]} = \RRR(\DDD_n).
\end{equation}We set
\begin{equation}
    \label{eq:definition-of-c}
     C_{n,k}(T) := \bigoplus_{\substack{K \subseteq T \\ |K| = k}} \RRR(\symm_{n-k})^{(K)} \cdot \ee_K.
\end{equation}
In the case $T = [n]$ we have
\begin{equation}
\label{eq:c-and-v}
    C_{n,k}([n]) = V_k.
\end{equation}
The $A_{n,T}$ are related by the following short exact sequences.

\begin{lemma}
    \label{lem:a-ses}
    Let $n \geq 2$,
    let $T \subseteq [n]$ and let $t_0 \in T$. Regard $A_{n-1,T-t_0}$ as a quotient of \[S^{(t_0)} = \F[x_{i,j} \,:\, 1 \leq i,j \leq n, \, \, i,j \neq t_0]\] in the natural way. We have a short exact sequence
    \begin{equation}
        0 \to A_{n-1,T-t_0} \xrightarrow{ \, \, \varphi \, \, } A_{n,T-t_0} \xrightarrow{\, \, \pi \, \, } A_{n,T} \to 0
    \end{equation}
    where $\varphi$ is induced by multiplication by $x_{t_0,t_0}$ and $\pi$ is the canonical projection. 
\end{lemma}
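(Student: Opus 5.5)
This is the special case of Theorem~\ref{thm:ses-presentation}~(1) for the rook placement $\RRRR_T = \{(t,t) \,:\, t \in T\}$ and the rook $(i_0,j_0) = (t_0,t_0)$. The plan is to check that the three terms of the sequence there coincide with $A_{n-1,T-t_0}$, $A_{n,T-t_0}$ and $A_{n,T}$, and that the maps agree.

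First, $\RRRR_T$ is a nonempty rook placement on $[n] \times [n]$ because $t_0 \in T$. Since $n \geq 2$, the hypotheses of Theorem~\ref{thm:ses-presentation}~(1) are satisfied for the rook $(t_0,t_0)$. Next, I identify the terms:
\begin{itemize}
\item The deletion is $\RRRR_T \setminus (t_0,t_0) = \RRRR_{T - t_0}$, so the middle term is $\RRR(\symm_n(\RRRR_{T-t_0})) = A_{n,T-t_0}$.
\item The right term is $\RRR(\symm_n(\RRRR_T)) = A_{n,T}$.
\item The contraction $\RRRR_T/(t_0,t_0)$ lives on the board $([n]-t_0)\times([n]-t_0)$, with both rows and columns indexed by $[n]-t_0$. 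It is therefore the diagonal placement $\{(t,t) \,:\, t \in T-t_0\}$ on that board. Moreover $S^{(t_0,t_0)}$ is exactly the ring $S^{(t_0)}$ of the statement. So the left term $\RRR(\symm_{n-1}(\RRRR_T/(t_0,t_0)))^{(t_0,t_0)}$ is $A_{n-1,T-t_0}$, viewed as a quotient of $S^{(t_0)}$ in the natural way.
\end{itemize}

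Finally, I match the maps. In Theorem~\ref{thm:ses-presentation}, $\varphi$ is induced by multiplication by $x_{i_0,j_0} = x_{t_0,t_0}$ and $\pi$ is the canonical projection, which is exactly what the statement asserts. Theorem~\ref{thm:ses-presentation}~(1) then gives exactness directly.

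The only point needing care is the identification of the contraction. The theorem uses an order-isomorphic relabeling to $[n-1]\times[n-1]$, and this relabeling must carry the diagonal placement to the diagonal placement. It does, because rows and columns are relabeled by the same order-preserving bijection $[n]-t_0 \to [n-1]$. Since no relabeling is actually needed (everything lives in $S^{(t_0)}$ directly), I regard this step as bookkeeping rather than a genuine obstacle.
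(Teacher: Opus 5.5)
Your proposal is correct and is the same as the paper's proof, which simply specializes Theorem~\ref{thm:ses-presentation}~(1) to the rook placement $\RRRR = \RRRR_T$ and the rook $(i_0,j_0) = (t_0,t_0)$. Your identification of the deletion with $\RRRR_{T-t_0}$ and of the contraction with the diagonal placement on $([n]-t_0)\times([n]-t_0)$ is the bookkeeping the paper leaves implicit.
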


At $n=1$, the sequence in Lemma~\ref{lem:a-ses} becomes 
 \begin{equation*}
        \begin{tikzpicture}[scale=0.8]
            \node(10) at (0,2) {$0$};
            \node(11) at (4,2) {$A_{0,\varnothing}$};
            \node(12) at (8,2) {$A_{1,\varnothing}$};
            \node(13) at (12,2) {$A_{1,\{1\}}$};
            \node(14) at (16,2) {$0$};

            \node(00) at (0,0) {$0$};
            \node(01) at (4,0) {$\F$};
            \node(02) at (8,0) {$\F$};
            \node(03) at (12,0) {$0$};
            \node(04) at (16,0) {$0$};

            \draw[->] (00) -- (01);
            \draw[->] (01) -- node[above] {$0$} (02);
            \draw[->] (02) -- (03);
            \draw[->] (10) -- (11);
            \draw[->] (13) -- (14);
            \draw[->] (03) -- (04);
            \draw[->] (11) -- node[above] {$\varphi$} (12);
            \draw[->] (12) -- node[above] {$\pi$} (13);
            \draw[-,double distance=1.5pt] (01) -- (11);
            \draw[-,double distance=1.5pt] (02) -- (12);
            \draw[-,double distance=1.5pt] (03) -- (13);
        \end{tikzpicture}
    \end{equation*}
which is not exact.

\begin{proof}
    This is the short exact sequence of Theorem~\ref{thm:ses-presentation} (1) where $\RRRR = \RRRR_T = \{ (t,t) \,:\, t \in T \}$ and $(i_0,j_0) = (t_0,t_0)$.
\end{proof}

\subsection{Mapping cones and resolutions} 
We show that the $C_{n,k}(T)$ assemble into exact sequences which give resolutions of the $A_{n,T}$.  These sequences take a slightly different form depending on whether $T$ is a proper subset of $[n]$. We handle the case of proper subsets $T \subsetneq [n]$ first.

\begin{lemma}
    \label{lem:T-less-than}
    Suppose $T \subsetneq [n]$ is a {\bf proper} subset of $[n]$. We have an exact sequence of $\F$-vector spaces
    \[
    0 \to C_{n,|T|}(T) \xrightarrow{\, \partial_{|T|} \, } \cdots \to C_{n,1}(T) \xrightarrow{\,\partial_1 \,} C_{n,0}(T) \xrightarrow{\, \partial_0 \,} A_{n,T} \to 0
    \]
    where the differentials $\partial_k: C_{n,k}(T) \to C_{n,k-1}(T)$ are given by
    \[
    \partial_k: \sum_{\substack{K \subseteq T \\ |K| = k}} f_K \cdot \ee_K \mapsto \sum_{s=1}^k (-1)^{s-1} x_{i_s,i_s} \cdot f_K \cdot \ee_{K-i_s} 
    \]
    where $K = \{i_1 < \cdots < i_k\} \subseteq T$ and $k > 0$. The final differential $\partial_0: C_{n,0}(T) \to A_{n,T}$ is the canonical projection 
    \[
    \partial_0 : \RRR(\symm_n)^{(\varnothing)} \cdot \ee_\varnothing = \RRR(\symm_n) \twoheadrightarrow A_{n,T}.
    \]
\end{lemma}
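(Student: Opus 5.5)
Induct on $|T|$, building each resolution from the previous one as a mapping cone via Theorem~\ref{thm:exact-cone}. The base case $T = \varnothing$ is immediate: the sequence reads $0 \to C_{n,0}(\varnothing) = \RRR(\symm_n) \xrightarrow{\partial_0} A_{n,\varnothing} = \RRR(\symm_n) \to 0$, and $\partial_0$ is the identity.

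For the inductive step, let $T \subsetneq [n]$ be nonempty. Choose $t_0 = \max T$; this choice makes the signs come out cleanly. Set $T' = T - t_0$. Since $T \neq [n]$, we have $n \geq 2$, so Lemma~\ref{lem:a-ses} gives a short exact sequence
\[
0 \to A_{n-1,T'} \xrightarrow{\varphi} A_{n,T'} \xrightarrow{\pi} A_{n,T} \to 0 .
\]
Both outer terms are governed by the induction hypothesis:
\begin{itemize}
\item $T' \subsetneq [n]$ with $|T'| < |T|$, so $A_{n,T'}$ has the resolution $Q_\bullet = C_{n,\bullet}(T')$.
\item Regard $A_{n-1,T'}$ on the board $([n]-t_0)\times([n]-t_0)$, where $T' \subseteq [n]-t_0$ is still proper: some $j \notin T$ lies in $[n]-t_0$. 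Applying the induction hypothesis in the relabeled ring $S^{(t_0)}$, its resolution is $P_\bullet$ with
\[
P_k = \bigoplus_{K \subseteq T',\, |K|=k} \RRR(\symm_{n-k-1})^{(K\cup t_0)}\,\ee_K .
\]
\end{itemize}

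Next define the chain map $f_\bullet : P_\bullet \to Q_\bullet$ by $f_k(g\,\ee_K) = x_{t_0,t_0}\, g\,\ee_K$. It is well defined by Lemma~\ref{lem:K-maps}, applied to remove $t_0$ from $K \cup t_0$. It commutes with the differentials because multiplication by $x_{t_0,t_0}$ commutes with multiplication by the $x_{i_s,i_s}$, and the signs in $\partial$ depend only on the positions of the $i_s$ within $K \subseteq T'$. In degree $0$, compatibility with the augmentations is exactly the statement that $\varphi$ is induced by multiplication by $x_{t_0,t_0}$: the square $P_0 = \RRR(\symm_{n-1})^{(t_0)} \to A_{n-1,T'} \to A_{n,T'}$ agrees with $P_0 \to \RRR(\symm_n) \to A_{n,T'}$. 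Theorem~\ref{thm:exact-cone} then yields a resolution $\Cone(f_\bullet)$ of $A_{n,T}$ with
\[
\Cone(f_\bullet)_k = C_{n,k}(T') \oplus P_{k-1}.
\]

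Finally, identify this cone with $C_{n,\bullet}(T)$. Subsets $K \subseteq T$ with $|K| = k$ split into those avoiding $t_0$, which give the summand $C_{n,k}(T')$, and those of the form $K = K' \cup t_0$ with $K' \subseteq T'$ and $|K'| = k-1$, which give the summand $P_{k-1}$ via $g\,\ee_{K'} \mapsto \epsilon_k\, g\,\ee_{K'\cup t_0}$ for suitable signs $\epsilon_k$. Because $t_0 = \max T$, in $\partial_k(g\,\ee_{K'\cup t_0})$ the term removing $t_0$ carries sign $(-1)^{k-1}$ and produces $x_{t_0,t_0} g\,\ee_{K'}$. The remaining terms are $\ee_{K'\cup t_0}$-versions of $\partial^{P}_{k-1}$. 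Comparing with the cone differential $d(b,a) = (\partial b + f(a), -\partial a)$, the choice $\epsilon_k = (-1)^{k-1}$ matches both components: the $t_0$-removal term matches $f(a)$, and the sign flip on $-\partial a$ matches. The augmentation on $C_{n,0}(T) = Q_0$ is $\pi \circ d_0^Q$, the canonical projection, as required.

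The main obstacle is this sign bookkeeping, including verifying that the relabeled resolution $P_\bullet$ really has the claimed differential. If the signs fail to match, a diagonal sign-twisting isomorphism of complexes still transports exactness, so the argument does not hinge on getting $\epsilon_k$ exactly right.
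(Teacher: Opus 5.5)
Your proposal is correct and is essentially the paper's proof: you induct on $|T|$, take $t_0=\max T$, resolve both ends of the short exact sequence of Lemma~\ref{lem:a-ses} by induction (the $A_{n-1,T-t_0}$ end on the order-preservingly relabeled board), join the two resolutions by the chain map given by multiplication by $x_{t_0,t_0}$, and identify $\Cone(f_\bullet)$ with $C_{n,\bullet}(T)$ using Theorem~\ref{thm:exact-cone}. Your explicit signs (identity on $C_{n,k}(T-t_0)$ and $\epsilon_k=(-1)^{k-1}$ on the $\ee_{K'\cup t_0}$ summand) do make the identification a chain isomorphism compatible with the canonical-projection augmentation, so your closing hedge about sign-twisting is not needed.
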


Observe that the maps $\partial_k$ are homogeneous degree 1 for $k > 0$ and that $\partial_0$ is homogeneous of degree 0.
 We will also use the mapping cone result of Theorem~\ref{thm:exact-cone} together with the short exact sequence Lemma~\ref{lem:a-ses} in the following proof.

\begin{proof}
    We use upward induction on $|T|$ and downward induction on $n$. If $T = \varnothing$ is empty, we have $A_{n,\varnothing} = \RRR(\symm_n)$ and the sequence in question is simply 
    \begin{equation*}
        \begin{tikzpicture}[scale=0.8]
            \node(10) at (0,2) {$0$};
            \node(11) at (4,2) {$C_{n,0}(\varnothing)$};
            \node(12) at (8,2) {$A_{n,\varnothing}$};
            \node(13) at (12,2) {$0$};

            \node(00) at (0,0) {$0$};
            \node(01) at (4,0) {$\RRR(\symm_n)$};
            \node(02) at (8,0) {$\RRR(\symm_n)$};
            \node(03) at (12,0) {$0$};

            \draw[->] (00) -- (01);
            \draw[->] (01) -- node[above] {$\mathrm{id.}$} (02);
            \draw[->] (02) -- (03);
            \draw[->] (10) -- (11);
            \draw[->] (11) -- node[above] {$\partial_0$} (12);
            \draw[->] (12) -- (13);
            \draw[-,double distance=1.5pt] (01) -- (11);
            \draw[-,double distance=1.5pt] (02) -- (12);
        \end{tikzpicture}
    \end{equation*}
    which is certainly exact.
    If $n = 1$, since $T$ is a proper subset we must have $T = \varnothing$ and we are reduced to the same exact sequence. 

    Suppose $T$ is a proper and nonempty subset of $[n]$. In particular, we have $n >1$. Let $t_0 = \max(T)$ be the largest element of $T$. Lemma~\ref{lem:a-ses} gives a short exact sequence
    \begin{equation}
    \label{eq:A-ses}
    0 \to A_{n-1, T- t_0} \to A_{n,T-t_0} \to A_{n,T} \to 0
    \end{equation}
    where $A_{n-1,T-i_0}$ is regarded in the $(n-1) \times (n-1)$ matrix of variables 
    \begin{equation}
    \label{eq:bar-x-set-t0}
    \overline{\xx} := (x_{i,j} \,:\, 1 \leq i, j \leq n, \, \, i,j \neq t_0 ).
    \end{equation}
    The first map $A_{n-1, T- t_0} \to A_{n,T-t_0}$ is induced by multiplication by $x_{t_0,t_0}$. Since $T - t_0$ is a proper subset of $[n] - t_0$, by induction we have exact sequences of $\F$-vector spaces
    \begin{equation}
    \label{eq:top-resolution-1}
    0 \to C_{n-1,|T|-1}(T - t_0) \to \cdots \to C_{n-1,1}(T - t_0) \to C_{n-1,0}(T - t_0) \to A_{n-1,T -t_0} \to 0
    \end{equation}
    and
    \begin{equation}
    \label{eq:bottom-resolution-1}
     0 \to C_{n,|T|-1}(T - t_0) \to \cdots \to C_{n,1}(T - t_0) \to C_{n,0}(T - t_0) \to A_{n,T -t_0} \to 0
    \end{equation}
    with differentials as described in the statement of the lemma.
    In the top exact sequence \eqref{eq:top-resolution-1} one has
    \begin{equation}
    C_{n-1,k}(T-t_0) = \bigoplus_{\substack{K \subseteq T - t_0 \\ |K| =k}} \RRR(\symm_{n-k-1})^{(K)} \cdot \ee_K \quad 
    \text{with $\RRR(\symm_{n-k-1})^{(K)}$ in the variable set $\bar{\xx}$.}
    \end{equation}

    Applying Lemmas~\ref{lem:K-maps} and \ref{lem:a-ses}  gives  a  chain map $f_\bullet$ from the top complex \eqref{eq:top-resolution-1} to the bottom complex \eqref{eq:bottom-resolution-1} whose components
    \begin{align*}
         C_{n-1,k}(T-t_0) &\to C_{n,k}(T-t_0)   &  A_{n-1,T-t_0} &\to A_{n,T-t_0} \\
         f \cdot \ee_K &\mapsto x_{t_0,t_0} \cdot f \cdot \ee_K & 
         f &\mapsto x_{t_0,t_0} \cdot f
    \end{align*}
    are induced by multiplication by $x_{t_0,t_0}$. Consider the mapping cone $\Cone(f_\bullet)$ of $f_\bullet$ with chain groups
    \begin{equation}
    \Cone(f_\bullet)_k = C_{n,k}(T-t_0) \oplus C_{n-1,k-1}(T-t_0) \cong_\F C_{n,k}(T)
    \end{equation}
    where the vector space isomorphism $C_{n,k}(T-t_0) \oplus C_{n-1,k-1}(T-t_0) \cong C_{n,k}(T)$ identifies 
    \begin{equation}
    \label{eq:signed-identification}
    (p \cdot \ee_K, q \cdot \ee_R) \leftrightarrow  -p \cdot \ee_K -  (-1)^k \cdot q \cdot \ee_R \wedge \ee_{t_0}
    \end{equation}
    for all size $k$ subsets $K \subseteq T - t_0$, all size $k-1$ subsets $R \subseteq T - t_0$, all $p \in \RRR(\symm_{n-k})^{(K)}$, and all $q \in \RRR(\symm_{n-k-1})^{(R)}$. The unusual signs in \eqref{eq:signed-identification} will be important momentarily.

    Theorem~\ref{thm:exact-cone} and the short exact sequence \eqref{eq:A-ses} give an exact sequence of $\F$-vector spaces
    \begin{equation}
    \label{eq:proper-cone-resolution}
        (\Cone(f_\bullet) \to A_{n,T} \to 0) = (0 \to C_{n,|T|}(T) \to \cdots \to C_{n,1}(T) \to C_{n,0}(T) \to A_{n,T} \to 0).
    \end{equation}
    Let us examine the differentials of \eqref{eq:proper-cone-resolution}. Suppose first that $k\geq1$.
    Thanks to our strategic choice of signs in \eqref{eq:signed-identification}, for $R = \{i_1 < \cdots < i_{k-1}\} \subseteq T - t_0$, the differential $d_k$ of $\Cone(f_\bullet)$ satisfies 
    \[
    d_k(0,q \cdot \ee_R) = \left(q \cdot x_{t_0,t_0} \cdot \ee_R, - \sum_{s=1}^{k-1} (-1)^{s-1} q \cdot x_{i_s,i_s} \cdot \ee_{R - i_s} \right)
    \]
    which translates to
    \[
        (-1)^k d_k(q \cdot \ee_R \wedge \ee_{t_0}) = - (q \cdot x_{t_0,t_0}) \cdot \ee_R + (-1)^k \sum_{s=1}^{k-1} (-1)^{s-1} (q \cdot x_{i_s,i_s}) \cdot \ee_{R - i_s} \wedge \ee_{t_0}
    \]
    or
    \[
    d_k(q \cdot \ee_R \wedge \ee_{t_0}) = (-1)^{k-1} q \cdot x_{t_0,t_0} \cdot \ee_R + \sum_{s=1}^{k-1} (-1)^{s-1} (p \cdot x_{i_s,i_s}) \cdot \ee_R \wedge \ee_{t_0}.
    \]
    This coincides with the differential $\partial_k: C_{n,k}(T) \to C_{n,k-1}(T)$ in the lemma since $t_0 = \max(T)$. Similarly, for $K = \{i_1 < \cdots < i_k\} \subseteq T - t_0$ one has 
    \[
    d_k(p \cdot \ee_K,0) = \left(
        \sum_{s=1}^{k} (-1)^{s-1} (p \cdot x_{i_s,i_s}) \cdot \ee_{K-i_s},0
    \right)
    \]
    which translates to
    \[
    d_k(-p \cdot \ee_K) = - \sum_{s=1}^k (-1)^{s-1} ( p \cdot x_{i_s,i_s}) \cdot \ee_{K - i_s}.
    \]
    This again agrees with the differential $\partial_k: C_{n,k}(T) \to C_{n,k-1}(T)$ in the lemma. At $k = 0$, the identification $d_0 = \partial_0$ of differentials is immediate from Theorem~\ref{thm:exact-cone} and the proof of the lemma is complete.
\end{proof}

We would like an exact sequence resolving $A_{n,[n]} = \RRR(\symm_n)$. Unfortunately, the proof of the base case in Lemma~\ref{lem:T-less-than} breaks down when $T = [n]$ and the conclusion of Lemma~\ref{lem:T-less-than} fails when $T = [n]$. In order to adjust Lemma~\ref{lem:T-less-than} to apply to the case $T = [n]$, we will need a simple result on the ring $\RRR(\symm_2)$.

\begin{lemma}
    \label{lem:s2}
    Inside the ring $\RRR(\symm_2)$ we have $x_{1,1} = x_{2,2}$.
\end{lemma}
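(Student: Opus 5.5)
The plan is to work directly with the generators of $I_2$ from Definition~\ref{def:I-ideal-definition}, using $\RRR(\symm_2) = S/I_2$ from Theorem~\ref{thm:permutation-matrix}. When $n=2$, the linear generators of $I_2$ are the row sums $x_{1,1}+x_{1,2}$ and $x_{2,1}+x_{2,2}$ and the column sums $x_{1,1}+x_{2,1}$ and $x_{1,2}+x_{2,2}$. No quadratic relations are needed, since the claim is an equality of linear forms modulo the linear part of $I_2$.

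From the first row relation we get $x_{1,2} = -x_{1,1}$ in $\RRR(\symm_2)$. From the second column relation, $x_{2,2} = -x_{1,2}$. Combining the two gives $x_{2,2} = x_{1,1}$.

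There is essentially no obstacle here; the only step worth a sentence is that $\RRR(\symm_2)$ really is $S/I_2$, which Theorem~\ref{thm:permutation-matrix} provides. As a sanity check, the degree-one piece of $\RRR(\symm_2)$ is one-dimensional, since the Hilbert series is $1+q$. All four variables are then scalar multiples of $x_{1,1}$, namely $x_{1,2}=x_{2,1}=-x_{1,1}$ and $x_{2,2}=x_{1,1}$, consistent with the claim.
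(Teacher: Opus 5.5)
Your proposal is correct and is the same argument as the paper's, which derives $x_{1,1} = x_{2,2}$ directly from the row relation $x_{1,1} + x_{1,2} = 0$ and the column relation $x_{1,2} + x_{2,2} = 0$ in $S/I_2$. Your Hilbert series sanity check ($1+q$, so the degree-one piece is spanned by $x_{1,1}$) is also accurate.
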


\begin{proof}
    This follows from the relations $x_{1,1} + x_{1,2} = 0$ and $x_{1,2} + x_{2,2} = 0$.
\end{proof}

The $T = [n]$ version of Lemma~\ref{lem:T-less-than} reads as follows.

\begin{lemma}
    \label{lem:T-equals-n}
    For any $n \geq 0$ we have an exact sequence of $\F$-vector spaces 
    \[
    0 \to C_{n,n}([n]) \xrightarrow{\, \partial_n \,} \cdots \xrightarrow{\,\partial_2\,} C_{n,1}([n]) \xrightarrow{\, \partial_1 \,} C_{n,0}([n]) \xrightarrow{\, \partial_0 \, } A_{n,[n]} \to 0
    \]
    where $\dots$ 
    \begin{itemize}
    \item for $0 < k < n$, the differential $\partial_k: C_{n,k}([n]) \to C_{n,k-1}([n])$ is the degree 1 map defined in Lemma~\ref{lem:T-less-than},
    \item the differential $\partial_0: C_{n,0}([n]) \to A_{n,[n]}$ the degree 0 map defined in Lemma~\ref{lem:T-less-than}, and
    \item  the top differential $\partial_n: C_{n,n}([n]) \to C_{n,n-1}([n])$ is the degree 0 map
    \[
    \partial_n: f \cdot \ee_1 \wedge \cdots \wedge \ee_n \mapsto \sum_{p=1}^n  (-1)^{p-1} f \cdot  \ee_1 \wedge \cdots \wedge \widehat{\ee_p} \wedge \cdots \wedge \ee_n
    \]
    for all $f \in \RRR(\symm_0)^{[n]} \cong \F$.
    \end{itemize}
\end{lemma}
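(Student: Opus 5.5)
The plan is to run the same mapping cone induction as in Lemma~\ref{lem:T-less-than}, with $T = [n]$ and $t_0 = n$, and to handle the boundary behaviour at the top of the complex separately. The cases $n = 0, 1$ are checked by hand. For $n=0$ the sequence is $0 \to \F \to \F \to 0$. For $n=1$ it is $0 \to \F\cdot \ee_1 \to \RRR(\symm_1) = \F \to A_{1,[1]} = 0 \to 0$ with $\partial_1$ the identity, which is exact. The case $n=2$ I would also verify directly, using Lemma~\ref{lem:s2} and the presentations from Theorem~\ref{thm:ses-presentation}. It serves as a sanity check for the degree-$0$ top map.

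For $n \geq 2$, apply Lemma~\ref{lem:a-ses} with $T = [n]$ and $t_0 = n$. This gives
\[
0 \to A_{n-1,[n-1]} \xrightarrow{x_{n,n}} A_{n,[n-1]} \to A_{n,[n]} \to 0.
\]
The middle term has $T = [n-1] \subsetneq [n]$, so Lemma~\ref{lem:T-less-than} resolves it by $C_{n,k}([n-1])$. The left term is $\RRR(\DDD_{n-1})$ in the variables $\overline{\xx}$, and induction on $n$ resolves it by the sequence of the present lemma for $n-1$. In that sequence the top map $C_{n-1,n-1}([n-1]) \to C_{n-1,n-2}([n-1])$ has degree $0$.

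Next build a chain map $f_\bullet$ from the resolution of $A_{n-1,[n-1]}$ to the resolution of $A_{n,[n-1]}$, given componentwise by multiplication by $x_{n,n}$ as in Lemma~\ref{lem:K-maps}. Commutativity is clear at the levels where both differentials are the multiplication maps. At the top level $k = n-1$ the two routes differ:
\begin{itemize}
\item Going up then across, a source element $c\cdot \ee_{[n-1]}$ with $c \in \F$ maps to $c\, x_{n,n}\, \ee_{[n-1]} \in C_{n,n-1}([n-1])$, where $x_{n,n}\in \RRR(\symm_1)^{([n-1])}$ is zero. So the top component of $f$ is $0$.
\item Going across then up, the source maps to $\sum_p (-1)^{p-1} c\, x_{n,n}\, \ee_{[n-1]-p}$. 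Each summand lies in $\RRR(\symm_2)^{([n-1]-p)}$ in the variables indexed by $\{p,n\}$. There $x_{n,n} = x_{p,p}$ by Lemma~\ref{lem:s2}, and $x_{p,p}x_{n,n} = 0$, but $x_{n,n}$ itself is nonzero.
\end{itemize}
So the naive square fails to commute at the top. I expect this mismatch to be the main obstacle. My first fix would be to note that the source sequence is only a resolution, not a sequence of projectives, and to correct $f_{n-1}$ rather than $f_{n-2}$ if needed. A cleaner route is to apply Theorem~\ref{thm:exact-cone} with the honest chain map on levels $\le n-2$. One then checks directly that the resulting cone complex, in which $C_{n,n}([n])$ is identified with the level-$(n-1)$ piece of the source shifted up by one, has differential matching the degree-$0$ formula for $\partial_n$.

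Once the cone is available, identify $\Cone(f_\bullet)_k \cong C_{n,k}([n])$ via the signed rule \eqref{eq:signed-identification} with $t_0 = n = \max[n]$. The computation of the differentials in Lemma~\ref{lem:T-less-than} then carries over verbatim for $0<k<n$. At $k=n$ one checks that the cone differential on $c\,\ee_{[n-1]}\wedge \ee_n$ has two contributions:
\begin{itemize}
\item the source top differential gives the terms $p<n$;
\item the $f$-component, interpreted through Lemma~\ref{lem:s2} as $c\,\ee_{[n-1]}$ in $\RRR(\symm_1)\cong \F$, gives the $p=n$ term.
\end{itemize}
Together these recover $\sum_p (-1)^{p-1} c\, \ee_{[n]-p}$. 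If the direct cone does not commute on the nose, the fallback is to prove exactness of the displayed complex without the cone. First check $\partial\circ\partial = 0$ using Lemma~\ref{lem:s2} at the top two levels. Then compare the dimensions $\sum_k (-1)^k \dim C_{n,k}([n])$ against $|\DDD_n|$ via inclusion–exclusion, and combine this with exactness at the lower levels inherited from Lemma~\ref{lem:T-less-than} for $T = [n-1]$.
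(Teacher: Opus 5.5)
Your architecture is the same as the paper's. You use the short exact sequence of Lemma~\ref{lem:a-ses} with $t_0=n$, induction for $A_{n-1,[n-1]}$, Lemma~\ref{lem:T-less-than} for $A_{n,[n-1]}$, a chain map given by multiplication by $x_{n,n}$, and the signed identification of the cone with $C_{n,k}([n])$. You also correctly see why multiplication by $x_{n,n}$ cannot serve at the top level: $x_{n,n}=0$ in $\RRR(\symm_1)^{([n-1])}$.

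\textbf{Gap: the top component of the chain map is never defined correctly.} You conclude that the top component of $f$ is $0$ and say you would ``correct $f_{n-1}$ if needed'' without saying to what. In your last paragraph you then treat it as $c\,\ee_{[n-1]}\mapsto c\,\ee_{[n-1]}$, which contradicts what you said earlier. The missing step is to define $f_{n-1}\colon C_{n-1,n-1}([n-1])=\F\cdot\ee_{[n-1]}\to C_{n,n-1}([n-1])\cong\F\cdot\ee_{[n-1]}$ as the degree-$0$ identity, and then check the top square directly:
\begin{itemize}
\item Down then across gives the bottom differential of Lemma~\ref{lem:T-less-than}, namely $\partial_{n-1}(\ee_{[n-1]})=\sum_{p=1}^{n-1}(-1)^{p-1}x_{p,p}\,\ee_{[n-1]-p}$.
\item Across then down gives the expression you computed, $\sum_{p=1}^{n-1}(-1)^{p-1}x_{n,n}\,\ee_{[n-1]-p}$.
\item These agree summand by summand by Lemma~\ref{lem:s2}, applied in the copy of $\RRR(\symm_2)$ on rows and columns $\{p,n\}$.
\end{itemize}
So you invoked Lemma~\ref{lem:s2} in the right place but compared it against the wrong other route. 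This degree-$0$ component is also why the cone's top differential $\partial_n$ has degree $0$ while the middle ones have degree $1$. Your abstract ``correct $f_{n-1}$'' idea can be completed. Exactness of the bottom resolution, together with injectivity of its top differential, gives a unique lift. But identifying that lift as the identity is this same Lemma~\ref{lem:s2} computation, and you need it explicitly to match the cone differential $d_n$ with the stated $\partial_n$.

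\textbf{Your alternative remedies would not work.}
\begin{itemize}
\item Theorem~\ref{thm:exact-cone} needs a chain map between full resolutions. Truncating the source at level $n-2$ destroys exactness there. It also removes the term that is supposed to become $C_{n,n}([n])$, so the theorem gives nothing.
\item The fallback also fails. Having $\partial\circ\partial=0$ and $\sum_k(-1)^k\dim C_{n,k}([n])=|\DDD_n|$ only shows that the alternating sum of homology dimensions vanishes, not that each homology group does.
\item Exactness of the $T=[n]$ complex at lower levels is not inherited from the $T=[n-1]$ complex. The groups $C_{n,k}([n])$ carry extra summands indexed by $K\ni n$, and controlling those is exactly what the cone (or, equivalently, a long exact sequence in homology) is for.
\end{itemize}
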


The exact sequence of Lemma~\ref{lem:T-equals-n} may be rewritten as
\begin{equation}
    \label{eq:V-resolution}
     0 \to V_n \xrightarrow{\, \partial_n \,} \cdots \xrightarrow{\,\partial_2\,} V_1 \xrightarrow{\, \partial_1 \,} V_0 \xrightarrow{\, \partial_0 \, } \RRR(\DDD_n) \to 0 
\end{equation}
using the identifications $C_{n,k}([n]) = V_k$ and $A_{n,[n]} = \RRR(\DDD_n)$. Lemma~\ref{lem:T-equals-n} therefore gives the graded character of $\RRR(\DDD_n)$ as a graded alternating sum of the graded characters of the $V_k$.

\begin{proof}
    We induct on $n$, treating two base cases $n=0$ and $n=1$ separately. When $n=0$, the sequence in question is 
     \begin{equation*}
        \begin{tikzpicture}[scale=0.8]
            \node(10) at (0,2) {$0$};
            \node(11) at (4,2) {$C_{0,0}(\varnothing)$};
            \node(12) at (8,2) {$A_{0,\varnothing}$};
            \node(13) at (12,2) {$0$};

            \node(00) at (0,0) {$0$};
            \node(01) at (4,0) {$\F$};
            \node(02) at (8,0) {$\F$};
            \node(03) at (12,0) {$0$};

            \draw[->] (00) -- (01);
            \draw[->] (01) -- node[above] {$\mathrm{id.}$} (02);
            \draw[->] (02) -- (03);
            \draw[->] (10) -- (11);
            \draw[->] (11) -- node[above] {$\partial_0$} (12);
            \draw[->] (12) -- (13);
            \draw[-,double distance=1.5pt] (01) -- (11);
            \draw[-,double distance=1.5pt] (02) -- (12);
        \end{tikzpicture}
    \end{equation*}
    which is exact. When $n=1$, the sequence is 
    \begin{equation*}
        \begin{tikzpicture}[scale=0.8]
            \node(10) at (0,2) {$0$};
            \node(11) at (4,2) {$C_{1,1}(\{1\})$};
            \node(12) at (8,2) {$C_{1,0}(\{1\})$};
            \node(13) at (12,2) {$A_{1,\{1\}}$};
            \node(14) at (16,2) {$0$};

            \node(00) at (0,0) {$0$};
            \node(01) at (4,0) {$\F$};
            \node(02) at (8,0) {$\F$};
            \node(03) at (12,0) {$0$};
            \node(04) at (16,0) {$0$};

            \draw[->] (00) -- (01);
            \draw[->] (01) -- node[above] {$\mathrm{id.}$} (02);
            \draw[->] (02) -- (03);
            \draw[->] (10) -- (11);
            \draw[->] (11) -- node[above] {$\partial_1$} (12);
            \draw[->] (12) -- node[above] {$\partial_0$} (13);
            \draw[->] (13) -- (14);
            \draw[->] (03) -- (04);
            \draw[-,double distance=1.5pt] (01) -- (11);
            \draw[-,double distance=1.5pt] (02) -- (12);
            \draw[-,double distance=1.5pt] (03) -- (13);
        \end{tikzpicture}
    \end{equation*}
    which is also exact. 

    Suppose $n > 1$. Lemma~\ref{lem:a-ses} yields a short exact sequence
    \begin{equation}
    \label{eq:a-top-ses}
        0 \to A_{n-1,[n-1]} \to A_{n,[n-1]} \to A_{n,[n]} \to 0
    \end{equation}
    where the map $A_{n-1,[n-1]} \to A_{n,[n-1]}$ is induced by multiplication by $x_{n,n}$. We inductively have an exact sequence
    \begin{equation}
    \label{eq:top-resolution-2}
    0 \to C_{n-1,n-1}([n-1]) \to \cdots \to C_{n-1,1}([n-1]) \to C_{n-1,0}([n-1]) \to A_{n-1,[n-1]} \to 0
    \end{equation}
    resolving $A_{n-1,[n-1]}$.
    Since $[n-1] \subsetneq [n]$, Lemma~\ref{lem:T-less-than} gives an exact sequence
    \begin{equation}
    \label{eq:bottom-resolution-2}
    0 \to C_{n,n-1}([n-1]) \to \cdots \to C_{n,1}([n-1]) \to C_{n,0}([n-1]) \to A_{n,[n-1]} \to 0
    \end{equation}
    resolving $A_{n,[n-1]}$.

    We claim that the degree 1 maps coming from Lemma~\ref{lem:K-maps}
   \begin{align*}
         C_{n-1,k}([n-1]) &\to C_{n,k}([n-1]) & &(0 \leq k < n-1) \\
         f \cdot \ee_K &\mapsto x_{n,n} \cdot f  \cdot \ee_K & &(K \subseteq [n-1] \text{ and } |K|=k)
    \end{align*}
    induced by multiplication by $x_{n,n}$ together the degree 1 map in lowest homological degree
    \begin{align*}
        A_{n-1,[n-1]} &\to A_{n,[n-1]} \\
        f &\mapsto x_{n,n} \cdot f
    \end{align*}
    and the degree 0 map in highest homological degree
    \begin{align*}
    C_{n-1,n-1}([n-1]) &\to C_{n,n-1}([n-1]) \\
    f \cdot \ee_{[n-1]} &\mapsto f \cdot \ee_{[n-1]}
    \end{align*}
    assemble to give a chain map $f_\bullet$ from the top complex \eqref{eq:top-resolution-2} to the bottom complex \eqref{eq:bottom-resolution-2}. The commutativity condition defining chain maps is immediate except in top homological degree where one has the square of maps
    \begin{equation}
        \label{eq:want-to-commute}
        \begin{tikzpicture}[scale = 0.8]
            \node(01) at (0,2) {$C_{n-1,n-1}([n-1])$};
            \node(11) at (6,2) {$C_{n-1,n-2}([n-1])$};
            \node(m1) at (-3,2) {$\ee_{[n-1]}$};
            \node(m0) at (-3,0) {$\ee_{[n-1]}$};
            \node(p1) at (9,2) {$\ee_{K}$};
            \node(p0) at (9,0) {$x_{n,n} \cdot \ee_{K}$};

            \node(00) at (0,0) {$C_{n,n-1}([n-1])$};
            \node(10) at (6,0) {$C_{n,n-2}([n-1])$};

            \draw[->] (01) -- (11);
            \draw[->] (00) -- (10);
            \draw[->] (01) -- (00);
            \draw[->] (11) -- (10);
            \draw[|->] (m1) -- (m0);
            \draw[|->] (p1) -- (p0);
        \end{tikzpicture}
    \end{equation}
    where $K \subseteq [n-1]$ satisfies $|K| = n-2$. Since $\RRR(\symm_0) = \RRR(\symm_1) = \F$, the square \eqref{eq:want-to-commute} is equivalent to the square
    \begin{equation}
        \label{eq:want-to-commute-2}
        \begin{tikzpicture}[scale = 0.8]
            \node(01) at (0,2) {$\F \cdot \ee_{[n-1]}$};
            \node(11) at (6,2) {$\bigoplus_{i=1}^{n-1} \F \cdot \ee_{[n-1] - i}$};
            \node(m1) at (-2,2) {$\ee_{[n-1]}$};
            \node(m0) at (-2,0) {$\ee_{[n-1]}$};
            \node(p1) at (11,2) {$\ee_{[n-1]-\{i\}}$};
            \node(p0) at (11,0) {$x_{n,n} \cdot \ee_{[n] - \{i,n\}}$};

            \node(a0) at (0,3) {$\ee_{[n-1]}$};
            \node(a1) at (6,3) {$\sum_{i=1}^{n-1}(-1)^i \cdot  \ee_{[n-1]-\{i\}}$};
            \node(b0) at (0,-1) {$\ee_{[n-1]}$};
            \node(b1) at (6,-1) {$\sum_{i=1}^{n-1}(-1)^i \cdot x_{i,i} \cdot  \ee_{[n]-\{i,n\}}$};

            \node(00) at (0,0) {$\F \cdot \ee_{[n-1]}$};
            \node(10) at (6,0) {$\bigoplus_{i=1}^{n-1} \RRR(\symm_2)^{[n] - \{i,n\}} \cdot \ee_{[n]-\{i,n\}}$};

            \draw[->] (01) -- (11);
            \draw[->] (00) -- (10);
            \draw[->] (01) -- (00);
            \draw[->] (11) -- (10);
            \draw[|->] (m1) -- (m0);
            \draw[|->] (p1) -- (p0);
            \draw[|->] (a0) -- (a1);
            \draw[|->] (b0) -- (b1);
        \end{tikzpicture}
    \end{equation}
    so commutativity of \eqref{eq:want-to-commute-2} (and \eqref{eq:want-to-commute}) is equivalent to the equation
    \begin{equation}
    \label{eq:2-syzygy}
        \sum_{i=1}^{n-1} (-1)^i \cdot x_{i,i} \cdot \ee_{[n] -\{i,n\}} = \sum_{i=1}^{n-1} (-1)^i \cdot x_{n,n} \cdot \ee_{[n] - \{i,n\}}
    \end{equation}
    inside the ring 
    \begin{equation}
        C_{n,n-2}([n-1]) = \bigoplus_{i=1}^{n-1} \RRR(\symm_2)^{[n] - \{i,n\}} \cdot \ee_{[n]-\{i,n\}}.
    \end{equation}
    Since $\RRR(\symm_2)^{[n] - \{i,n\}}$ is a copy of $\RRR(\symm_2)$ over the variable matrix $\begin{pmatrix} x_{i,i} & x_{i,n} \\ x_{n,i} & x_{n,n} \end{pmatrix}$,
   Lemma~\ref{lem:s2} implies that $x_{i,i} = x_{n,n}$ in the ring $\RRR(\symm_2)^{[n]-\{i,n\}}$ and \eqref{eq:2-syzygy} follows. We conclude that the square \eqref{eq:want-to-commute} is commutative and that $f_\bullet$ is a chain map.

    The rest of the proof is similar to that of Lemma~\ref{lem:T-less-than}.
    Since $f_\bullet$ is a chain map, the short exact sequence \eqref{eq:a-top-ses} and Theorem~\ref{thm:exact-cone} apply to give a resolution $\Cone(f_\bullet) \to A_{n,[n]} \to 0$.
    The $k^{th}$ chain group of $\Cone(f_\bullet)$ is 
    \begin{equation}
    \label{eq:cone-equal-identification}
       \Cone(f_\bullet)_k =    C_{n,k}([n-1])  \oplus C_{n-1,k-1}([n-1]) \cong_\F  C_{n,k}([n])
    \end{equation}
    for $0 \leq k \leq n$ where $C_{n,n}([n-1]) = C_{n-1,-1}([n-1]) = 0$.
    As in the proof of Lemma~\ref{lem:T-less-than}, the vector space isomorphism
    in \eqref{eq:cone-equal-identification} identifies
    \begin{equation}
    \label{eq:cone-equal-signs}
    (p \cdot \ee_K, q \cdot \ee_R) \leftrightarrow - p \cdot \ee_K - (-1)^k  q \cdot \ee_R \wedge \ee_n
    \end{equation}
    for all $K \subseteq [n-1]$ with $|K|=k$, all $R \subseteq [n-1]$ with $|R| = k-1$, all $p \in \RRR(\symm_{n-k})^{(K)}$, and all $q \in \RRR(\symm_{n-k-1})^{(R)}$.
    Also as in the proof of Lemma~\ref{lem:T-less-than}, the signs in \eqref{eq:cone-equal-signs} are chosen so that the differentials $d_k$ of $\Cone(f_\bullet)$ align with the differentials $\partial_k$ in the statement of the lemma. The proof that $d_k = \partial_k$ under the identification \eqref{eq:cone-equal-signs} is the same as in Lemma~\ref{lem:T-less-than} for $k < n$. When $k = n$, the map $d_n: \Cone(f_\bullet)_n \to \Cone(f_\bullet)_{n-1}$ translates to a map
    \begin{equation}
        d_n: 0 \oplus C_{n-1,n-1}([n-1]) \to C_{n,n-1}([n-1]) \oplus C_{n-1,n-2}([n-1])
    \end{equation}
    given by
    \begin{equation}
        d_n: (0, \ee_{[n-1]}) \mapsto \left(\ee_{[n-1]}, \sum_{i=1}^{n-1} (-1)^i \cdot \ee_{[n-1] - i}  \right).
    \end{equation}
    Under the identification \eqref{eq:cone-equal-signs} this corresponds to the map $d_n: C_{n,n}([n]) \to C_{n,n-1}([n])$ given by 
    \begin{equation}
        d_n: (-1)^n \cdot \ee_{[n-1]} \wedge \ee_n \mapsto - \ee_{[n-1]} - (-1)^n \cdot  \sum_{i=1}^n (-1)^i \cdot  \ee_{[n-1] - i} \wedge \ee_n.
    \end{equation}
    From here it is not difficult to see that $d_n = \partial_n$ as maps $C_{n,n}([n]) \to C_{n,n-1}([n])$.
\end{proof}

\subsection{\texorpdfstring{$\symm_n$}--structure} 
The group $\symm_n$ acts on $\DDD_n$ by conjugation, viz. $w \cdot v := w v w^{-1}$ for $w \in \symm_n$ and $v \in \DDD_n$. Write $\F[\DDD_n]$ for the corresponding permutation $\symm_n$-module.

\begin{proposition}
    \label{prop:ungraded-structure}
    Let $\symm_n$ act on $\DDD_n \subseteq \F^{n \times n}$ by conjugation. For $n > 1$ we have 
    \[
    S/(I_n + (x_{1,1}, \dots, x_{n,n})) = \RRR(\DDD_n)  \cong \FFF[\DDD_n].
    \]
    as ungraded $\symm_n$-modules.
\end{proposition}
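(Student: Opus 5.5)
The proposition has two parts: an equality of rings and an isomorphism of ungraded modules.

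For the equality $S/(I_n + (x_{1,1},\dots,x_{n,n})) = \RRR(\DDD_n)$, I would apply Theorem~\ref{thm:ses-presentation} (2) with the diagonal rook placement $\RRRR = \{(1,1),\dots,(n,n)\}$. Here $\symm_n(\RRRR) = \DDD_n$, and the hypothesis $n \neq 1$ holds since $n > 1$. This gives
\[
\gr \, \II(\DDD_n) = I_n + (x_{1,1},\dots,x_{n,n})
\]
as ideals in $S$. The first equality follows immediately.

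For the isomorphism $\RRR(\DDD_n) \cong \F[\DDD_n]$, I would invoke the Fundamental Theorem of Orbit Harmonics (Theorem~\ref{thm:orbit-harmonics}). This requires that the locus $\DDD_n \subseteq \F^{n\times n}$ be stable under a finite matrix group $G \subseteq GL_{n^2}(\F)$ acting linearly on $\F^{n \times n}$. The natural choice is the image of $\symm_n$ under simultaneous row and column permutation, $A \mapsto P_w A P_w^{-1}$.

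I would check two things about this action.

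First, it preserves $\DDD_n$. Conjugating a permutation matrix of a derangement $v$ by $P_w$ gives the permutation matrix of $wvw^{-1}$, and $wvw^{-1}$ is again fixed-point-free.

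Second, the induced action on coordinate functions matches the action $w \cdot x_{i,j} = x_{w(i),w(j)}$ used on $S$. This is a routine check, possibly up to replacing $w$ by $w^{-1}$. Such a replacement does not change the isomorphism type of the module, since inversion is an anti-automorphism and characters of $\symm_n$ are real, so class functions are preserved.

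With both checks in place, Theorem~\ref{thm:orbit-harmonics} gives $\RRR(\DDD_n) \cong \F[\DDD_n]$ as ungraded $\symm_n$-modules. On the right side, $\symm_n$ acts by permuting the points of the locus, which is exactly the conjugation action $v \mapsto wvw^{-1}$.

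I expect no real obstacle; the only care needed is bookkeeping of conventions. The paper identifies permutation matrices via $w(j) = i$, so the matrix has a $1$ at position $(w(j),j)$. I need to confirm that simultaneous permutation of rows and columns by $w$ carries the matrix of $v$ to the matrix of $wvw^{-1}$, not the matrix of $w^{-1}vw$. Either outcome yields an isomorphic permutation module, because the two actions differ by the automorphism $w \mapsto w^{-1}$ composed with inversion, and both give the same permutation character, namely the number of derangements fixed under conjugation.
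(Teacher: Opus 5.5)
Your proposal is correct and matches the paper's proof: the ideal equality is Theorem~\ref{thm:ses-presentation}~(2) applied to the diagonal rook placement, and the isomorphism $\RRR(\DDD_n) \cong \F[\DDD_n]$ is the Fundamental Theorem of Orbit Harmonics. Your convention check also works out with no inversion needed, because $w \cdot x_{i,j} = x_{w(i),w(j)}$ is exactly the action on functions induced by $A \mapsto P_w A P_w^{-1}$, and this map sends the permutation matrix of $v$ to that of $wvw^{-1}$.
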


\begin{proof}
    The identification $S/(I_n + (x_{1,1}, \dots, x_{n,n}))  = \RRR(\DDD_n)$ is Theorem~\ref{thm:ses-presentation} (2) and the isomorphism $\RRR(\DDD_n) \cong \F[\DDD_n]$ is the standard orbit harmonics isomorphism \eqref{eq:orbit-harmonics-isomorphism}.
\end{proof}

D\'esarm\'enien--Wachs (up to sign twist) introduced \cite{DW} and Reiner--Webb  studied \cite{RW} the (nonobviously genuine) $\symm_n$-character 
\begin{equation}
\chi_n := \sum_{k=0}^n (-1)^{n-k} \cdot \Ind_{\symm_1^k \times \symm_{n-k}}^{\symm_n} \one
\end{equation} 
of degree $|\DDD_n|$.  The character $\chi_n$ is {\bf not} that of $\F[\DDD_n]$ under the conjugation action. For example, we have 
\[
\chi_3 = V^{(2,1)} \quad \quad \text{whereas} \quad \quad \F[\DDD_3] \cong V^{(3)} \oplus V^{(1,1,1)}.
\]
The authors do not know an interesting graded refinement of $\chi_n$. The above example shows such a refinement cannot be a graded quotient of the coordinate ring $\F[W]$ of some $\symm_n$-module $W$.

We are ready to derive  the graded $\symm_n$-character of $\RRR(\DDD_n)$. Recall that $*$ is the Kronecker product on the ring $\Lambda$ of symmetric functions.

\begin{theorem}
\label{thm:graded-module}
The graded Frobenius image of $\RRR(\DDD_n)$ is given by
    \[
    \grFrob(\RRR(\DDD_n);q) = (-1)^n \cdot q^{n-1} \cdot e_n + \sum_{k=0}^{n-1} (-q)^k \cdot e_k \cdot \left[ \sum_{\lambda \vdash n-k} q^{n-k-\lambda_1} \cdot (s_\lambda * s_\lambda) \right]
    \]
    where $*$ is the Kronecker product.
\end{theorem}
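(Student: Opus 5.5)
The plan is to take the graded Euler characteristic of the exact sequence \eqref{eq:V-resolution} from Lemma~\ref{lem:T-equals-n}. Every map in it is $\symm_n$-equivariant, because the sign conventions on $\ee_K$ make $\partial_k(f\cdot \ee_K)=\sum_s(-1)^{s-1}x_{i_s,i_s} f \,\ee_{K-i_s}$ the usual Koszul-type contraction, which commutes with the diagonal action. The maps are also homogeneous of known degrees: $\partial_0$ has degree $0$, $\partial_k$ has degree $1$ for $0<k<n$, and $\partial_n$ has degree $0$. To make each map degree-preserving, I shift gradings: $V_k$ is replaced by $V_k$ shifted up by $k$ for $0\le k\le n-1$, and $V_n$ is shifted up by $n-1$ (not $n$), because the top differential has degree $0$. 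Exactness in each graded piece then gives an alternating sum:
\[
\grFrob(\RRR(\DDD_n);q)=\sum_{k=0}^{n-1}(-1)^k q^k\,\grFrob(V_k;q)+(-1)^n q^{n-1}\grFrob(V_n;q).
\]

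Next I compute each $\grFrob(V_k;q)$ using Lemma~\ref{lem:v-induction}. We have $V_k\cong \RRR(\symm_{n-k})\circ \sign_{\symm_k}$, where $\RRR(\symm_{n-k})$ is restricted to the diagonal copy of $\symm_{n-k}$. The induction product rule then gives $\grFrob(V_k;q)=e_k\cdot \grFrob(\mathrm{Res}_\Delta \RRR(\symm_{n-k});q)$.

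By Theorem~\ref{thm:permutation-matrix}, the degree $d$ piece of $\RRR(\symm_{n-k})$ is $\bigoplus_{\lambda\vdash n-k,\ \lambda_1=n-k-d}V^\lambda\otimes V^\lambda$ as a $\symm_{n-k}\times\symm_{n-k}$-module. Restricting to the diagonal turns the outer tensor product into the inner tensor product, whose Frobenius image is $s_\lambda * s_\lambda$. Hence
\[
\grFrob(\mathrm{Res}_\Delta\RRR(\symm_{n-k});q)=\sum_{\lambda\vdash n-k}q^{n-k-\lambda_1}\,(s_\lambda*s_\lambda).
\]
For $k=n$, $V_n$ is the sign representation concentrated in degree $0$, because $\RRR(\symm_0)=\F$. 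Its contribution is therefore $(-1)^nq^{n-1}e_n$. Substituting into the alternating sum gives exactly the claimed formula.

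The main points to check are twofold. First, the $\symm_n$-equivariance of the differentials, in particular that $\partial_0$ and the identification in Lemma~\ref{lem:v-induction} respect the twist by $\sign_{\symm_k}$. Second, the bookkeeping of degree shifts, where the anomalous degree-$0$ top map $\partial_n$ is the one place the pattern breaks. The small cases $n=0,1$ also need a separate check against \eqref{eq:off-by-one}: for $n=1$ the formula gives $-e_1+e_1=0$, as required.
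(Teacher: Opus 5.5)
Your proposal is correct and follows the paper's proof essentially step for step. You take the graded Euler characteristic of the resolution in Lemma~\ref{lem:T-equals-n}, shifting $V_k$ by $q^k$ for $k<n$ and $V_n$ by $q^{n-1}$ because $\partial_n$ has degree $0$, and then evaluate $\grFrob(V_k;q)$ via Lemma~\ref{lem:v-induction} and Theorem~\ref{thm:permutation-matrix}.

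One remark on the $n=0$ check you flagged but did not carry out. There the formula gives $q^{-1}e_0=q^{-1}$ rather than $\grFrob(\RRR(\DDD_0);q)=1$, so the statement, and the degree bookkeeping in both your argument and the paper's, implicitly requires $n\geq 1$.
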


\begin{proof}
    Lemma~\ref{lem:T-equals-n} implies that we have an exact sequence of $\F$-vector spaces 
    \begin{equation}
        \label{eq:grfrob-1}
        0 \rightarrow V_n \xrightarrow{\, \partial_n \, } \cdots \xrightarrow{\, \partial_2 \, } V_1 \xrightarrow{\, \partial_1 \, } V_0 \xrightarrow{\, \partial_0 \, } \RRR(\DDD_n) \to 0
    \end{equation}
    where $\partial_n, \partial_0$ are homogeneous of degree 0 and $\partial_k$ is homogeneous of degree $1$ for $0 < k < n$. The explicit formula in Lemma~\ref{lem:T-equals-n} shows that $\partial_k$ is an $\symm_n$-module homomorphism for all $0 \leq k \leq n$. It follows that 
    \begin{equation}
        \grFrob(\RRR(\DDD_n);q) = (-1)^n \cdot q^{n-1} \cdot \grFrob(V_n;q) + \sum_{k=0}^{n-1} (-q)^k \cdot \grFrob(V_k;q).
    \end{equation}
    For $0 \leq k \leq n$ the graded Frobenius image of $V_k$ is given by
    \begin{align}
        \grFrob(V_k;q) &= e_k \cdot \grFrob(\RRR(\symm_{n-k});q) \\
        &= e_k \cdot \sum_{\lambda \vdash n-k} q^{n-k-\lambda_1} \cdot (s_\lambda * s_\lambda)
    \end{align}
    where the first equality applies Lemma~\ref{lem:v-induction} and the second equality follows from Theorem~\ref{thm:permutation-matrix}. Plugging this formula for $\grFrob(V_k;q)$ into \eqref{eq:grfrob-1} yields the result.
\end{proof}

Reiner and Webb gave \cite{RW} a topological interpretation of $\chi_n$. In particular, let $P_n$ be the poset on words $a_1 \dots a_k$ over the alphabet $[n]$ without repetition with partial order 
\begin{equation*}
    b_1 \dots b_r \leq a_1 \dots a_k \quad \Leftrightarrow \quad 
    \text{there exist $1 \leq i_1 < \cdots < i_r \leq k$ with $b_p = a_{i_p}$ for $p = 1,\dots,r.$}
\end{equation*}
Then $P_n$ is the face poset of a regular simplicial complex $K_n$ which carries an $\symm_n$-action and whose reduced homology is concentrated in top degree. Reiner and Webb showed \cite{RW} that $\chi_n$ is the character of $\widetilde{H}_{\mathrm{top}}(K_n,\CC)$. It may be interesting to give a topological interpretation of the resolution
\[
0 \to V_n \to V_{n-1} \to \cdots \to V_0 \to \RRR(\DDD_n) \to 0
\]
of $\RRR(\DDD_n)$. 

The authors do not know a manifestly Schur-positive formula for $\grFrob(\RRR(\DDD_n);q)$. Since the Schur-expansion of Kronecker squares $s_\lambda * s_\lambda$ is far from understood, it would probably be difficult to derive such a formula from Theorem~\ref{thm:graded-module} directly. The first few examples of $\grFrob(\RRR(\DDD_n);q)$ are as follows.

\begin{center}
\renewcommand{\arraystretch}{1.2}
\begin{tabular}{c|l}
$n$ & $\grFrob(\RRR(\DDD_n);q)$ \\ \hline
1 & $0$ \\

2 & $s_{2}$ \\

3 & $q\,s_{1,1,1}+s_{3}$ \\

4 &
$q\,s_{2,1,1}
 +(q^{2}+q)\,s_{2,2}
 +(q^{2}+1)\,s_{4}$ \\

5 &
$\begin{aligned}
&q^{2}s_{1,1,1,1,1}
 +q^{2}s_{2,1,1,1}
 +q^{2}s_{2,2,1}
 +(q^{3}+q^{2}+q)s_{3,1,1} \\
&\qquad
 +(q^{2}+q)s_{3,2}
 +q^{2}s_{4,1}
 +(q^{2}+1)s_{5}
\end{aligned}$
\end{tabular}
\end{center}

\section{Conclusion}
\label{sec:Conclusion}

Although we have calculated the Hilbert series and graded module structure of $\RRR(\DDD_n)$, the authors do not know an explicit basis of this vector space. Unlike the case of $\RRR(\symm_n)$ in \cite{RhoadesViennot}, the ring $\RRR(\DDD_n)$ does not seem to have nice standard monomial theory. We give a conjectural basis of $\RRR(\DDD_n)$ involving the {\em patience sorting} algorithm.

Let $w= [w(1), \dots, w(n)] \in \symm_n$. We build a sequence of piles by reading the letters of $w$ from left to right. Initially there are no piles. Suppose that at step $i$ we read the letter $w(i)=a$. If there is a pile whose current top element $c$ satisfies $c>a$, then place $a$ on top of the leftmost such pile. In this case we say that $a$ \emph{bumps} $c$, and we call the pair $(a,c)$ a \emph{bump}. 
If no such pile exists, then we start a new pile consisting only of $a$ on the right. We write $\Bump(w)$ for the set of bumps of $w$, set $\bump(w) := |\Bump(w)|$, and let  $\pile(w)$ be the number of piles at the end of the algorithm. Hammersley proved \cite{Hammersley} that 
\begin{equation}
    \pile(w) = \lis(w).
\end{equation}
For example, if $w = [4,6,2,5,7,1,3] \in \symm_7$ the patience sorting algorithm proceeds as
\begin{multline*}
\varnothing \quad \leadsto \quad 4 \quad \leadsto \quad 4 \mid 6 \quad \leadsto \quad  2 \, 4 \mid 6 \quad \leadsto \quad 2 \, 4 \mid 5 \, 6 \quad \\ 
\leadsto \quad 2 \, 4 \mid 5 \, 6 \mid 7 \quad \leadsto \quad 1 \, 2 \, 4 \mid 5 \, 6 \mid 7 \quad \leadsto \quad 1 \, 2 \,4 \mid 3 \, 5 \, 6 \mid 7
\end{multline*}
In this case we have $\Bump(w) = \{ (4,2), (6,5), (2,1), (5,3) \}$ so that $\bump(w) = 4$. We have $\pile(w) = 3$ and indeed $\lis(w) = 3$.

Let $\Psi: \symm_n \xrightarrow{\,\,\sim\,\,} \symm_n$ be the Foata transformation. For any $w \in \symm_n$, define the {\em bump monomial} $\bbbb(w) \in S$ by
\begin{equation}
    \bbbb(w) := \prod_{(a,c) \in \Bump(\Psi(w))} x_{w(a),c}.
\end{equation}
Since 
\begin{equation}\bump(w) + \pile(w) = n \quad \quad \text{for all } w \in \symm_n,
\end{equation} the monomial $\bbbb(w)$ has degree $n-\lis(\Psi(w))$. The following conjecture is therefore consistent with Theorem~\ref{thm:hilbert}.

\begin{conjecture}
    \label{conj:basis}
    The set $\{ \bbbb(w) \,:\, w \in \DDD_n \}$ of bump monomials indexed by derangements descends to a basis of $\RRR(\DDD_n)$.
\end{conjecture}

\section*{Acknowledgements}

The authors thank Vic Reiner for asking about the structure of $\RRR(\DDD_n)$. B. Rhoades was partially supported by NSF Grant DMS-2246846. B. Rhoades got the idea of using mapping cones after long conversations with ChatGPT. The authors used ChatGPT for assistance in revising the paper. The authors wrote the entire paper themselves and take full responsibility for its contents.


\begin{thebibliography}{99}



\bibitem{BDJ} J. Baik, P. Deift, and K. Johansson. On the Distribution of the Length of the Longest Increasing Subsequence of Random Permutations,
{\em J. Amer. Math. Soc.} {\bf 12 (4)} (1999), 1119--1178.


\bibitem{DW} J. D\'esarm\'enien and M. Wachs. Descentes des d\'erangements et mots circulaires. Sem. Lotharing. Combin. {\bf 19} (1988), 13--21.


\bibitem{Foata} D. Foata. On the Netto inversion number of a sequence. {\em Proc. Amer. Math. Soc.}, {\bf 19} (1968), 236--240.

\bibitem{GP} A. M. Garsia and C. Procesi. On certain graded $S_n$-modules and the $q$-Kostka polynomials. {\em Adv. Math.}, {\bf 94 (1)} (1992), 82--138.


\bibitem{Griffin} S. Griffin. Ordered set partitions, Garsia-Procesi modules, and rank varieties. {\em Trans. Amer. Math. Soc.}, {\bf 374 (4)} (2021), 2609--2660.



\bibitem{HRS} J. Haglund, B. Rhoades, and M. Shimozono. Ordered set partitions, generalized coinvariant algebras, and the Delta Conjecture. {\em Adv. Math.}, {\bf 329} (2018), 851--915.

\bibitem{Hammersley} J. Hammersley. A few seedlings of research. {\em Proc. Sixth Berkeley Symp. Math. Statist. and Probability.} Vol. 1. University of California Press. 345--394.


\bibitem{Kostant} B. Kostant. Lie group representations on polynomial rings. {\em Amer. J. Math.}, {\bf 85} (1963), 327--404.


\bibitem{Liu} M. J. Liu. Viennot shadows and graded module structure in colored permutation groups. {\em Comb. Theory}, {\bf 5 (2)} (2025), \#7.


\bibitem{LMRZ} J. Liu, Y. Ma, B. Rhoades, and H. Zhu. Involution matrix loci and orbit harmonics. {\em Math. Z.}, Vol. 310, Article Number 23 (2025).


\bibitem{LZ} J. Liu and H. Zhu. An extension of Viennot's shadow to rook placements via orbit harmonics. Preprint, 2025. {\tt arXiv:2510.23735}.

\bibitem{OR} J. Oh and B. Rhoades. Zigzags, contingency tables, and quotient rings. {\em J. London Math. Soc.}, Vol. 112, Issue 3 (2025),  e70344.



\bibitem{RRT} M. Reineke, B. Rhoades, and V. Tewari. Zonotopal algebras, orbit harmonics, and Donaldson-Thomas invariants of symmetric quivers. {\em Int. Math. Res. Notices}, Vol. 2023, No. 23, 20169--20210.


\bibitem{RR} V. Reiner and B. Rhoades. Harmonics and graded Ehrhart theory. To appear, {\em J. Comb. Algebra}, 2026. {\tt arXiv:2407.06511}.


\bibitem{RW} V. Reiner and P. Webb. The combinatorics of the bar resolution in group homology. {\em J. Pure Appl. Alg.}, {\bf 190} (2004), 291--327.

\bibitem{RhoadesViennot} B. Rhoades. Increasing subsequences, matrix loci, and Viennot shadows. {\em Forum Math. Sigma}, (2024).  Vol. 12:e97, 1-23.



\bibitem{Sagan} B. Sagan. {\em The Symmetric Group: Representations, Combinatorial Algorithms, and Symmetric Functions}, 2nd ed., Graduate Texts in Mathematics, Vol. 203, Springer, New York, 2001.


\bibitem{Schensted} C. Schensted. Longest Increasing and Decreasing Subsequences, {\em Canad. J. Math.} {\bf 13} (1961), 179--191.

\bibitem{Viennot} G. Viennot. Une forme g\'eom\'etrique de la correspondance de Robinson–Schensted, in Combinatoire et Repr\'esentation du Groupe Sym\'etrique, Lecture Notes in Mathematics 579, Springer, 1977.


\bibitem{Weibel} C. Weibel. {\em An Introduction to Homological Algebra}, Cambridge Studies in Advanced Mathematics 38, Cambridge University Press, 1994.


\bibitem{Zhu} H. Zhu. Rook placements and orbit harmonics. Preprint, 2025. {\tt arXiv:2510.25106}.




  
 \end{thebibliography}
\end{document}